\newtheorem{thm}{Theorem}
\newtheorem{prop}[thm]{Proposition}
\newtheorem{cor}[thm]{Corollary}
\newtheorem{lemma}[thm]{Lemma}
\theoremstyle{definition}
\newtheorem{example}[thm]{Example}
\theoremstyle{remark}
\newtheorem*{remark}{Remark}
\newtheorem*{ack}{Acknowledgement}
\newcommand{\N}{\mathbb{N}}
\newcommand{\R}{\mathbb{R}}
\newcommand{\C}{\mathbb{C}}
\newcommand{\bigoh}{\mathcal{O}}
\newcommand{\RE}{\operatorname{Re}}
\newcommand{\IM}{\operatorname{Im}}
\newcommand{\eps}{\varepsilon}
\newcommand{\Cinfty}{\ensuremath{C^{\infty}}}
\newcommand{\Ccinfty}{\ensuremath{C_c^{\infty}}}
\newcommand{\Dprime}{\ensuremath{C^{-\infty}}}
\newcommand{\setof}{\mathrel{;}}
\newcommand{\supp}{\operatorname{supp}}
\newcommand{\WF}{\operatorname{WF}}
\newcommand{\jap}[1]{\langle #1 \rangle}
\newcommand{\restrict}[1]{|_{#1}}
\newcommand{\Restrict}[1]{\big|_{#1}}
\newcommand{\diag}{\operatorname{diag}}
\newcommand{\Hom}{\operatorname{Hom}}
\newcommand{\End}{\operatorname{End}}
\newcommand{\Kernel}{\operatorname{ker}}
\newcommand{\divergence}{\operatorname{div}}
\newcommand{\Id}{\operatorname{Id}}
\newcommand{\dist}{\operatorname{dist}}
\newcommand{\inj}{\operatorname{inj}}
\newcommand{\tofrom}[2]{[#1\shortleftarrow #2]} 
\newcommand{\transport}{\tau} 
\newcommand{\CT}{\C T}
\newcommand{\horderiv}{\mspace{-1mu}\sideset{^h}{}{\mathop{\nabla}}\mspace{-1mu}}
\newcommand{\verderiv}{\mspace{-1mu}\sideset{^v}{}{\mathop{\nabla}}\mspace{-1mu}}
\newcommand{\Horderiv}[1]{\sideset{^h}{#1}{\mathop{\nabla}}}
\newcommand{\Verderiv}[1]{\sideset{^v}{#1}{\mathop{\nabla}}}
\newcommand{\intd}{\mspace{0.5mu}\operatorname{d}\mspace{-2.5mu}}
\newcommand{\dvol}{\intd V}
\newcommand{\xih}{\hat{\xi}}
\newcommand{\h}{h}  
\newcommand{\hdr}{D_{r/\h}}
\newcommand{\trace}{\operatorname{tr}}
\newcommand{\Lie}{\ensuremath{\mathcal{L}}}
\newcommand{\Def}{\operatorname{Def}}
\newcommand{\EllRegion}{\mathcal{E}}
\newcommand{\Op}{\operatorname{Op}}
\newcommand{\spec}{\operatorname{spec}}
\newcommand{\Sym}[1]{S^{#1}}
\newcommand{\Sykm}[2]{S^{#2,#1}}
\newcommand{\Sytm}[1]{S^{#1}_{\operatorname{tang}}}
\newcommand{\Sytkm}[2]{S^{#2,#1}_{\operatorname{tang}}}
\newcommand{\Sypm}[1]{S^{#1}_{\operatorname{pois}}}
\newcommand{\Sypkm}[2]{S^{#2,#1}_{\operatorname{pois}}}
\newcommand{\Psm}[1]{\Psi^{#1}}
\newcommand{\Psmphg}[1]{\Psi^{#1}_{\operatorname{phg}}}
\newcommand{\Pskm}[2]{\Psi^{#2,#1}}
\newcommand{\Pstkm}[2]{\Psi^{#2,#1}_{\operatorname{tang}}}
\newcommand{\halfdens}{\Omega^{1/2}}
\newcommand{\subprinc}[1]{#1_{\operatorname{sub}}}
\newcommand{\inu}{\iota_\nu}
\newcommand{\enu}{\epsilon_\nu}
\newcommand{\ueber}{\shortrightarrow}
\newcommand{\psdiff}{pseudo-differential}
\newcommand{\Psdiff}{Pseudo-differential}
\begin{document}

\title[Surface Quasimodes]{Rayleigh-Type Surface Quasimodes \\ in General Linear Elasticity}
\author[S. Hansen]{S\"onke Hansen}
\address{Institut f\"ur Mathematik\\ Universit\"at Paderborn\\ 33095 Paderborn\\ Germany}
\email{soenke@math.upb.de}
\urladdr{http://www.math.upb.de/~soenke/}
\date{\today\ (revised)}
\subjclass[2000]{Primary: 
35Q72; 
Secondary:
74J15, 
35P20, 
35S05} 
\keywords{Rayleigh surface waves, elastodynamics, anisotropy, quasimodes, microlocal analysis}

\begin{abstract}
Rayleigh-type surface waves correspond to the characteristic variety,
in the elliptic boundary region, of the displacement-to-traction map.
In this paper, surface quasimodes are constructed
for the reduced elastic wave equation, anisotropic in general,
with traction-free boundary.
Assuming a global variant of a condition of Barnett and Lothe,
the construction is reduced to an eigenvalue problem for
a selfadjoint scalar first order \psdiff\ operator on the boundary.
The principal and the subprincipal symbol of this operator are computed.
The formula for the subprincipal symbol seems to be new even in the isotropic case.
\end{abstract}

\maketitle

\section{Introduction}
Rayleigh~\cite{rayleigh:87:surface} discovered the existence of surface waves
which propagate along a traction-free flat boundary of an isotropic elastic body
and which decay exponentially into the interior.
The propagation speed of the surface wave is strictly less than that of body waves.
Barnett and Lothe~\cite{lothe/barnett:76:exist-surface-wave}
showed that Rayleigh-type surface waves can also exist at flat boundaries
of anisotropic elastic media.

The goal of this paper is to construct, for elastic media which are not
necessarily isotropic, Rayleigh-type surface quasimodes which 
are asymptotic to eigenvalues or resonances.
We use a geometric version of semiclassical microlocal analysis.

The  Rayleigh wave phemomenon of isotropic elastodynamics was explained
by Taylor~\cite{taylor:79:rayleigh-waves} as propagation of singularities,
over the elliptic boundary region, for the
Neumann (displacement-to-traction) operator.
Nakamura~\cite{nakamura:91:rayleigh-pulses} generalized this to anisotropic media,
using the theory of Barnett and Lothe.
Assuming isotropy of the elastic medium,
Cardoso-Popov~\cite{CardosoPopov92RayleighQM} and Stefanov~\cite{Stefanov00LowerBd}
constructed Rayleigh quasimodes.

Let $(M,g)$ be an oriented Riemannian manifold with 
non-empty compact smooth boundary $X$.
The (infinitesimal) displacement of an elastic medium
occupying $M$ is a vector field $u$ on $M$.
The Lie derivative of the metric tensor is a symmetric tensor field,
$\Def u=\Lie_u g/2$,
called the deformation (strain) tensor caused by the displacement $u$.
The elastic properties are defined by the elasticity (stiffness) tensor.
This is a real fourth order tensor field $C\in\Cinfty(M;\End(T^{0,2}M))$, $e\mapsto Ce$,
which maps into symmetric tensors and vanishes on antisymmetric tensors.
We assume positive definiteness of $C$, i.e.,
$(e|f)_C=(Ce|f)$ defines an inner product on the space of symmetric tensors $e$ and $f$.
Here $(\cdot|\cdot)$ denotes the inner product on tensors induced from $g$.
This assumption is often called the strong convexity condition.
If coordinates are given, then the components of $C$ satisfy
symmetries, $C^{ijk\ell}= C^{jik\ell}= C^{k\ell ij}$,
and $C^{ijk\ell}e_{ij}e_{k\ell}>0$ if $e_{ij}$ is a nonzero symmetric tensor.
(We use the summation convention.)
Denote the Riemannian volume elements on $M$ and $X$ by $\dvol_M$ and $\dvol_X$, respectively.
The elasticity operator $L$ and the traction $T$ are defined,
on compactly supported vector fields, by
\begin{equation}
\label{def-elast-traction}
\int_M (\Def u|\Def v)_C\dvol_M = \int_M (L u|v) \dvol_M + \int_X (Tu|v)\dvol_X.
\end{equation}
A positive mass density $\rho\in\Cinfty(M)$ and the elasticity
tensor $C$ define the material properties of the elastic medium.
If the surface $X$ is traction-free, then vibrations of the medium 
are solutions of the following eigenvalue problem:
$Lu=\lambda^{2}\rho u$ in $M$, $Tu=0$ at $X$.
See \cite{marsden/hughes:83:elasticity} for linear elasticity
in the language of Riemannian geometry.

The principal symbol of $L$, and of the $h$-differential operator $\h^2 L$,
equals the acoustic tensor, $c(\xi)=c(\xi,\xi)\in\End(\CT_x M)$, $\xi\in T_x^* M$;
see \eqref{symb-L}.
Here the associated acoustical tensor $c(\xi,\eta)\in\End(\CT_x M)$,
$\xi,\eta\in T_x^*M$, is defined as follows:
\begin{equation}
\label{C-and-c}
\big(c(\xi,\eta)v|w\big) =\big(v\otimes \eta|w\otimes\xi\big)_C.
\end{equation}
(Using $g$, we identify vectors with covectors.)
The $ik$-th covariant component of $c(\xi,\eta)$ equals $C^{ijk\ell}\xi_j\eta_\ell$.

The existence of Rayleigh waves depends on the characteristic variety, $\Sigma$,
of the surface impedance tensor, $z$.
To define $z$,
we first recall the definition of the elliptic boundary region, $\EllRegion\subset T^*X$.
Let $\nu$ denote the unit exterior conormal field of the boundary $X$.
Identify $T^*X=\nu^\perp\subset T_X^* M$.
By definition,
$\xi\in\EllRegion$ iff $c(\xi+s\nu)-\rho\Id$ is positive definite for real $s$.
From the factorization theory of selfadjoint matrix polynomials one gets
$q(\xi)\in\End(\CT_x M)$, $\xi\in\EllRegion\cap T_x^* X$, such that
\begin{equation}
\label{spectral-factorization}
c(\xi+s\nu)-\rho\Id =\big(s\Id-q^*(\xi)\big) c(\nu) \big(s\Id-q(\xi)\big),
\end{equation}
$s\in\C$.
Moreover, the spectrum of $q(\xi)$ lies in the lower half-plane, $\spec q(\xi)\subset\C_-$,
and these properties determine $q(\xi)$ uniquely.
The surface impedance tensor $z$ is defined as follows:
\begin{equation}
\label{def-surfimped-z}
z(\xi)= i c(\nu) q(\xi) +i c(\nu,\xi),\quad \xi\in\EllRegion.
\end{equation}
The significance of $z$ results from the fact, proved in Lemma~\ref{Z-op-mainsymb},
that $z$ is the principal symbol of a parametrix 
of the displacement-to-traction operator.
In physics, the meaning of $z$ is that it relates the amplitudes of displacements
to the amplitudes of tractions (forces) needed to sustain these.

The surface impedance tensor is Hermitian, and positive definite for large $|\xi|$,
\cite[Theorem~6]{lothe/barnett:85:surface-wave-impedance-method}.
If $\dim M=3$, then
\begin{gather}
\tag{U} \label{hypo-atmost}
\text{$z(\xi)$, $\xi\in\EllRegion$, has at most one non-positive eigenvalue.}
\end{gather}
This property expresses the uniqueness of Rayleigh-type surface waves,
\cite[Theorem~8]{lothe/barnett:85:surface-wave-impedance-method}.
In case $\dim X\neq 3$, we shall assume \eqref{hypo-atmost} as a hypothesis.
The characteristic variety of $z$,
\[
\Sigma=\{\xi\in\EllRegion\setof \det z(\xi)=0\},
\]
is a smooth hypersurface, transversal to the radial directions of the fibers of $T^*X$.
Compare \cite[Theorem~7]{lothe/barnett:85:surface-wave-impedance-method}.
Rayleigh waves exist only if $\Sigma$ is not empty.
We shall make the stronger assumption that $\Sigma$ intersects every radial line:
\begin{gather}
\tag{E1} \label{hypo-intersect}
\Sigma\cap \R_+\xi\neq\emptyset\;
\text{ if }\; \xi\in T^*X\setminus 0.
\end{gather}
Compare \cite[Theorem~12]{lothe/barnett:85:surface-wave-impedance-method},
\cite[Theorem 2.2]{nakamura:91:rayleigh-pulses}, \cite[(ERW)]{KawaNaka00Poles}.
Assuming \eqref{hypo-atmost} and \eqref{hypo-intersect},
there exists a unique $p\in\Cinfty(T^*X\setminus 0)$, $p>0$,
homogeneous of degree $1$, such that
\begin{equation}
\label{Sigma-eq-pinv1}
\Sigma=p^{-1}(1).
\end{equation}
See Proposition~\ref{prop-radial-line-and-charvar}.
Furthermore, the kernel of $z$ defines a line bundle,
$\Kernel z\ueber\Sigma$, over the compact base $\Sigma$.
We shall require that its first Chern class vanishes:
\begin{gather}
\tag{E2} \label{hypo-trivial}
\Kernel z\ueber\Sigma \text{ is a trivial line bundle.}
\end{gather}
In particular, the bundle is assumed to possess a unit section.
Property \eqref{hypo-trivial} is stable with respect to homotopies in
the material properties; see Corollary~\ref{cor-homotopy}.
In the case of isotropic elasticity with positive Lam\'e parameters,
\eqref{hypo-atmost}, \eqref{hypo-intersect}, and \eqref{hypo-trivial} hold.
Moreover,
\[
\Sigma=\{c_r|\xi|=1\}\subset\EllRegion=\{c_s|\xi|>1\}, \quad p(\xi)=c_r|\xi|.
\]
Here $c_r$ is the propagation speed of the Rayleigh surface wave which
is strictly less than the speeds of the body waves, $0<c_r<c_s<c_p$.
See Example~\ref{exa-iso-1}.

Next we state the central result of this paper:
The traction-free surface eigenvalue problem can be intertwined
with a selfadjoint eigenvalue problem on the boundary.
We employ a semiclassical \psdiff\ calculus, with
distributions and operators depending on a small parameter, $0<\h\leq 1$.
We write $A_\h\equiv B_\h$ iff the Schwartz kernel of $A_\h -B_\h$
belongs to $\Cinfty$ with seminorms satisfying $\bigoh_{\Cinfty}(\h^\infty)$.

\begin{thm}
\label{thm-reduct}
Assume $\dim M=3$, or \eqref{hypo-atmost}.
Assume \eqref{hypo-intersect}, \eqref{hypo-trivial}.
Given a unit section $v$ of $\Kernel z\ueber \Sigma$, 
there exists a selfadjoint, elliptic operator
$P\in\Psm{1}(X;\halfdens)$, independent of $\h$,
and operators,
\begin{gather*}
B_\h:L^2(X;\CT_X M)\to L^2(M;\CT M),
\quad \|B_\h\|=\bigoh(\h^{1/2}), \\
J_\h,\tilde{J}_\h\in\Pskm{0}{0}(X;\halfdens,\CT_X M),
\quad \text{$J_\h^* J_\h$ elliptic at $\Sigma$,}
\end{gather*}
such that
\begin{equation*}
\big(\h^2 L-\rho\big) B_\h\equiv 0, \quad
TB_\h J_\h \equiv \tilde{J}_\h(P-\h^{-1}),
\end{equation*}
and $B_\h\restrict{X}=\Id$ in a neighbourhood of $\Sigma$.
The principal symbol of $P$ equals $p$ of \eqref{Sigma-eq-pinv1}.
Furthermore, there is a formula, \eqref{subprinc-P}, for the
subprincipal symbol $\subprinc{p}$ of $P$.
If $v$ is changed to another unit section, $e^{i\varphi}v$, then
the subprincipal symbol changes to $\subprinc{p} + \{p,\varphi\}$,
where $\{p,\varphi\}$ denotes the Poisson bracket.
\end{thm}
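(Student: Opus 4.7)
The plan is to construct a semiclassical Poisson parametrix $B_\h$ for the interior equation, identify the principal symbol of $TB_\h$ with the surface impedance $z$ via Lemma~\ref{Z-op-mainsymb}, and then scalarize the resulting matrix-valued boundary operator by exploiting the unit section $v$ of $\Kernel z\ueber\Sigma$.

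In a collar $[0,\eps)_s\times X$ with $\nu=\intd s$, the principal symbol of $\h^2 L-\rho$ factors by \eqref{spectral-factorization}: the roots in $s$ are $q(\xi)$ in $\C_-$ and $q^*(\xi)$ in $\C_+$. Quantizing $q$ as a tangential $h$-\psdiff\ operator $Q_\h$ and solving $(\h D_s-Q_\h)U_\h\equiv 0$ with $U_\h\restrict{s=0}\equiv\Id$ microlocally in $\EllRegion$ yields a Poisson kernel decaying exponentially in $s/\h$. Multiplying by an $s$-cutoff and by a tangential $h$-\psdiff\ operator which equals $\Id$ near $\Sigma$ and is supported in $\EllRegion$ produces $B_\h$ satisfying $(\h^2 L-\rho)B_\h\equiv 0$ and $B_\h\restrict{X}=\Id$ near $\Sigma$; the bound $\|B_\h\|=\bigoh(\h^{1/2})$ reflects the normal $L^2$-norm of the exponentially decaying profile.

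Assumption \eqref{hypo-trivial} allows extending $v$ to a smooth section $j_0$ of $\CT_X M$ in a conic neighbourhood of $\Sigma$ inside $\EllRegion$. Since $z\,j_0$ vanishes on $\Sigma=\{p=1\}$ and, by \eqref{hypo-atmost} together with the transversality of $\Sigma$, this zero is simple in the direction normal to $\Sigma$, Hadamard's lemma yields a smooth $\tilde j_0$ with $z\,j_0=(p-1)\tilde j_0$ and $\tilde j_0\restrict{\Sigma}\neq 0$. Quantize these to $J_\h,\tilde J_\h\in\Pskm{0}{0}$ and $p$ to $P\in\Psm{1}(X;\halfdens)$: matching principal symbols gives $TB_\h J_\h-\tilde J_\h(P-\h^{-1})=\bigoh(\h)$. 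Ellipticity of $J_\h^*J_\h$ at $\Sigma$ holds because its principal symbol there is $|v|^2=1$. Higher-order corrections are constructed by the usual iteration; at each step the $\h^k$-residual splits into a component along $v$ (absorbed into the lower symbols of $P$) and a component along $v^\perp$ (absorbed into $J_\h,\tilde J_\h$, using the invertibility of $z$ on $v^\perp$ near $\Sigma$). Selfadjointness of $P$ is enforced by the Hermitian variant of this iteration: since $z=z^*$, the composition $J_\h^* TB_\h J_\h$ is selfadjoint to all orders, while $J_\h^*\tilde J_\h$ is elliptic and selfadjoint near $\Sigma$, so one may arrange $P\equiv P^*$ modulo $\bigoh(\h^\infty)$.

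Formula \eqref{subprinc-P} is extracted by matching the $\h^1$-terms via the Weyl composition formula, using the subprincipal symbol of $B_\h$ obtained from \eqref{spectral-factorization} and the collar geometry, together with the subprincipal contribution of $T$. Pairing the identity on $\Sigma$ against $v$ and dividing by $(\tilde j_0|v)$ yields $\subprinc{p}$. The gauge transformation $v\mapsto e^{i\varphi}v$ multiplies $J_\h$ and $\tilde J_\h$ by a quantization of $e^{i\varphi}$ at leading order, and the Weyl composition formula produces the shift $\subprinc{p}\mapsto\subprinc{p}+\{p,\varphi\}$; this shift depends only on $\varphi\restrict{\Sigma}$ because $H_p$ is tangent to $\Sigma$. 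The main obstacle is the explicit subprincipal computation, which demands careful bookkeeping of the non-scalar diagonalization implicit in \eqref{spectral-factorization}, the half-density conventions on $X$ versus the vector fibre $\CT_X M$, and the interaction of $T$ with the decaying Poisson profile along the normal.
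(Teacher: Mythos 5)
Your overall skeleton agrees with the paper's: a Poisson (Dirichlet) parametrix built from the factorization \eqref{spectral-factorization} in a collar, identification of the boundary operator's principal symbol with $z$, and a reduction of the resulting matrix operator to a scalar first order operator carried by the section $v$. Your scalarization step, however, is genuinely different: you divide $z\,j_0=(p-1)\tilde j_0$ by Hadamard's lemma and set up a one-sided intertwining $Z J_\h\equiv \tilde J_\h(\h P-1)$ corrected iteratively, whereas the paper first block-diagonalizes the selfadjoint operator $Z$ by an isometry $V$ with $V^*V=\Id$ and a skew correction $R$ (Lemma~\ref{lemma-diagonalize}), obtaining an honest scalar selfadjoint operator $V^*(Z+B)V$, and only then converts it to $\h P-1$ with $P$ independent of $\h$ (Lemma~\ref{lemma-P}, following Popov--Vodev and Stefanov). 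Your principal-level step is sound (note $(\tilde j_0|v)=(\dot z v|v)>0$ on $\Sigma$, so the splitting of each residual into a $v$-component and an $\operatorname{im}z$-component does work), but the sketch leaves real gaps precisely where the theorem has content beyond the principal symbol.

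First, selfadjointness and $\h$-independence of $P$. The claim that $J_\h^*TB_\h J_\h$ is ``selfadjoint to all orders since $z=z^*$'' is unjustified: selfadjointness of the principal symbol gives nothing beyond leading order; in the paper $Z\equiv Z^*$ is proved by a Green's identity using $(\h^2L-\rho)B\equiv 0$ (Lemma~\ref{Z-op-mainsymb}), and even granting this, your relation only yields $J_\h^*\tilde J_\h(\h P-1)$ selfadjoint, from which $P\equiv P^*$ does not follow without an argument (and ``$J_\h^*\tilde J_\h$ selfadjoint near $\Sigma$'' is itself something the construction must arrange, since $J_\h$ and $\tilde J_\h$ differ at lower order). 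Likewise, your iteration naturally produces an $\h$-dependent scalar symbol; the theorem asserts $P\in\Psm{1}(X;\halfdens)$ independent of $\h$ with polyhomogeneous symbol, which requires the extra homogenization step of Lemma~\ref{lemma-P} (absorbing multiples of $p-1$ by symmetric conjugations $(1-\h R^*)\cdot(1-\h R)$ so that the $\h^k$-coefficients can be taken homogeneous of degree $1-k$). Second, the formula \eqref{subprinc-P} — the main new content — is not derived. There is no invariant Weyl calculus for bundle-valued operators on a manifold; this is exactly why the paper works with Sharafutdinov's geometric calculus, computes the order-$\h$ part $z_-$ of $Z$ from the factorization (formula \eqref{sub-z-of-Z}), and normalizes the scalarizer to be an isometry, which is what cancels the spurious order-$\h$ terms in \eqref{Z-itwine-mainsymb}. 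With your unnormalized $J_\h$ (principal symbol $1$ only on $\Sigma$) the order-$\h$ matching produces additional contributions that you would have to show are either absorbed by symmetrization or vanish on $\Sigma$; ``pairing against $v$ and dividing by $(\tilde j_0|v)$'' does not by itself produce the terms $\IM\trace(v^*\verderiv z.\horderiv v)$ and $\IM\trace\horderiv p.\verderiv v^*.v$ in \eqref{subprinc-P}. Your conjugation argument for the gauge change $v\mapsto e^{i\varphi}v$ is plausible (and is an alternative to the paper's direct verification from \eqref{subprinc-P}), but it presupposes the unproven subprincipal formula and the selfadjoint, $\h$-independent normal form.
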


This result is known in the isotropic case,
\cite{CardosoPopov92RayleighQM}, \cite{Stefanov00LowerBd},
except for the assertions about the subprincipal symbol.

The operator $B_\h$ is a parametrix of the Dirichlet problem near $\Sigma$;
see Proposition~\ref{exist-dirichlet-param}.
Its range consists of functions which are smooth in the interior of $M$,
supported in a preassigned neighbourhood of the boundary,
and which decay like $e^{-\delta\dist_X/\h}$ into the interior.

Ignoring finitely many eigenvalues the spectrum of $P$ consists
of a sequence of positive eigenvalues $\mu_j\uparrow\infty$.
Applying Theorem~\ref{thm-reduct} to an associated orthonormal system
of eigenvectors we obtain, in Proposition~\ref{prop-qm-indep},
a sequence of quasimode states:
$L u_j-\mu_j^{2} \rho u_j =\bigoh_{\Cinfty}(\h_j^{\infty})$
with boundary tractions equal to zero.
Moreover, the quasimode states are well-separated.
The construction also works when starting with a sequence
of almost orthogonal quasimode states of $P$.

The unbounded operator
$D\to L^2(M;\CT M;\rho\dvol_M)$, $u\mapsto \rho^{-1}Lu$,
with domain $D=\{u\in \Ccinfty\setof Tu=0\}$,
is symmetric and nonnegative.
The associated quadratic form is
given by the left-hand side of \eqref{def-elast-traction}.
Denote $L_T$ the Friedrichs extension of this operator.
For a selfadjoint operator $A$ with spectrum consisting of a
sequence of eigenvalues accumulating at $+\infty$, 
denote $N_A(\lambda)$ the usual counting function for the eigenvalues of $A$.
The following lower bound on $N_{L_T}(\lambda)$ is an example
application of our results.

\begin{cor}
\label{cor-ev-asymp}
Assume $M$ compact, $\dim M=3$, and \eqref{hypo-intersect}, \eqref{hypo-trivial}.
Let $P$ be the selfadjoint operator given in Theorem~\ref{thm-reduct}.
For every $m>1$,
$N_{L_T}(\lambda)- N_{P}(\lambda-\lambda^{-m})$ is bounded from below.
\end{cor}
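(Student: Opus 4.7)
My plan is to promote each eigenfunction of $P$ with eigenvalue below $\lambda-\lambda^{-m}$ to a well-separated quasimode state of $L_T$ via Theorem~\ref{thm-reduct}, and then to apply the standard variational reduction from approximate to genuine eigenvalues. The tolerance $\lambda^{-m}$ with $m>1$ is chosen so as to dominate the super-polynomially small residuals of the quasimode construction.

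Enumerate the eigenvalues $\mu_1\le\mu_2\le\cdots$ of $P$ (repeated with multiplicity), together with an $L^2$-orthonormal system of eigenfunctions $\phi_j$. For each $\mu_j\le\lambda-\lambda^{-m}$ beyond a fixed threshold, the choice $\h_j=1/\mu_j$ makes $(P-\h_j^{-1})\phi_j=0$, so Theorem~\ref{thm-reduct} produces $u_j:=B_{\h_j}J_{\h_j}\phi_j$ satisfying $(\h_j^2 L-\rho)u_j\equiv 0$ and $Tu_j\equiv 0$. Equivalently, $u_j$ is a quasimode of $L_T$ at the frequency level $\mu_j$ with residuals of order $\mathcal{O}(\mu_j^{-\infty})$ in any Sobolev norm. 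Ellipticity of $J_\h^*J_\h$ at $\Sigma$ combined with the microlocal concentration of $\phi_j$ near $\Sigma$ for large $j$ gives a uniform lower bound $\|u_j\|_{L^2}\ge c>0$, while microlocality of $B_\h,J_\h$ together with the orthonormality of the $\phi_j$ yields $(u_i|u_j)=\mathcal{O}((\mu_i+\mu_j)^{-\infty})$ for $i\ne j$. These are the well-separatedness properties asserted in Proposition~\ref{prop-qm-indep}.

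Next, I invoke the classical quasimode lemma: for a self-adjoint operator $A$ and vectors $v_1,\ldots,v_N$ with $\|(A-\kappa_j)v_j\|\le\eps_j\|v_j\|$ whose Gram matrix equals $\diag(\|v_j\|^2)$ up to a negligible perturbation, the spectral projection of $A$ onto $\bigcup_j[\kappa_j-\sqrt{\eps_j},\kappa_j+\sqrt{\eps_j}]$ has rank at least $N-\mathcal{O}(1)$. Applied with $v_j=u_j$, $A=L_T$, and $\kappa_j$ the $L_T$-value associated to the frequency $\mu_j$, the super-polynomial decay $\eps_j=\mathcal{O}(\mu_j^{-\infty})$ keeps each tiny interval $[\kappa_j-\sqrt{\eps_j},\kappa_j+\sqrt{\eps_j}]$ strictly below $\lambda$ once $\lambda$ is large, because $m>1$ ensures that $\lambda^{-m}$ dominates $\sqrt{\eps_j}$. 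This yields the claimed bound $N_{L_T}(\lambda)\ge N_P(\lambda-\lambda^{-m})-C$, with $C$ absorbing the finitely many low-frequency $P$-eigenvalues excluded at the start. The main obstacle is the near-orthogonality of the quasimode family across the different semiclassical scales $\h_j=1/\mu_j$, which is precisely what Proposition~\ref{prop-qm-indep} supplies; everything afterwards is routine bookkeeping within the $\lambda^{-m}$ window.
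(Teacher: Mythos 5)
There is a genuine gap at the heart of your argument: the claimed almost-orthogonality $(u_i\mid u_j)=\bigoh((\mu_i+\mu_j)^{-\infty})$ of the quasimode states in $L^2(M)$ is not what Proposition~\ref{prop-qm-indep} asserts, and it is not justified — indeed it is false in general. For two orthonormal eigenfunctions $\phi_i\perp\phi_j$ of $P$ with the same (or nearby) eigenvalue, one has
$(u_i\mid u_j)=\big(\h^{-1}J_\h^*B_\h^*B_\h J_\h\,\phi_i\mid\phi_j\big)$,
and $\h^{-1}J_\h^*B_\h^*B_\h J_\h$ is a nontrivial zeroth-order operator (its symbol involves the factor $q$ and the section $v$), not a perturbation of the identity; so the Gram matrix of the $u_j$ is merely positive, not near-diagonal, and your ``classical quasimode lemma'' cannot be applied as stated. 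Multiplicities and clusters of nearly coincident $\mu_j$ are exactly the delicate case, and this is what the paper's proof is organized around: it never proves orthogonality of the $u_j$, but instead (i) groups the $\mu_j$ into well-separated intervals $[a_k,b_k]$, (ii) projects the $u_j$ by the spectral projector $\pi_k$ of $L_T$ for the slightly enlarged interval, using the gap $\delta b_k^{-m-\dim X}$ and the spectral theorem to get $\|w_j-u_j\|_{L^2}=\bigoh(b_k^{-\infty})$, and (iii) proves linear independence of $\{w_j\}$ within each cluster by recovering the boundary eigenfunctions, $\h_j^{1/2}K_{\h_j}\gamma w_j=f_j+\bigoh(\h_j^{\eps+\dim X})$, via a left inverse $K_\h$ of $J_\h$ applied to the boundary trace; this step needs the Lipschitz dependence of $K_\h$ on $\h$ (which is why the clusters must be narrow, of width $b_k^{-m}$) and Stefanov's linear-independence lemma for the almost orthonormal $f_j$. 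Your proposal omits the clustering entirely and attributes to Proposition~\ref{prop-qm-indep} a statement it does not contain, so the passage from $N$ quasimodes to $N$ eigenvalues is not established.

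Two smaller points. First, Theorem~\ref{thm-reduct} only gives $TB_{\h_j}J_{\h_j}\phi_j=\bigoh_{\Cinfty}(\h_j^\infty)$, not $Tu_j=0$; without the exact correction $u_j'$ of \eqref{QM-uj} your $u_j$ need not lie in the form domain/domain of $L_T$, so $\|(L_T-\kappa_j)u_j\|$ is not even defined and the spectral-theorem step cannot start. Second, the uniform lower bound $\|u_j\|_{L^2}\geq c>0$ does not follow from the data you cite ($\|B_\h\|=\bigoh(\h^{1/2})$ is an upper bound); the paper avoids needing it by working with $K_{\h}\gamma u_j$ and $\|f_j\|=1$ instead.
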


Rayleigh waves have been studied in several papers with the emphasis
of getting information about resonances in scattering theory,
\cite{StefanovVodev95duke}, \cite{StefanovVodev96Reson},
\cite{SjostrandVodev97RayleighRes}, \cite{Stefanov00LowerBd},
and, for anisotropic media,
\cite{KawaNaka00Poles}.
Stefanov \cite{Stefanov00LowerBd} 
uses Rayleigh quasimodes to derive lower bounds on the number of resonances.
See the remark at the end of section~\ref{sect-modes}
about going from quasimodes to resonances.

The subprincipal symbol $\subprinc{p}$ affects the eigenvalue asymptotics of $P$,
\cite{duistermaat/guillemin:75:spectrum},
and it enters quasimode constructions, \cite{CardosoPopov92RayleighQM}.
The subprincipal symbol occurs in the final formulas via integrals,
such as $\int_{S^*X} \subprinc{p}$ and $\int_\gamma \subprinc{p}$,
where $\gamma$ is a closed bicharacteristic.
We point out that these integrals do not depend on the choice of the unit section $v$
in Theorem~\ref{thm-reduct}, although $\subprinc{p}$ itself does.
An important aim of the present work is to give explicit formulas for the
subprincipal symbol of $P$.
These seem to be new even in the isotropic case which is dealt with
in more detail in Proposition~\ref{prop-subprinc-isoelast}.
The main difficulty comes from the fact that an invariant notion
of subprincipal symbol has only been available for scalar operators.
To overcome this obstacle we adapt and systematically use the geometric
\psdiff\ calculus of Sharafutdinov~\cite{Sharaf04geo1,Sharaf05geo2}
which assumes given a differential geometric structure.
The principal and subprincipal symbol levels are contained in
the leading symbol of a (pseudo-)differential operator.

The paper is organized as follows.
In section~\ref{section-impedance} the surface impedance tensor is studied;
in particular, a selfcontained treatment of Barnett-Lothe theory is given.
The leading geometric symbols of some differential operators
are computed in section~\ref{sect-conn-geosymb}.
In section~\ref{sect-collar} we geometrically decompose the elasticity operator near the boundary
into normal and tangential operators, keeping track of leading geometric symbols.
Section~\ref{sect-factorization} gives, microlocally at the elliptic region $\EllRegion$,
a factorization of $\h^2L-\rho$ into a product of first order operators.
Using the factorization, we construct in section~\ref{sect-parametrix}
a parametrix for the Dirichlet problem microlocally at $\EllRegion$.
The displacement-to-traction operator $Z$ is defined in section~\ref{sect-dt-op},
and its leading geometric symbol is determined.
In section~\ref{sect-diag} we derive a diagonalization of $Z$,
and we prove Theorem~\ref{thm-reduct}.
In section~\ref{sect-modes} we construct localized traction-free surface quasimodes,
and we prove Corollary~\ref{cor-ev-asymp}.
In section~\ref{sect-isosubpr} we calculate, for an isotropic elastic medium,
the subprincipal symbol of $P$.
The appendix~\ref{app-geopsdo-calc} contains a detailed exposition
of Sharafutdinov's geometric \psdiff\ calculus in a semiclassical setting.

\begin{ack}
The author thanks G.~Mendoza for inspiring discussions about an earlier version
of the present work, and for helpful remarks,
in particular, concerning condition~\eqref{hypo-trivial}.
Thanks go also to P.~Stefanov for helpful responses and questions via email.
\end{ack}

\section{The Surface Impedance Tensor}
\label{section-impedance}

First we collect some well-known facts about spectral factorizations of selfadjoint matrix polynomials.
Refer to \cite[Chapter~11]{gohberg/lancaster/rodman:82:matrixpolynomials}.
Let $V$ be a finite-dimensional complex Hilbert space,
and $f(s)=as^2+bs+c\in \End(V)$ a quadratic polynomial in the complex variable $s$.
The spectrum of $f$ is the set of $s\in\C$ such that $\Kernel f(s)\neq 0$.
Assume that the leading coefficient of $f$, $a$, is nonsingular.
Then the spectrum is finite.
Assume that $f$ is selfadjoint, $f(s)^*=f(\bar{s})$, and that, in addition
$f(s)$ is positive definite for real $s$.
The spectrum of $f$ is a disjoint union $\sigma_+\cup\sigma_-$,
where $\sigma_+$ and $\sigma_-$ are contained in the upper and lower half-planes, respectively.
There is a unique $q\in\End(V)$ such that
$f(s)=(s-q^*)a(s-q)$, and the spectrum of $q$ equals $\sigma_-$.
If $\gamma$ is a closed Jordan curve which contains $\sigma_-$ in its interior
and $\sigma_+$ in its exterior, then
\begin{equation}
\label{q-int-rep}
q \oint_{\gamma} f(s)^{-1}\intd s
    = \oint_{\gamma} s f(s)^{-1}\intd s.
\end{equation}
The integral on the left is nonsingular.
Jordan-Keldysh chains are a means to compute $q$.
In particular, one has $qv=sv$ if $f(s)v=0$ and $\IM s<0$.
Moreover, the solvency equation $f(q)=0$ holds.

The following representation of the factor $q$ by integrals is important.
We shall also apply it later to establish symbol properties.
Denote $i=\sqrt{-1}$ the imaginary unit.

\begin{lemma}
\label{factor-sapolyn}
Let $f$ and $q$ be as above. Then
\begin{equation}
\label{factor-sapolyn-rep}
a\,q f_0 = -\pi i\Id + f_1,
\end{equation}
where 
$f_0 = \int_{-\infty}^\infty f(s)^{-1}\intd s$ is
selfadjoint and positive definite, and
\[
f_1 = \int_{|s|\leq 1} s a f(s)^{-1}\intd s
      + \int_{|s|> 1} s^{-1} \big(s^2 a -f(s)\big) f(s)^{-1}\intd s.
\]
The integrals converge absolutely in $\End(V)$.
\end{lemma}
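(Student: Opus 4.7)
The plan is to combine the integral representation \eqref{q-int-rep} of $q$ with a contour deformation to the real line, handling the lack of absolute convergence by splitting into $|s|\leq 1$ and $|s|>1$. First I would fix $R$ large enough that $\sigma_-$ lies inside, and $\sigma_+$ outside, the lower semidisk $\{|s|\leq R,\,\IM s\leq 0\}$, and take $\gamma=\gamma_R$ to be its positively oriented boundary. This $\gamma$ consists of the segment $[-R,R]$ traversed from $R$ to $-R$, followed by the semicircle $s=Re^{i\theta}$, $\theta\in[\pi,2\pi]$. Since $a$ is invertible, $f(s)^{-1}=s^{-2}a^{-1}+\bigoh(s^{-3})$ as $|s|\to\infty$, so the semicircular contribution to $\oint_\gamma f(s)^{-1}\intd s$ is $\bigoh(R^{-1})$ and vanishes in the limit, giving
\[
\lim_{R\to\infty}\oint_{\gamma_R} f(s)^{-1}\intd s = -f_0.
\]

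Next I would analyse $\oint_\gamma sf(s)^{-1}\intd s$ by writing $sf(s)^{-1}=s^{-1}a^{-1}+r(s)$ with $r(s)=\bigoh(s^{-2})$. On the semicircle the $r(s)$-contribution is $\bigoh(R^{-1})$, while the $s^{-1}a^{-1}$-contribution equals $a^{-1}\int_\pi^{2\pi} i\intd\theta=i\pi a^{-1}$. On the real segment the contribution is $-\int_{-R}^R sf(s)^{-1}\intd s$; splitting the domain into $|s|\leq 1$ and $1<|s|\leq R$, and using the identity $saf(s)^{-1}-s^{-1}\Id=s^{-1}\bigl(s^2a-f(s)\bigr)f(s)^{-1}$ together with the cancellation $\int_{1<|s|\leq R} s^{-1}\intd s=0$, the limit is exactly $a^{-1}f_1$. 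Combining, I obtain
\[
\lim_{R\to\infty}\oint_{\gamma_R} sf(s)^{-1}\intd s = -a^{-1}f_1+i\pi a^{-1}.
\]
Inserting these two limits into \eqref{q-int-rep} and multiplying by $a$ yields $aqf_0=-\pi i\Id+f_1$. The selfadjointness and positive definiteness of $f_0$ follow immediately from the selfadjointness and positive definiteness of $f(s)^{-1}$ for real $s$, which is a consequence of $f(s)^*=f(\bar s)=f(s)$ and the assumption that $f(s)>0$ on $\R$.

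The routine parts are the contour deformation and the asymptotics of $f(s)^{-1}$; the one place requiring care is the real-axis integral of $sf(s)^{-1}$, which is not absolutely convergent because $sf(s)^{-1}\sim s^{-1}a^{-1}$. The point of the specific decomposition defining $f_1$ is precisely to subtract this $s^{-1}\Id$-term on $|s|>1$, so that everything is absolutely convergent; checking that the resulting expression still matches the deformed contour integral after multiplication by $a$ is the main technical bookkeeping.
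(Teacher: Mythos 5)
Your proposal is correct and follows essentially the same route as the paper: apply \eqref{q-int-rep} on a large half-disk contour in the lower half-plane, use $f(s)^{-1}=s^{-2}a^{-1}+\bigoh(|s|^{-3})$ to isolate the $-\pi i\Id$ contribution from the semicircle, and convert the conditionally convergent real-axis integral via the identity $saf(s)^{-1}-s^{-1}\Id=s^{-1}(s^2a-f(s))f(s)^{-1}$ with the odd-function cancellation on $1<|s|\leq R$. The only differences are the (immaterial) orientation convention and that you multiply by $a$ at the end rather than carrying it inside the integrals.
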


\begin{proof}
Let $\gamma_R$ denote the negatively oriented closed contour composed
of the semicircle $\{|s|=R,\IM s\leq 0\}$
and the interval $[-R,R]$.
The integral representation~\eqref{q-int-rep} holds with $\gamma=\gamma_R$
if $R$ is sufficiently large.
We have $f(s)^{-1} = s^{-2} a^{-1}+\bigoh(|s|^{-3})$ as $|s|\to\infty$.
It follows that
\[
\lim_{R\to\infty} \oint_{\gamma_R} f(s)^{-1}\intd s 
= \int_{-\infty}^\infty f(s)^{-1}\intd s,
\]
and
\[
\lim_{R\to\infty}\oint_{\gamma_R}sa f(s)^{-1}\intd s 
= -\pi i\Id + \lim_{R\to\infty} \int_{-R}^R sa f(s)^{-1} \intd s.
\]
Using $s^2 a f(s)^{-1}-\Id =(s^2 a-f(s)) f(s)^{-1}$ we obtain
\[
\int_{1<|s|\leq R} sa f(s)^{-1} \intd s
    = \int_{1<|s|\leq R} s^{-1} \big(s^2 a -f(s)\big) f(s)^{-1} \intd s.
\]
This proves the formulas.
The remaining assertions follow from these
and the positive definiteness of $f(s)$.
\end{proof}

Let $\xi\in T_x^* X$, and denote $\nu\in T_x^*M$ the unit exterior normal.
Set $a=c(\nu)$, $a_1(\xi)=c(\nu,\xi)$, and $a_2(\xi)=c(\xi)$.
Note that $a_1(\xi)^*=c(\xi,\nu)$.
The polynomial
\begin{equation}
\label{def-f-of-s}
f(s)=c(\xi+s\nu)-\rho =as^2 +(a_1+a_1^*)s+a_2 -\rho,
\end{equation}
$f(s)=f(s,\xi)$,
has values in $\End(\CT_x M)$.
It is selfadjoint with real coefficients.
By definition, $\xi\in\EllRegion$ iff $f(s)$ is positive definite for $s\in\R$.
\begin{lemma}
The elliptic region $\EllRegion$ is an open subset of $T^*X$ with compact complement.
Moreover, $\EllRegion$ is symmetric and star shaped with respect to infinity,
i.e., $t\xi\in\EllRegion$ whenever $\xi\in\EllRegion$ and $t$ real, $|t|\geq 1$.
\end{lemma}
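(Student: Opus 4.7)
The plan is to unpack the definition $\EllRegion=\{\xi\in T^*X\setof f(s,\xi)>0\ \forall s\in\R\}$ and exploit the quadratic structure of $f(s,\xi)=c(\nu)s^2+(a_1+a_1^*)s+a_2-\rho$, together with two basic facts: $c(\nu)$ is positive definite, and $c(\eta)$ is positive definite for any nonzero covector $\eta$ (this follows from the strong convexity of $C$ applied to the nonzero symmetric tensor $(v\otimes\eta+\eta\otimes v)/2$). All four claims reduce to a careful tracking of how $f(s,\xi)$ scales in $\xi$ and $s$.

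For \emph{openness}, fix $\xi_0\in\EllRegion$. Since the leading coefficient $c(\nu)$ is positive definite, for $\xi$ in a bounded neighbourhood of $\xi_0$ the smallest eigenvalue of $f(s,\xi)$ is bounded below by $c|s|^2$ for $|s|\geq R$ with some large $R$ independent of $\xi$. On the compact set $|s|\leq R$ the positivity at $\xi_0$ and continuity of eigenvalues then give positivity for $\xi$ in a (possibly smaller) neighbourhood. Combining these two regimes yields $f(s,\xi)>0$ for all $s\in\R$, hence openness.

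The \emph{compact complement} is the crucial quantitative claim. I write $\xi=t\eta$ with $|\eta|=1$, substitute $s=tr$, and compute
\[
f(tr,t\eta)=t^2\big(c(\nu)r^2+(c(\nu,\eta)+c(\eta,\nu))r+c(\eta)\big)-\rho = t^2\,c(r\nu+\eta)-\rho.
\]
Since $r\nu+\eta\neq 0$ for all $(r,\eta)\in\R\times S^*X$ (as $\nu\perp T^*X$ and $\eta\neq 0$), the map $(r,\eta)\mapsto c(r\nu+\eta)$ is positive definite; and using the leading term $c(\nu)r^2$ for $|r|$ large together with compactness of $\{|r|\leq R_0,|\eta|=1\}$ one gets a uniform lower bound $c(r\nu+\eta)\geq\delta\Id$ with $\delta>0$. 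Hence $f(tr,t\eta)\geq(t^2\delta-\rho)\Id>0$ for $t$ sufficiently large, uniformly in $\eta$ and $r$. This proves $\{\xi\setof|\xi|\geq R\}\subset\EllRegion$ for some $R$, so $\EllRegion^c$ is bounded and, by openness of $\EllRegion$, closed.

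For \emph{symmetry}, observe $f(s,-\xi)=f(-s,\xi)$, so positivity at $\xi$ for all real $s$ is equivalent to positivity at $-\xi$ for all real $s$. For \emph{star-shapedness with respect to infinity}, by symmetry I may assume $t\geq 1$; substituting $s=ts'$ and collecting terms gives the identity
\[
f(ts',t\xi)=t^2\big(f(s',\xi)+\rho\big)-\rho = t^2 f(s',\xi)+(t^2-1)\rho,
\]
which is manifestly positive definite for every $s'\in\R$ whenever $\xi\in\EllRegion$ and $t\geq 1$, because $\rho>0$. As $s=ts'$ ranges over $\R$ with $s'$, this gives $t\xi\in\EllRegion$. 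The main obstacle is really just organizing the uniformity in $s$ for openness and verifying the scaling identity for compact complement; both are routine once one has noted that the homogeneity of $c$ and the positivity of $\rho$ combine to make everything work cleanly.
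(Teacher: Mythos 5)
Your proof is correct and rests on the same ingredients as the paper's: strong convexity gives positive definiteness of $c(\eta)$ for $\eta\neq 0$, and the homogeneity $c(t\eta)=t^2c(\eta)$ yields the scaling identities for compact complement, symmetry, and star-shapedness (the paper phrases the quantitative input as the single estimate $c(\eta)\geq\delta|\eta|^2\Id$, which is exactly what your compactness argument on $\{|r|\leq R_0,\ |\eta|=1\}$ reproduces). No gaps; this is essentially the paper's argument written out in more detail.
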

\begin{proof}
By positive definiteness of $C$, there exists $\delta>0$ such that
$g(v,c(\eta)v)\geq \delta|v\otimes\eta+\eta\otimes v|^2$ for (co-)vectors $v,\eta$.
The symmetrization of a non-zero real elementary tensor is non-zero.
Therefore, with a new $\delta>0$,
in the sense of selfadjoint maps,
\begin{equation}
\label{C-is-elliptic}
c(\eta)\geq \delta|\eta|^2\Id. 
\end{equation}
Since $|\xi+s\nu|^2=|\xi|^2+s^2$ the first assertions follow.
The symmetry and the star-shapedness follow from $c(t\eta)=t^2 c(\eta)$.
\end{proof}

If $\xi\in\EllRegion$, then
\eqref{factor-sapolyn-rep} holds with $q=q(\xi)$, $f_j=f_j(\xi)$.
The spectral factor $q$ solves \eqref{spectral-factorization};
using current notation:
\begin{equation}
\label{factor-with-q}
as^2 +(a_1+a_1^*)s + a_2 -\rho = (s - q^*) a(s-q).
\end{equation}
The spectrum of $q$ lies in the lower halfplane,
and $q$ is uniquely determined by these properties.
Notice that $q$ is a smooth section of the bundle $\pi^* \End(\CT_X M)\ueber\EllRegion$,
where $\pi:\EllRegion\subset T^*X\ueber X$ denotes the canonical projection.

The surface impedance tensor, defined in \eqref{def-surfimped-z},
equals $z=i(aq+a_1)$.
Lemma~\ref{factor-sapolyn} implies
\begin{equation}
\label{barnett-lothe-identity}
z f_0 = \pi \Id + i(f_1+a_1 f_0).
\end{equation}
Since the $f_j$'s are real, this gives the decomposition of $z$
into real and imaginary parts.
Following \cite{mielke/fu:04:uniq-surface-wave-speed},
we shall use the Ricatti-type equation
\begin{equation}
\label{ricatti-z}
(z+ia_1^*)a^{-1}(z-ia_1) = a_2-\rho
\end{equation}
to deduce properties of $z$.
Equation~\eqref{ricatti-z} follows upon insertion of $q=-a^{-1}(iz+a_1)$
into the solvency equation associated with \eqref{factor-with-q},
\begin{equation}
\label{solvency}
aq^2 +(a_1+a_1^*)q + a_2-\rho = 0.
\end{equation}
A consequence of \eqref{ricatti-z} is
\begin{equation}
\label{ricatti-z-prime}
(iq)^* z'+z'(iq) = {a_1^*}'q +q^* a_1' +(a_2-\rho)' +q^*a' q,
\end{equation}
where the prime denotes the derivative with respect to some chosen parameter.
The spectra of $q$ and $q^*$ are disjoint.
Therefore, the Sylvester equation $(iq)^*x+x(iq)=i(xq-q^*x)=y$
has a unique solution $x$ for given $y$.
The solution is, in fact, given by an integral,
$x=\int_{-\infty}^0 \exp(irq)^* y\exp(irq)\intd r$.
It follows that $x$ is positive definite if $y$ is.

\begin{prop}
\label{prop-z}
The impedance tensor $z(\xi)$, $\xi\in\EllRegion$, has the following properties.
\begin{compactenum}[(i)]
\item\label{z-selfadjoint}
  $z(\xi)$ is selfadjoint.
\item\label{z-posdef-large}
  $z(\xi)$ is positive definite if $|\xi|$ is sufficiently large.
\item\label{z-re-posdef}
  $\RE z(\xi)$ is positive definite.
\item\label{z-two-eigen}
  $z(\xi)$ has at least two positive eigenvalues if $\dim M\geq 3$.
\item\label{z-radderiv-posdef}
  $(d/dt)\restrict{t=1} t^{-1} z(t\xi)=\dot{z}-z$ is positive definite.
\item\label{z-conjugate}
  The complex conjugate $\overline{z(\xi)}= z(-\xi)$.
\end{compactenum}
\end{prop}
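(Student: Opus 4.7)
The plan is to derive each of the six properties from a small algebraic toolbox: the solvency equation~\eqref{solvency}, the integral representation~\eqref{factor-sapolyn-rep} from Lemma~\ref{factor-sapolyn}, the differentiated Riccati equation~\eqref{ricatti-z-prime}, and uniqueness of the spectral factor $q$ (which, since $\spec q\subset\C_-$, is in particular invertible).

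For (i), expand
\[
(z-z^*)q = i\bigl[aq^2 + q^*aq + (a_1+a_1^*)q\bigr];
\]
by~\eqref{solvency} the combination $aq^2 + (a_1+a_1^*)q$ equals $\rho - a_2$, while $q^*aq = f(0) = a_2 - \rho$, so the right-hand side vanishes and invertibility of $q$ gives $z = z^*$. For (vi), $f(s,-\xi) = f(-s,\xi)$ follows from $a_1(-\xi) = -a_1(\xi)$ and $a_2(-\xi) = a_2(\xi)$; entry-wise complex conjugation of the factorization of $f(s,\xi)$ yields an alternate factorization of $f(s,-\xi)$ whose right factor $-\overline{q(\xi)}$ has spectrum in $\C_-$, so uniqueness forces $q(-\xi) = -\overline{q(\xi)}$, and substitution into~\eqref{def-surfimped-z} produces $\overline{z(\xi)} = z(-\xi)$.

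For (iii), read off entry-wise real and imaginary parts of~\eqref{factor-sapolyn-rep} (noting $a, f_0, f_1$ are real) to obtain $\IM(aq) = -\pi f_0^{-1}$, whence $\RE z = -\IM(aq) = \pi f_0^{-1}$, positive definite because $f_0=\int f(s)^{-1}\intd s$ is. Together with (i), this gives the decomposition $z = A + iB$ with $A = \RE z$ real symmetric positive definite and $B = \IM z$ real skew-symmetric. For (iv), apply Sylvester's law of inertia to the congruent matrix $A^{-1/2} z A^{-1/2} = \Id + iB'$, where $B' = A^{-1/2}BA^{-1/2}$ is real skew-symmetric; the Hermitian operator $iB'$ has eigenvalues coming in antisymmetric pairs $\pm\mu_j\in\R$ (with zeros filling out the rest), so at most $\lfloor \dim M/2 \rfloor$ of them lie below $-1$, forcing $z$ to have at least $\lceil \dim M/2 \rceil \geq 2$ positive eigenvalues once $\dim M \geq 3$.

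For (v), differentiate along the radial scaling $\xi(t)=t\xi$: at $t=1$ one has $a'=0$, $a_1'=a_1$, $(a_2-\rho)'=2a_2$, so~\eqref{ricatti-z-prime} becomes $(iq)^*\dot z + \dot z(iq) = a_1^*q + q^*a_1 + 2a_2$. A parallel computation from $z=i(aq+a_1)$ using~\eqref{solvency} and $q^*aq = a_2-\rho$ gives $(iq)^*z + z(iq) = q^*a_1 + a_1^*q + 2(a_2-\rho)$; subtracting yields
\[
(iq)^*(\dot z - z) + (\dot z - z)(iq) = 2\rho\Id.
\]
Since $\spec(iq)\subset\{w\in\C\setof \RE w>0\}$, the Sylvester equation $B^*x + xB = y$ with $B=iq$ has unique solution $x = \int_{-\infty}^0 e^{rB^*} y\, e^{rB}\intd r$, which is positive definite whenever $y$ is. For (ii), (v) implies monotonicity of $t\mapsto t^{-1}z(t\xi)$ in the Hermitian order, with limit as $t\to\infty$ equal to the impedance $z_\infty(\xi)$ of the frozen elastostatic ($\rho=0$) problem; positive definiteness of $z(\xi)$ for large $|\xi|$ thus reduces to that of $z_\infty$. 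I expect the main obstacle to be precisely this last step: positive definiteness of $z_\infty$ is not forthcoming from the above algebraic manipulations (notice that (v) degenerates to $\dot z_\infty - z_\infty = 0$ in the static case), so it will require either an independent argument exploiting the extra homogeneity available when $\rho=0$ or an appeal to the classical Lothe-Barnett result on the elastostatic surface impedance.
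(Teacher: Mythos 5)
Your arguments for (i), (iii), (iv), (v) and (vi) are correct. For (i), multiplying $z-z^*$ on the right by $q$ and using \eqref{solvency} together with $q^*aq=f(0)=a_2-\rho$ is a legitimate shortcut of the paper's route (which subtracts the Riccati equation \eqref{ricatti-z} for $z$ and $z^*$ and invokes uniqueness of the Sylvester solution); for (vi) you use uniqueness of the spectral factor to get $q(-\xi)=-\overline{q(\xi)}$ instead of the identity \eqref{barnett-lothe-identity}, which is fine; for (iii) your reading of \eqref{factor-sapolyn-rep} is exactly the content of \eqref{barnett-lothe-identity}, giving $\RE z=\pi f_0^{-1}$; for (v) you arrive at the paper's key identity $(iq)^*(\dot z-z)+(\dot z-z)(iq)=2\rho$ and solve it by the same positive Sylvester integral. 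Your proof of (iv) by congruence and inertia, $A^{-1/2}zA^{-1/2}=\Id+iB'$ with $B'$ real skew and $\spec(iB')$ symmetric about $0$, is genuinely different from the paper's indirect argument and yields the sharper count of at least $\lceil\dim M/2\rceil$ positive eigenvalues.

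Part (ii), however, is not proved, and this is exactly where the substance of the paper's proof lies. Two things are missing. First, the existence of the limit $z_\infty=\lim_{t\uparrow\infty}t^{-1}z(t\eta)$: the paper obtains it from the integral representation \eqref{factor-sapolyn-rep} applied to $f(s,\eta)-t^{-2}\rho$ by dominated convergence, giving $q_t\to q_\infty$ and $z_\infty=i(aq_\infty+a_1(\eta))$; you only assert it. (Note also that the monotonicity you extract from (v) points the wrong way: $t\mapsto t^{-1}z(t\xi)$ increases towards $z_\infty$, so it gives $t^{-1}z(t\xi)\leq z_\infty$; what transfers definiteness to large finite $|\xi|$ is norm convergence together with strict definiteness of $z_\infty$, not the monotone order.) Second, and mainly, the positive definiteness of $z_\infty$ itself. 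As you observe, the Hermitian algebra of $q$ and $z$ degenerates at $\rho=0$, so no manipulation of \eqref{ricatti-z-prime} can settle it; the positivity genuinely uses the strong convexity of $C$. The paper argues as follows: take $y$ with $(z_\infty y|y)\leq 0$, set $w(r)=\exp(irq_\infty)y$, use the solvency equation with $\rho=0$ and an integration by parts to identify $(z_\infty y|y)$ with $\int_{-\infty}^0\big(W(r)\mid W(r)\big)_C\intd r$, where $W(r)=w(r)\otimes\eta+D_rw(r)\otimes\nu$; since $(\;\mid\;)_C$ is an inner product on symmetric tensors, the symmetrization of $W(r)$ vanishes, and testing against $\zeta\otimes\nu+\nu\otimes\zeta$ (first with $\zeta=\nu$, using $(\eta|\nu)=0$ and $w(r)\to0$ as $r\to-\infty$) forces successively $(w|\nu)=0$, $D_rw=0$, $w=0$, $y=0$. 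Appealing instead to the classical Barnett--Lothe result is possible but abandons the self-contained treatment; in any case you supply neither that argument nor a precise citation, so (ii) remains open in your proposal.
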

We call $\dot{z}(\xi)=(d/dt)\restrict{t=1} z(t\xi)$ the radial derivative of $z$ at $\xi$.
It follows from \eqref{z-radderiv-posdef} that $\dot{z}$ is positive definite
on the kernel of $z$, $\Kernel z$.
\begin{proof}
To prove \eqref{z-selfadjoint} we follow the arguments in
\cite[Theorem~2.2]{mielke/fu:04:uniq-surface-wave-speed}.
First note that \eqref{ricatti-z} remains true if $z$ is replaced by $z^*$.
Subtracting the two equations we get the Sylvester equation
$(iq)^*(z-z^*)+(z-z^*)(iq)=0$, implying $z-z^*=0$.

It follows from \eqref{barnett-lothe-identity} that $\RE z=\pi f_0^{-1}$.
This proves \eqref{z-re-posdef}.

Suppose $\dim M\geq 3$.
Aiming at an indirect proof of \eqref{z-two-eigen},
assume that $z(\xi)$, $\xi\in T_x^*X$, has at most one positive eigenvalue.
Then there exists $w\in \CT_x M$ such that $z(\xi)$ is
negative semidefinite on the orthogonal complement $w^\perp$.
Choose a real vector $v\neq 0$ which is orthogonal to both $\RE w$ and $\IM w$.
Then $v\in w^\perp$, and
$(\RE z(\xi)v|v)=(z(\xi)v|v)\leq 0$, contradicting the positive definiteness of $\RE z$.

Next we prove \eqref{z-radderiv-posdef} following the method 
of \cite[Theorem~2.3]{mielke/fu:04:uniq-surface-wave-speed}.
Since $a_j(\xi)$ is homogeneous of degree $j$ in $\xi$,
equation \eqref{ricatti-z} implies,
\[
(t^{-1}z(t\xi)+ia_1^*(\xi))a^{-1}(t^{-1}z(t\xi)-ia_1(\xi)) = a_2(\xi)-t^{-2}\rho.
\]
Taking the derivative with respect to $t$ at $t=1$, we get
\[
(iq)^*(\dot{z}-z)+(\dot{z}-z)(iq) =2\rho.
\]
By the remarks following \eqref{ricatti-z-prime},
we see that $\dot{z}-z$ is positive definite.

We now prove \eqref{z-conjugate}.
Note $f(s,-\xi)=f(-s,\xi)$, $f_j(-\xi)=(-1)^j f_j(\xi)$, and $a_1(-\xi)=-a_1(\xi)$.
Using \eqref{barnett-lothe-identity} we derive
$z(-\xi)f_0(\xi)=\overline{z(\xi)f_0(\xi)}$.
Since $f_0$ is real and nonsingular the formula follows.

It remains to prove \eqref{z-posdef-large}.
Let $\eta\in T_xX$, $|\eta|=1$.
It suffices to show that $z_\infty=\lim_{t\uparrow\infty} t^{-1}z(t\eta)$ exists and is positive definite.
Set $q_t=t^{-1}q(t\eta)$, $t>1$ large.
From \eqref{factor-with-q} deduce
\[
as^2 +(a_1(\eta)+a_1^*(\eta))s + a_2(\eta) -t^{-2}\rho
  = (s - q^*_t) a(s-q_t),
\quad s\in\R.
\]
Using \eqref{factor-sapolyn-rep} and dominated convergence in the integrals
giving $f_j$ we infer that $q_\infty=\lim_{t\uparrow\infty} q_t$ exists.
In particular, $t^{-1}z(t\eta)$ converges to
$z_\infty=i(a q_\infty+a_1(\eta))$ as $t\uparrow\infty$.
Let $y\in T_xM$ such that $(z_\infty y|y)\leq 0$.
We must show $y=0$.
Set $w(r)=\exp(ir q_\infty)y$, $r\leq 0$.
The solvency equation \eqref{solvency} holds with $q$ replaced by $q_\infty$, $\rho=0$.
Therefore,
$aD_r^2w+(a_1+a_1^*)D_r w+a_2w=0$ holds,
where we use the abbreviation $a_j=a_j(\eta)$.
Take the inner product in $\CT_x M$ with $w$ and integrate.
A partial integration gives
\begin{align*}
\int_{-\infty}^0 & (a D_r w+a_1 w|D_r w) +(D_r w| a_1 w)+(a_2 w|w)\intd r \\
    &= i(aD_r w+a_1w|w)\big|_{-\infty}^0 = (z_\infty y|y) \leq 0.
\end{align*}
Set $W(r)=w(r)\otimes\eta+D_r w(r)\otimes\nu\in \End(\CT_x M)$.
Recall $a=c(\nu)$, $a_1=c(\nu,\eta)$, $a_2=c(\eta)$, and \eqref{C-and-c}.
We have shown:
\[
\int_{-\infty}^0 (W(r)|W(r))_C \intd r \leq 0.
\]
Recall that $C$ is real, and that $(\;|\;)_C$ is
an inner product on symmetric $2$-tensors.
It follows that the symmetrization of $W(r)$ vanishes for all $r\leq 0$.
In particular,
\begin{equation}
\label{eq-for-w-of-r}
\big(w(r)\otimes\eta +D_r w(r)\otimes\nu\mid \zeta\otimes\nu+\nu\otimes\zeta\big)=0
\end{equation}
for $\zeta\in \CT_x^*M$, $r\leq 0$.
Recall $(\eta|\nu)=0$.
Setting $\zeta=\nu$, we derive $D_r(w(r)|\nu)=(D_r w(r)|\nu)=0$.
Since $w(r)\to 0$ as $r\to -\infty$, we obtain $(w(r)|\nu)=0$.
Now, \eqref{eq-for-w-of-r} simplifies to
$(D_r w(r)|\zeta)=0$.
Since $\zeta$ is arbitrary, this implies, successively, $D_r w=0$, $w=0$, $y=0$.
\end{proof}

If $\dim M=3$ then \eqref{hypo-atmost} holds.
This follows from \eqref{z-two-eigen}.

\begin{prop}
\label{prop-radial-line-and-charvar}
Assume \eqref{hypo-atmost}.
Then the characteristic variety of $z$,
$\Sigma=\{\det z(\xi)=0\}$, is a smooth hypersurface in $\EllRegion$.
Each radial line $\R_+\xi\subset T^*X$ intersects $\Sigma$ in at most one point,
and the intersection is transversal.
The kernel of $z\restrict{\Sigma}$ defines a line bundle $\Kernel z\ueber \Sigma$.
Assume, in addition, \eqref{hypo-intersect}.
There is a unique $p\in \Cinfty(T^*X\setminus 0)$,
homogeneous of degree one, such that $\Sigma=p^{-1}(1)$.
Moreover, $p>0$, and $p(-\xi)=p(\xi)$.
\end{prop}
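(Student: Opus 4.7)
The plan is to combine the Hermitian structure of $z$ with the monotonicity already established in Proposition~\ref{prop-z}. First I observe that at each $\xi\in\Sigma$ hypothesis \eqref{hypo-atmost} forces $\Kernel z(\xi)$ to be exactly one-dimensional: by Proposition~\ref{prop-z}\eqref{z-selfadjoint}, $z(\xi)$ is Hermitian, and since $0$ is a non-positive eigenvalue, (U) rules out any further non-positive one. Hence the zero eigenspace is simple and the remaining spectrum is bounded away from $0$. Standard Hermitian perturbation theory therefore provides, in a neighbourhood of each $\xi_0\in\Sigma$, a smooth simple eigenvalue $\lambda(\xi)$ with smooth unit eigenvector $v(\xi)$, and locally $\Sigma=\lambda^{-1}(0)$. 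The line bundle $\Kernel z\ueber\Sigma$ is trivialized by $v$ on such neighbourhoods, and since the fiber dimension is constant these patches define a smooth line bundle globally.

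To see that $\Sigma$ is a smooth hypersurface transverse to radial lines, I would compute the radial derivative of $\lambda$ at $\xi_0$. By first-order perturbation theory and $z(\xi_0)v_0=0$,
\[
\dot{\lambda}(\xi_0)=\big(\dot{z}(\xi_0) v_0 \mid v_0\big) = \big((\dot{z} - z)(\xi_0) v_0 \mid v_0\big) > 0
\]
by Proposition~\ref{prop-z}\eqref{z-radderiv-posdef}. Thus $d\lambda(\xi_0)$ is nonzero, and the radial direction is not tangent to $\Sigma$; in particular $\Sigma$ is a smooth hypersurface in $\EllRegion$ transverse to the radial lines.

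For the uniqueness of $\Sigma\cap\R_+\xi_0$ I would rescale and set $h(t):=z(t\xi_0)/t$ for $t>0$ with $t\xi_0\in\EllRegion$. The chain rule together with the definition of $\dot{z}$ gives $h'(t)=t^{-2}(\dot{z}(t\xi_0)-z(t\xi_0))$, positive definite by \eqref{z-radderiv-posdef}. Hence $h$ is strictly increasing in the Hermitian order, so its smallest eigenvalue $\lambda_{\min}(h(t))$ is strictly increasing in $t$. But $t\xi_0\in\Sigma$ is equivalent to $\lambda_{\min}(h(t))=0$, since (U) forces the zero eigenvalue of $z(t\xi_0)$, when present, to be the unique non-positive one. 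Consequently there is at most one such $t$.

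Finally, assuming \eqref{hypo-intersect}, for every $\xi\in T^*X\setminus 0$ there is a unique $s(\xi)>0$ with $s(\xi)\xi\in\Sigma$, and I would set $p(\xi):=1/s(\xi)$. Positivity and positive homogeneity of degree one are immediate; smoothness of $p$ follows from the implicit function theorem applied to $\det z(s\xi)=0$, the transversality above supplying the nonzero $\partial_s$-derivative. For the symmetry, \eqref{z-conjugate} gives $\det z(-\xi)=\overline{\det z(\xi)}$ and hence invariance of $\Sigma$ under $\xi\mapsto-\xi$; combined with uniqueness of $s(\xi)$ this yields $s(-\xi)=s(\xi)$ and therefore $p(-\xi)=p(\xi)$. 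The main subtle point is the reduction from $\det z=0$ to $\lambda_{\min}(h(t))=0$ in the uniqueness argument, which is precisely where hypothesis \eqref{hypo-atmost} is indispensable.
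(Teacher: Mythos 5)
Your proposal is correct and follows essentially the same route as the paper: Proposition~\ref{prop-z}\eqref{z-radderiv-posdef} supplies the positive radial derivative (you phrase it via the perturbed simple eigenvalue and the monotonicity of $\lambda_{\min}\big(t^{-1}z(t\xi)\big)$, the paper via $(d/dt)\det z(t\xi)>0$ on $\Sigma$), hypothesis \eqref{hypo-atmost} together with selfadjointness gives simplicity of the zero eigenvalue and hence the line bundle, and the implicit function theorem plus Proposition~\ref{prop-z}\eqref{z-conjugate} give smoothness and evenness of $p$. The one point to make explicit is that deducing ``at most one zero'' from strict monotonicity of $\lambda_{\min}$ requires $\{t>0\setof t\xi\in\EllRegion\}$ to be an interval, i.e.\ the connectedness of $\R_+\xi\cap\EllRegion$ coming from the star-shapedness of $\EllRegion$, which the paper invokes explicitly.
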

\begin{proof}
From the assumption and \eqref{z-radderiv-posdef} of Proposition~\ref{prop-z}
it follows that $(d/dt) \det z(t\xi)>0$ if $t\xi\in\Sigma$, $t>0$.
In particular, zero is a regular value of $\det z$.
Hence $\Sigma$ is a codimension one submanifold transversal to the radial field.
Since $\R_+\xi\cap\EllRegion$ is connected, a given radial line $\R_+\xi$
intersects $\Sigma$ in at most one point.
Because of \eqref{hypo-atmost} and the selfadjointness of $z$,
zero is simple eigenvalue of $z$.
It follows that $\Kernel z\ueber\Sigma$ is a line bundle.
Now assume also \eqref{hypo-intersect}.
Then each radial line intersects $\Sigma$ in a unique point.
Define $p$ as follows.
For $0\neq\xi\in T^*X$ set $p(\xi)=1/t$ if $t\xi\in\Sigma$, $t>0$.
Smoothness of $p$ follows from the implicit function theorem.
The evenness of $p$ is a consequence of \eqref{z-conjugate}.
The other properties of $p$ are obvious.
Clearly, the homogeneity and $p\restrict{\Sigma}=1$ determine $p$ uniquely.
\end{proof}

\begin{cor}
\label{cor-homotopy}
Let $\rho_t$ and $C_t$ be homotopies of the mass densities
and the elasticity tensors, $0\leq t\leq 1$.
Assume that the associated surface impedance tensors $z_t$
and their characteristic varieties $\Sigma_t$
satisfy \eqref{hypo-atmost} and \eqref{hypo-intersect} for every $t$.
The line bundles $\Kernel z_0\ueber \Sigma_0$
and $\Kernel z_1\ueber \Sigma_1$ are isomorphic.
\end{cor}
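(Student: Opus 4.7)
The plan is to interpret the two line bundles as restrictions of a single line bundle over $[0,1]\times S^*X$ to its boundary components, and then invoke the homotopy invariance of vector bundles over a compact base. The compactness hypothesis is available because $X$ is compact by assumption (and hence so is the cosphere bundle $S^*X$).

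First, I would reduce both bundles to a common base. By Proposition~\ref{prop-radial-line-and-charvar}, under \eqref{hypo-atmost} and \eqref{hypo-intersect} each radial line in $T^*X\setminus 0$ meets $\Sigma_t$ transversally in a single point. Consequently the restriction of the radial projection $\xi\mapsto \xi/|\xi|$ is a diffeomorphism $\Sigma_t\to S^*X$, with inverse $\eta\mapsto \eta/p_t(\eta)$. Setting
\[
\tilde L_t = \big\{(\eta,v)\in S^*X\times \CT X \setof v\in \Kernel z_t(\eta/p_t(\eta))\big\},
\]
I get a line bundle $\tilde L_t\to S^*X$ which, pulled back along the inverse diffeomorphism, recovers $\Kernel z_t\ueber\Sigma_t$. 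So it suffices to show $\tilde L_0\cong \tilde L_1$.

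Next I would argue that $(t,\eta)\mapsto z_t(\eta/p_t(\eta))$ is smooth on $[0,1]\times S^*X$, so that $\bigsqcup_{t}\tilde L_t$ assembles to a smooth line bundle over $[0,1]\times S^*X$. Smoothness of $t\mapsto z_t$ comes from the integral representations of Lemma~\ref{factor-sapolyn}: the maps $t\mapsto f_{0,t}$ and $t\mapsto f_{1,t}$ are smooth in the coefficients $a_t,a_{1,t},a_{2,t},\rho_t$, so $q_t$ and $z_t=i(a_tq_t+a_{1,t})$ are too. Smoothness of $t\mapsto p_t$ then follows from the implicit function theorem applied to $\det z_t(s\xi)=0$: property~\eqref{z-radderiv-posdef} of Proposition~\ref{prop-z} ensures that the $s$-derivative of $\det z_t(s\xi)$ at the unique zero is nonzero. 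Finally, because $0$ is a simple eigenvalue of each $z_t\restrict{\Sigma_t}$ (again by Proposition~\ref{prop-radial-line-and-charvar}), the orthogonal projection onto $\Kernel z_t(\eta/p_t(\eta))$ is smooth in $(t,\eta)$ by standard perturbation theory for simple eigenvalues of Hermitian families; this produces local smooth unit sections, hence a smooth line bundle structure on the total space.

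With a smooth (or even merely continuous) line bundle $\mathcal L\to [0,1]\times S^*X$ in hand whose restrictions to $\{0\}\times S^*X$ and $\{1\}\times S^*X$ are $\tilde L_0$ and $\tilde L_1$, compactness of $S^*X$ and the standard homotopy invariance of vector bundles over paracompact spaces yield $\tilde L_0\cong \tilde L_1$, and the diffeomorphisms of the first step translate this back to the required isomorphism $\Kernel z_0\ueber\Sigma_0\cong \Kernel z_1\ueber\Sigma_1$. The main obstacle in executing this plan is the smoothness verification in the middle step; once that is granted the rest is formal. If one prefers to avoid smooth perturbation theory, one can use only continuity, since for the homotopy invariance statement topological line bundles suffice.
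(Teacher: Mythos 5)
Your proposal is correct and follows essentially the same route as the paper: identify each $\Sigma_t$ with the (co)sphere bundle via the radial projection of Proposition~\ref{prop-radial-line-and-charvar}, use continuity of $t\mapsto z_t$ (coming from the factorization/integral representation) to see the kernels vary continuously, and conclude by a homotopy-invariance argument. The only cosmetic difference is that you assemble a line bundle over $[0,1]\times S^*X$ and invoke homotopy invariance of bundles directly, where the paper phrases the same step as constancy of the first Chern class in $t$.
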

\begin{proof}
The factorization \eqref{spectral-factorization} and the definition of the
impedance tensor imply that $z_t$ depends continuously on the homotopy parameter $t$.
It follows from Proposition~\ref{prop-radial-line-and-charvar} that the
characteristic varieties are canonically diffeomorphic to the sphere bundle $SX$.
We deduce that the Chern classes of the bundles $\Kernel z_t\ueber SX$ do not depend on $t$.
The assertion follows from this.
\end{proof}

\begin{example}
\label{exa-iso-1}
We consider, as special case, an isotropic elastic medium.
We shall verify \eqref{hypo-atmost}, \eqref{hypo-intersect}, and \eqref{hypo-trivial}.
The elasticity tensor reads, in component notation,
\begin{equation}
\label{C-iso-elast}
C^{ijk\ell} = \lambda g^{ij} g^{k\ell} + \mu(g^{ik} g^{j\ell} + g^{i\ell} g^{jk}),
\end{equation}
where $\lambda, \mu$ denote the Lam\'e parameters.
Equivalently,
\begin{equation}
\label{c-iso}
c(\xi,\eta) = \lambda \xi\otimes \eta +\mu\eta\otimes \xi +\mu g(\xi,\eta)\Id.
\end{equation}
Positive definiteness of $C$ is equivalent to $\mu>0$, $\lambda\dim M+2\mu>0$.
We make the stronger assumption $\lambda,\mu>0$.
Let $\xi\in T_x^*X$.
We list the eigenvalues $s\in\C$ and the eigenvectors $v\in\CT_x M$
of the quadratic polynomial $c(\xi+s\nu)-\rho$:
\begin{compactenum}[(a)]
\item $(\lambda+2\mu)(|\xi|^2+s^2)-\rho=0$ and $v=\xi+s\nu$,
\item $\mu(|\xi|^2+s^2)-\rho=0$ and $v=s\xi-|\xi|^2\nu$,
\item $\mu(|\xi|^2+s^2)-\rho=0$ and $v$ is orthogonal to $\xi$ and $\nu$.
\end{compactenum}
Introduce $c_p=\sqrt{(\lambda+2\mu)/\rho}$ and $c_s=\sqrt{\mu/\rho}$,
the speeds of pressure and of shear waves, respectively.
Assume that $\xi\in\EllRegion$.
This is equivalent to $c_s|\xi|>1$.
The above eigenvalues and eigenvectors diagonalize $q$, $q(\xi)v=sv$ if $\IM s <0$.
Denote $V$ the subbundle of $\pi^*\big(\CT_X M\big)\ueber \EllRegion$
spanned by $\nu$ and $\xi$, and $V^\perp$ its orthogonal bundle.
Fix the orthonormal frame $\nu, \xih =\xi/|\xi|$ of $V$,
and choose an orthonormal frame of $V^\perp$.
In block decompositions of matrices we let
the indices $1$ and $2$ correspond to $V$ and $V^\perp$, respectively.
We denote $(e)_{ij}$ the block $ij$ of the matrix which represents the endomorphism $e$.
Observe that $q$ leaves $V$ and $V^\perp$ invariant, $(q)_{12}=0=(q)_{21}$.
A simple computation gives
\begin{equation}
\label{iq-iso-block}
(i q)_{11} = \frac{|\xi|}{b}
\left[\begin{array}{cc}
ut\sqrt{1-t} & -i(b-ut)\\
i(b-t) & t\sqrt{1-ut}
\end{array}\right].
\end{equation}
Here $t = (c_s |\xi|)^{-2}$, $u=(c_s/c_p)^2=\mu/(\lambda+2\mu)$,
$b=1-\sqrt{1-ut}\sqrt{1-t}$.
Moreover, $(i q)_{22}$ equals $|\xi|\sqrt{1-t}$ times the unit matrix.
The maps $a=c(\nu)$ and $a_1=c(\nu,\xi)$ also leave $V$ and $V^\perp$ invariant.
We compute
\begin{equation}
\label{z-iso-block}
(z)_{11}= \frac{\mu|\xi|}{b}
\left[\begin{array}{cc}
t\sqrt{1-t} & -i(2b-t)\\
i(2b-t) & t\sqrt{1-ut}
\end{array}\right],
\end{equation}
and $(z)_{22}=\mu (iq)_{22}$.
The determinant of $z$ equals $(\mu|\xi|\sqrt{1-t})^{\dim V^\perp}$ times
\begin{equation}
\label{det-z11}
\det (z)_{11} = \mu^2|\xi|^2 b^{-1} \big(4 \sqrt{(1-t)(1-ut)} -(2-t)^2\big).
\end{equation}
Given $u\in]0,1/2[$, the unique zero $t\in]0,1[$
is found as the solution of Rayleigh's cubic equation,
\cite[(24)]{rayleigh:87:surface},
$0 = ((t-2)^4-16(1-t)(1-ut))/t$. 
Define the Rayleigh wave speed $c_r=c_s\sqrt{t}\in\Cinfty(X)$.
Set $p(\xi)=c_r|\xi|$.
The characteristic variety $\Sigma$ equals $\{p(\xi)=1\}$.
Thus \eqref{hypo-atmost} and \eqref{hypo-intersect} hold.
Obviously, $i(2b-t)\nu+t\sqrt{1-t}\,\xih \in\Kernel z(\xi)$, $\xi\in\Sigma$.
Observe that
\begin{equation}
\label{b-on-Sigma}
2(2b-t)=t(2-t)\quad\text{on $\Sigma$.}
\end{equation}
Thus
\begin{equation}
i(2-t)\nu+2\sqrt{1-t}\,\xih \in\Kernel z,\quad t=(c_r/c_s)^2,
\end{equation}
is a nowhere vanishing section of the kernel bundle.
Hence also \eqref{hypo-trivial} holds.
This example is of course well-known.
\end{example}

\begin{remark}
The identity \eqref{barnett-lothe-identity} goes back to
Barnett and Lothe; compare \cite[(3.18)]{lothe/barnett:76:exist-surface-wave}.
It is key to proving, in dimension three,
the uniqueness of subsonic traction-free surface waves
\cite[Theorem~8]{lothe/barnett:85:surface-wave-impedance-method}.
The second assumption in Proposition~\ref{prop-radial-line-and-charvar}
is needed to prove the existence of Rayleigh surface waves.
Compare with \cite[Theorem~12]{lothe/barnett:85:surface-wave-impedance-method},
where existence criteria are given in terms of the so-called
limiting velocity which corresponds to the boundary of the elliptic region.
See \cite[Theorem 2.2]{nakamura:91:rayleigh-pulses} for the
Barnett-Lothe condition in a microlocal setting, and the real principal
type property of the Lopatinski matrix it entails.
See \cite{Tanuma07Stroh} for a recent exposition of Barnett-Lothe theory,
and for a treatment of isotropic and transversely isotropic media.
\end{remark}

\section{Connections and Geometric Symbols}
\label{sect-conn-geosymb}

The elasticity operator is defined in terms of the Levi-Civita connection
and of the elasticity tensor.
We use the geometric \psdiff\ calculus of Appendix~\ref{app-geopsdo-calc}
to define and compute the leading symbol of the elasticity operator.
The leading symbol includes the principal and the subprincipal level.
The calculus depends on the choice of connections.

Equip $M$ with the Levi-Civita connection of $g$.
Let $\exp$ denote its exponential map.
If $x,y\in M$, then denote by $\tofrom{y}{x}$ the shortest geodesic segment from $x$ to $y$,
assuming its interior does not intersect the boundary, and that it is unique.

Let $E\ueber M$ be a (complex) vector bundle with connection $\nabla^E$.
Denote $\transport^E_{\gamma}\in\End(E_x,E_y)$ the parallel transport map
along a given curve $\gamma$ in $M$ from $x$ to $y$, e.g., $\transport^E_{\tofrom{y}{x}}$.
The connection can be recovered from its parallel transport maps:
\begin{equation}
\label{conn-from-transport}
\nabla^E_v s(x)=\frac{\intd}{\intd t}\Restrict{t=0} \transport^E_{\tofrom{x}{\exp_x tv}} s(\exp_x tv).
\end{equation}

Denote $\pi^* E\ueber T^* M$ the pullback of $E$
to the cotangent bundle $\pi:T^* M\to M$.
Let $a$ be a smooth section of $\pi^* E\ueber T^*M$.
Following \cite{Sharaf04geo1,Sharaf05geo2}, we introduce
the vertical and the horizontal covariant derivative of $a$.
The vertical derivative $\verderiv a(x,\xi)\in E_x\otimes T_xM$, at $\xi\in T^*_xM$,
is the derivative of the map $T^*_xM\to E_x$, $\xi\mapsto a(x,\xi)$.
The definition of the vertical derivative depends only on the linear
structure of the fibers of $T^*M$.
The horizontal derivative $\horderiv a(x,\xi)\in E_x\otimes T^*_xM$
is the derivative at $v=0$ of a map $T_x M\to E_x$,
\begin{equation}
\label{def-horderiv}
\horderiv a(x,\xi) = \frac{\partial}{\partial v}\Restrict{v=0}
   \transport^{E}_{\tofrom{x}{\exp_x v}} a(\exp_x v, \transport^{T^*M}_{\tofrom{\exp_x v}{x}}\xi).
\end{equation}
The horizontal derivative depends on the Riemannian structure and on the connection $\nabla^E$.
In the scalar case, $E=\C$, in local coordinates,
\[
\horderiv a(x,\xi) = \big(\partial_{x_j}a(x,\xi)
    +\Gamma^k_{ij}(x)\xi_k \partial_{\xi_i}a(x,\xi)\big)\intd x^j,
\]
where $\Gamma^k_{ij}$ denote the Christoffel symbols of the Levi-Civita connection.
Writing a local section of $\pi^* E$ as a sum of products $a_1(x,\xi)a_2(x)$
where $a_1$ is scalar and $a_2$ a section of $E$ one readily derives local formulas for
the horizontal derivative in terms of connection coefficients.
The vertical and the horizontal derivative extend to first order differential operators,
$\verderiv$ and $\horderiv$, which map sections of $\pi^* (E\otimes T^{r,s}M)$
to sections of $\pi^*(E \otimes T^{r+1,s}M)$ and of $\pi^*(E \otimes T^{r,s+1}M)$,
respectively.
The operators $\verderiv$ and $\horderiv$ commute.
It suffices to prove this when $E$ is the trivial line bundle, $E=\C$.
In this case the assertion is easily checked in normal coordinates.

Let $F\ueber M$ be another vector bundle.
Let $A:\Cinfty(M;E)\to\Cinfty(M;F)$ be a differential operator of order $m$.
We introduce a small parameter, $0<\h\leq 1$,
and we replace $A$ by the $\h$-differential operator $\h^mA$.
Then $A\in\Pskm{0}{m}(M;E,F)$ as a semiclassical (pseudo-)differential operator.
Refer to Appendix~\ref{app-geopsdo-calc} for an exposition of Sharafutdinov's
geometric \psdiff\ calculus in a semiclassical setting.
The formula~\eqref{testing-geom-symb} for the geometric symbol,
$\sigma_\h(A)\in\Sykm{0}{m}$, simplifies to
\begin{equation}
\label{def-geosymb-diffop}
\sigma_\h(A)(x,\xi)s = A_y\big(e^{i\langle\xi,\exp_x^{-1} y\rangle/\h}
               \transport^E_{\tofrom{y}{x}} s\big)\Restrict{y=x},
\end{equation}
where $\xi\in T^*_x M$, $s\in E_x$, and $i=\sqrt{-1}$.
The geometric symbol extends by continuity to the boundary of $M$.
In symbol computations we track the leading geometric symbol,
defined before Proposition~\ref{geocalc-adjoint}.
In the following, the symbol of an operator is always its geometric symbol.

For the Laplace-Beltrami operator one has $\sigma_\h(-\h^2\Delta)(x,\xi)=|\xi|^2$.
This is readily checked using normal coordinates.

From \eqref{def-geosymb-diffop} and \eqref{conn-from-transport} deduce
\begin{equation}
\label{geosymb-of-connection}
\sigma_\h(-i\h\nabla^E)(\xi)e=e\otimes \xi\in E_x\otimes T^*_xM.
\end{equation}
As before, to ease notation, we usually do not write the base point $x$
into the arguments of tensors and symbols.

If $E\ueber M$ is a Hermitian vector bundle then we define,
using the volume element $\dvol_M$, the Hilbert space $L^2(E)$.
Assume $E$ and $F$ are Hermitian vector bundles having metric connections.
The leading symbol of the formal adjoint $A^*$ of $A$ is given by
\begin{equation}
\label{maingeosymb-adjoint}
\sigma_\h(A^*) \equiv \sigma_\h(A)^* -i\h \trace\big(\verderiv\horderiv\sigma_\h(A)^*\big).
\end{equation}
See Proposition~\ref{geocalc-adjoint}.

Equip the bundle $E\otimes T^*M$ with the induced Hermitian structure
and the induced connection. The connection is metric.
Observe that the horizontal derivative of $\sigma_\h(-i\h\nabla^E)^*$ vanishes.
Therefore, \eqref{maingeosymb-adjoint} and \eqref{geosymb-of-connection} imply
\begin{equation}
\label{geosymb-of-adj-connection}
\sigma_\h\big((-i\h\nabla^E)^*\big)(\xi)(e\otimes\eta)= g(\xi,\eta) e,
\quad \xi,\eta\in T_x^* M, e\in E_x.
\end{equation}

By Proposition~\ref{geocalc-compos}
the leading symbol of a composition is given as follows:
\begin{equation}
\label{maingeosymb-compos}
\sigma_\h(AB) \equiv \sigma_\h(A)\sigma_\h(B) -i\h \trace\big(\verderiv\sigma_\h(A).\horderiv\sigma_\h(B)\big).
\end{equation}
The trace is the contraction of the $TM\otimes T^*M$ factor
which is produced by a pair of vertical and horizontal derivatives.
The dot terminates a differentiated expression, serving as a closing bracket.

Let $C\in\Cinfty(M;\End(E\otimes T^*M))$.
View $C$ as an operator which acts by multiplication on
sections of the bundle $E\otimes T^*M\ueber M$.
Let $\nabla$ denote the connection on the bundle $\End(E\otimes T^*M)\ueber M$
induced from the Levi-Civita connection and from $\nabla^E$.
Define sections $c, \divergence c$ of $\pi^*\End(E)\ueber T^*M$ as follows:
\begin{align*}
c(\xi)e&=\langle\xi, \pi^* C(e\otimes\xi)\rangle, \\
(\divergence c)(\xi)e&=\sum_j \langle\eta^j,(\pi^* \nabla_{v_j} C)(e\otimes\xi)\rangle,
\end{align*}
where the angular brackets denote contractions on covectors, using $g$.
Furthermore, $(v_j)$ and $(\eta^j)$ are any dual frames of $TM$ and $T^*M$.

\begin{lemma}
\label{symbol-of-nablaadj-C-nabla}
$\sigma_\h(-\h^2 {\nabla^E}^*\circ C\circ \nabla^E)=c - i\h \divergence c+\bigoh(\h^2)$.
\end{lemma}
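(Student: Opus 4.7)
The plan is to factor
\[
-\h^2{\nabla^E}^*\circ C\circ \nabla^E = (-i\h\nabla^E)^*\circ C\circ (-i\h\nabla^E)
\]
and apply the composition formula \eqref{maingeosymb-compos} twice, using the closed-form symbols \eqref{geosymb-of-connection} and \eqref{geosymb-of-adj-connection}. Since $C$ acts by fiberwise multiplication, its geometric symbol is $C$ itself, which is $\xi$-independent; hence $\verderiv \sigma_\h(C)=0$, and this kills half of the potential subprincipal corrections.

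First I would compose $C$ with $-i\h\nabla^E$. Because $\verderiv \sigma_\h(C)=0$, the $\bigoh(\h)$ correction in \eqref{maingeosymb-compos} vanishes, so $\sigma_\h(C\circ(-i\h\nabla^E))(\xi)e \equiv C(e\otimes\xi)$ modulo $\bigoh(\h^2)$. Composing from the left with $(-i\h\nabla^E)^*$ and using \eqref{geosymb-of-adj-connection}, the principal piece is
\[
\sigma_\h\big((-i\h\nabla^E)^*\big)(\xi)\big(C(e\otimes\xi)\big) = \langle\xi,C(e\otimes\xi)\rangle = c(\xi)e,
\]
which reproduces the stated leading term.

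For the $\bigoh(\h)$ correction I would evaluate the two factors of $\trace\big(\verderiv\sigma_\h((-i\h\nabla^E)^*).\horderiv\sigma_\h(C\circ(-i\h\nabla^E))\big)$ separately. The symbol of the adjoint is $(e'\otimes\eta')\mapsto g(\xi,\eta')e'$, linear in $\xi$, so its vertical derivative at $\xi$ is the $\xi$-independent element of $\Hom(E\otimes T^*M,E)\otimes TM$ given by $(e'\otimes\eta')\otimes v\mapsto g(v,\eta')e'$. For the other factor, $\sigma_\h(C\circ(-i\h\nabla^E))(\xi)e=C(e\otimes\xi)$ depends on the base point only through $C$, since the tensor $e\otimes\xi$ is produced by parallel transport; in dual frames $(v_j),(\eta^j)$ this gives
\[
\horderiv\sigma_\h(C\circ(-i\h\nabla^E))(\xi)e = \sum_j (\nabla_{v_j}C)(e\otimes\xi)\otimes\eta^j.
\]
Contracting the $TM\otimes T^*M$ slots produced by $\verderiv$ and $\horderiv$ yields
\[
\sum_j \langle\eta^j,(\nabla_{v_j}C)(e\otimes\xi)\rangle = (\divergence c)(\xi)e,
\]
and multiplying by $-i\h$ from \eqref{maingeosymb-compos} completes the proof.

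The main bookkeeping hurdle is aligning the $TM\otimes T^*M$ pairing built into the composition formula with the contraction used in the definition of $\divergence c$; once that identification is made the result is immediate. The $\bigoh(\h^2)$ remainder is automatic because the two symbols being composed are polynomials of low degree in $\xi$, so only the two terms exhibited above survive to the required order.
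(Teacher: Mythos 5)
Your proof is correct and follows essentially the same route as the paper: factor the operator as $(-i\h\nabla^E)^*\circ C\circ(-i\h\nabla^E)$ and apply the composition formula \eqref{maingeosymb-compos} twice, using the closed-form symbols \eqref{geosymb-of-connection} and \eqref{geosymb-of-adj-connection} together with $\sigma_\h(C)=\pi^*C$, $\horderiv\pi^*C=\pi^*\nabla C$. The only difference is the grouping: the paper composes $(-i\h\nabla^E)^*\circ C$ first, picking up the $\divergence c$ correction there (and gets no further correction because $\horderiv$ of $e\otimes\xi$ vanishes), whereas you compose $C\circ(-i\h\nabla^E)$ first (no correction since $\verderiv\pi^*C=0$) and collect the same $-i\h\divergence c$ term in the second composition, yielding the identical leading symbol.
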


\begin{proof}
Observe that $\sigma_\h(C)=\pi^* C$, and $\horderiv \pi^* C=\pi^* \nabla C$.
The symbol \eqref{geosymb-of-adj-connection} is linear in $\xi$.
Its vertical derivative is obvious. 
Using \eqref{maingeosymb-compos}, the symbol $a$ of $-i\h{\nabla^E}^*\circ C$ is found to be
\[
a(\xi)(e\otimes\eta) = \langle\xi, \pi^* C(e\otimes\eta)\rangle
      -i\h \sum_j \langle\eta^j  , \pi^* \nabla_{v_j} C (e\otimes\eta)\rangle.
\]
Here $(v_j)$ and $(\eta^j)$ are as in the definition of $\divergence c$.
The horizontal derivative of the symbol of $-i\h\nabla^E$ vanishes.
Therefore,
\[
\sigma_\h(-\h^2 {\nabla^E}^*\circ C\circ \nabla^E)(\xi)e
  = a(\xi) \sigma_\h(-i\h\nabla^E)(\xi)e = a(\xi)(e\otimes\xi),
\]
where we used \eqref{maingeosymb-compos}.
\end{proof}

Now assume $E=\CT M$ and $C$ the elasticity tensor.
Identify
\[
\End(\CT^{0,2}M)=\End(\CT M\otimes \CT^*M).
\]
Let $L$ the elasticity operator defined in \eqref{def-elast-traction}.
Recall from Riemannian geometry the following relation between the
Levi-Civita connection and the Lie derivative:
\begin{equation}
\label{geom-Liederiv-covderiv}
(\Lie_u g)(v,w)= g(\nabla_v u, w)+g(v,\nabla_w u),
\end{equation}
for (real) vector fields $u,v,w$.
Using the symmetries of the elasticity tensor we get
\[
L=\Def^* \circ C \circ \Def = (-i\nabla)^* \circ C \circ (-i\nabla).
\]
We obtain the following corollary to Lemma~\ref{symbol-of-nablaadj-C-nabla}:
\begin{equation}
\label{symb-L}
\sigma_\h(\h^2 L-\rho)=c -\rho\Id - i\h \divergence c+\bigoh(\h^2).
\end{equation}
If $C^{ijk\ell}$ respresent $C$ with respect to some given local coordinates,
then \eqref{symb-L} reads
\[
{\sigma_\h(\h^2 L-\rho)(\xi)}^{ik}
  = C^{ijk\ell}\xi_j \xi_\ell -\rho\delta^{ik}
     -\sqrt{-1} h C^{ijk\ell}_{\phantom{ijk\ell}|j} \xi_\ell
     +\bigoh(\h^2).
\]
The vertical bar followed by $j$ means covariant differentiation with respect to the $j$-th coordinate.
If the elastic medium is isotropic then the leading symbol becomes
\begin{equation}
\label{symb-L-iso}
\begin{aligned}
\sigma_\h(\h^2 L-\rho)(\xi)
     &\equiv \rho(c_p^2|\xi|^2 - 1) P(\xi) +\rho(c_s^2 |\xi|^2 -1)(\Id-P(\xi))\\
  &\phantom{==} -i\h \big(\nabla\lambda\otimes \xi +(\nabla\mu\otimes \xi)^*
                  +\langle\xi,\nabla\mu\rangle \Id\big),
\end{aligned}
\end{equation}
where $P(\xi)=\xih \otimes\xih$ denotes the orthogonal projection
to the propagation direction $\xih =\xi/|\xi|$.

\section{The Elasticity Operator in a Boundary Collar}
\label{sect-collar}

In a boundary collar, $]-\eps,0]\times X\subset M$,
we write the elasticity operator $L$ in terms differential operators on $X$
having coefficients which depend on $r\in I$, the negative distance to $X$.

Let $N(x)\in T_x M$ denote the unit exterior normal at $x\in X$.
There exists $\eps>0$ such that, if we set $I=]-\eps,0]$,
the exponential map of the Levi-Civita connection defines
a diffeomorphism onto a neighbourhood of $X$ in $M$:
\[
I\times X\to M,\quad (r,x)\mapsto y=\exp(rN(x)).
\]
Essentially without losing generality, we assume that this map is onto $M$.
The inverse map is $y\mapsto(r,x)$, where $-r=d(y,X)$ is the distance from $y$ to $X$,
and $x=p(y)$ is the unique point in $X$ closest to $y$.
The distance function $r$ satisfies the (eikonal) equation $|\nabla r|=1$ in $M$.
Extend $N$ to $M$ by $N=\nabla r$.
Also introduce the unit conormal field $\nu=\intd r$.
The level hypersurfaces
\[ M_r=\{y\in M\setof r+d(y,X)=0\} \]
are diffeomorphic to $X=M_0$.
The shape operator $S=\nabla N$ is a field of symmetric endomorphisms of $TM$, $g(Su,v)=g(u,Sv)$.
The second fundamental forms of the level hypersurfaces $M_r$
assign $(u,v)\mapsto -g(Su,v)$ (Weingarten equation).
The dependency of the metric tensor on $r$ is given by
the formula $(\Lie_{N} g)(v,w)=2g(Sv,w)$.
This formula follows from \eqref{geom-Liederiv-covderiv}.
Introduce $J\in\Cinfty(I_r\times X)$, the solution of
$\partial_r \log J= \trace S$, $J\restrict{r=0}=1$.
Then we have the following formula for the volume form of $M$:
\begin{equation}
\label{def-J}
\int_M f(y)\dvol_M(y) = \int_I\int_X f(\exp(rN(x)))J(r,x)\dvol_X(x)\intd r,
\end{equation}
$f\in\Ccinfty(M)$.
See \cite[Ch.~2]{petersen:98:riemannian-geometry} for
the geometry of hypersurfaces using distance functions.

Let $E\ueber M$ be a vector bundle with connection $\nabla^E$.
Denote $E_r\ueber M_r$ the bundles induced by the inclusions $M_r\subset M$, $r\in I$.
Set $E_X=E_0$.
Let $u\in\Cinfty(M;E)$ be a section of $E$.
Using parallel transport in $E$ along the geodesics which intersect the
boundary orthogonally, define $\widetilde{u}:I\to\Cinfty(X;E_X)$,
\[
\widetilde{u}(r)(x)=\widetilde{u}(r,x)= \transport^E_{\tofrom{x}{y}} u(y),
\quad\text{if $y=\exp(rN(x))$.}
\]
The map
\begin{equation}
\label{isom-Frechet-spaces}
\Cinfty(M;E)\to\Cinfty(I,\Cinfty(X;E_X)), \quad u\mapsto\widetilde{u},
\end{equation}
is an isomorphism of Fr\'echet spaces.
The isomorphism commutes with bundle operations
such as tensor products and contractions.

The covariant derivative in normal direction is transformed into $\partial_r$ under
the above isomorphism:
\begin{equation}
\label{normal-cov-derivative}
\widetilde{\nabla^E_N u}(r) =\partial_r \widetilde{u}(r),\quad r\in I.
\end{equation}
To see this,
consider the geodesic $I\to M$, $r\mapsto y(r)=\exp(rN(x))$.
The tangent vectors are $\dot{y}(r)=N(y(r))$.
Using \eqref{conn-from-transport}, it follows that
\begin{align*}
(\nabla^E_{N(y(r))} u)(y(r)) 
  &= \frac{d}{ds}\Restrict{s=r} \tau^E_{\tofrom{y(r)}{y(s)}} u(y(s)) \\
  &= \tau^E_{\tofrom{y(r)}{x}} \, \frac{d}{ds}\Restrict{s=r} \widetilde{u}(s,x).
\end{align*}
This implies \eqref{normal-cov-derivative}.
We have $\nabla_N N=SN=0$.
It follows that $\partial_r\widetilde{N}=0$, and $\partial_r\widetilde{\nu}=0$.
Abusing notation, we write $\partial_r$ to denote $\nabla^E_N$.

Define $\enu e=e\otimes \nu$ and $\inu (e\otimes \eta) = \langle \eta,\nu\rangle e$.
Notice that $\enu$ and $\inu$ commute with $\partial_r$.

Let $F\ueber M$ be a another vector bundle with a connection.
Let $B:\Cinfty(M;E)\to\Cinfty(M;F)$ be a differential operator.
Assume that $B$ is tangential.
This means, by definition, that $B$ commutes with the distance function $r$, $[B,r]=0$.
Then, for every $r\in I$, $B$ restricts to an operator
$B_r:\Cinfty(M_r;E_r)\to\Cinfty(M_r;F_r)$, $B_r U = (Bu)\restrict{M_r}$,
where $u$ is a section of $E\ueber M$ which extends a given section $U$ of $E_r\ueber M_r$.
The assumption $[B,r]=0$ implies that $B_r$ is well-defined.
Parallel transport along the geodesics orthogonal to $X$ defines
bundle isomorphisms $E_r\cong E_X$ and $F_r\cong F_X$.
Via these isomorphisms the $B_r$'s induce differential operators
$B(r):\Cinfty(X;E_X)\to\Cinfty(X;F_X)$, called associated with $B$, such that
$\widetilde{Bu}(r)= B(r)\widetilde{u}(r)$, $r\in I$.
Each $B(r)$ is a differential operator having coefficients which are $\Cinfty$
with respect to $r$.
Conversely, an operator $B$ is tangential if it is given in this way
by a family of differential operators $\{B(r)\setof r\in I\}$
with coefficients depending smoothly on $r$.

\begin{lemma}
\label{lemma-covderiv-tilde}
Let $E\ueber M$ be a real vector bundle with connection $\nabla^E$.
Then
\begin{equation}
\label{eq-covderiv-tilde}
\nabla^E = \enu \partial_r + B,
\end{equation}
where $B$ is tangential.
Moreover, $B(0) = \nabla^{E_X}$.
\end{lemma}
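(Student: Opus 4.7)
The plan is to take $B := \nabla^E - \enu \partial_r$, with $\partial_r$ meaning $\nabla^E_N$ as in the paper's notation, and verify three things: that $B$ takes values in the subbundle $E \otimes \nu^\perp$, that $B$ is tangential in the sense $[B,r]=0$, and that $B(0) = \nabla^{E_X}$. The first is immediate from the $g$-orthogonal decomposition $T^*M = \R\nu \oplus \nu^\perp$ along each level hypersurface $M_r$: writing $\nabla^E u = \enu \nabla^E_N u + (\text{part in } E\otimes\nu^\perp)$, subtracting $\enu\partial_r u = \enu \nabla^E_N u$ leaves exactly the tangential-valued remainder.

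For tangentiality, I would do the short Leibniz computation. Since $dr = \nu$, $[\nabla^E, r] u = u\otimes dr = \enu u$, i.e.\ $[\nabla^E,r] = \enu$. On the other hand $\partial_r r = 1$ gives $[\enu\partial_r, r] = \enu[\partial_r,r] = \enu$. Subtracting, $[B,r]=0$, so $B$ commutes with multiplication by $r$ and therefore restricts to each $M_r$ as stated in the paper. Under the parallel-transport trivializations $E_r \cong E_X$ and $T^*M_r \cong T^*X$ introduced via \eqref{isom-Frechet-spaces}, this produces the smooth family $B(r) : \Cinfty(X;E_X) \to \Cinfty(X;E_X\otimes T^*X)$.

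It remains to identify $B(0)$ with $\nabla^{E_X}$. Here the parallel-transport identifications at $r=0$ are the identity, so for $u \in \Cinfty(X;E_X)$ and any extension $\tilde u \in \Cinfty(M;E)$, one has $B(0) u = (B\tilde u)\restrict{X}$ viewed in $E_X \otimes \nu^\perp\restrict{X} = E_X \otimes T^*X$. But the induced connection on the restriction bundle $E_X = E\restrict{X}$ is precisely characterized by $(\nabla^{E_X}_v u)(x) = (\nabla^E_v \tilde u)(x)$ for $x \in X$ and $v \in T_x X$, i.e.\ by the tangential part of $\nabla^E \tilde u\restrict{X}$; this is exactly what $B(0)$ extracts. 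The only step requiring a pinch of care is the bookkeeping of the parallel-transport identifications that turn $B$ into $\{B(r)\}$, but since these reduce to the identity at $r=0$, no genuine obstacle arises.
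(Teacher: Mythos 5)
Your proposal is correct and follows essentially the same route as the paper: you define $B$ as the tangential ($\nu^\perp$-valued) part of $\nabla^E$, check tangentiality, and identify $B(0)$ with the induced connection on $E_X$, the only difference being that you spell out the Leibniz computation $[\nabla^E,r]=\enu=[\enu\partial_r,r]$ where the paper simply notes that $B$ is tangential. No gaps.
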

Here $E\otimes T^*M$ carries the induced connection.
The lemma extends, by decomposition into real and imaginary parts, to
complexifications of real bundles with connections.
In particular, it holds for complexified tensor bundles with the Levi-Civita connection.

\begin{proof}
Let $P^\perp,P^\parallel\in\Cinfty(M;\End(TM))$ denote the orthogonal projectors
onto the span of $N$ and onto its orthogonal complement, $N^\perp$, respectively.
Identify $E\otimes T^*M$ with $\Hom(TM,E)$.
Let $u\in\Cinfty(M;E)$.
We have the following decomposition in $\Cinfty(M;\Hom(TM,E))$:
\[
\nabla^E u= (\nabla^E u)P^\perp + (\nabla^E u)P^\parallel
= (\nabla^E_N u)\otimes\nu + Bu.
\]
This defines $B$, and implies \eqref{eq-covderiv-tilde}.
Note that $B$ is tangential.
We have
\[
B(0)\widetilde{u}(0) = Bu\restrict{X}
   =\big((\nabla^E u)P^\parallel\big)\restrict{X}
   =\big(\nabla^{E_X} (u\restrict{X})\big)\big(P^\parallel\restrict{X}\big).
\]
This proves the asserted formula for $B(0)$.
\end{proof}

Assume $E\ueber M$ a Hermitian bundle with a metric connection.
Using~\eqref{def-J}, and the fact that parallel transport preserves inner products,
we have
\begin{equation}
\label{Ltwo-prod-and-tilde}
\int_M (u|v)_E \dvol_M = \int_I \int_X (\widetilde{u}|\widetilde{v})_{E_X}J\dvol_X\intd r,
\end{equation}
if $u,v\in\Ccinfty(M;E)$.
Formal adjoints of differential operators on $M$ are taken with respect
to these inner products.
The inner product of sections $u$ and $v$ of $E_X\ueber X$ is
$\int_X (u|v)_{E_X} \dvol_X$.
Formal adjoints of operators $A(r)$ associated with a tangential operator
$A$ are defined with respect to this inner product.

Next we prove a formula which expresses the elasticity operator $L$
as a quadratic polynomial in $D_r=-i\partial_r$ with tangential coefficients.
Now assume $E=\CT M$, and let $B$ as in \eqref{eq-covderiv-tilde}.
Define tangential operators,
\begin{equation*}
A_0=\inu C\enu,\quad
A_1=-i\inu C B, \quad
A_2=B^*CB.
\end{equation*}
The order of $A_j$ is $j$.
Moreover, $A_1^*=iB^*C\enu$.
\begin{prop}
\label{prop-rep-elast-op}
The elasticity and traction operators defined in \eqref{def-elast-traction}
are as follows:
\begin{align*}
L&=(D_r -i \trace S)(A_0 D_r + A_1) + A_1^* D_r + A_2, \\
-iT&=A_0(0) D_r + A_1(0).
\end{align*}
Furthermore, $A_1^*(0)=A_1(0)^*$.
\end{prop}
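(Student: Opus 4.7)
The plan is to derive both formulas from the defining identity \eqref{def-elast-traction}. Since $C$ vanishes on antisymmetric tensors, the quadratic form reduces to $\int_M (\Def u|\Def v)_C\dvol_M = \int_M (C\nabla u|\nabla v)\dvol_M$. By Lemma~\ref{lemma-covderiv-tilde}, in the collar $\nabla = \enu \partial_r + B$ with $B$ tangential; writing $\partial_r = iD_r$ this becomes $\nabla = \enu(iD_r) + B$.

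Next I would substitute this decomposition into the bilinear integrand and expand into four terms, using $\enu^{*}=\inu$ on $\CT M\otimes\CT^{*}M$ and the relations
\[
\inu C\enu = A_0,\quad \inu CB = iA_1,\quad B^{*}C\enu = -iA_1^{*},\quad B^{*}CB = A_2.
\]
Applying \eqref{Ltwo-prod-and-tilde} and suppressing the tildes coming from the parallel-transport identification, one obtains
\[
\int_M (C\nabla u|\nabla v)\dvol_M = \int_I\!\int_X \bigl[(A_0 \partial_r u|\partial_r v)+(iA_1 u|\partial_r v)+(-iA_1^{*}\partial_r u|v)+(A_2 u|v)\bigr] J\dvol_X\intd r.
\]

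Now I would integrate by parts in $r$ on the two terms carrying $\partial_r v$. Using $\partial_r \log J = \trace S$, the weighted formula
\[
\int_{-\eps}^{0}(f|\partial_r v) J\intd r = (f(0)|v(0)) - \int_{-\eps}^{0}((\partial_r +\trace S)f|v) J\intd r
\]
applied with $f = A_0 \partial_r u$ and $f = iA_1 u$ yields the boundary contribution $(A_0(0)\partial_r u(0)+iA_1(0)u(0)|v(0)) = (i(A_0(0)D_r + A_1(0))u|v)|_X$. Matching this against the right-hand side of \eqref{def-elast-traction} gives $-iT = A_0(0) D_r + A_1(0)$. The remaining bulk integrand, after rewriting $-(\partial_r + \trace S) = -i(D_r - i\trace S)$ and collecting the factors of $i$, becomes $(D_r - i\trace S)(A_0 D_r + A_1) + A_1^{*} D_r + A_2$, which by comparison with $\int_M(Lu|v)\dvol_M$ yields the desired formula for $L$.

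Finally, $A_1^{*}(0)=A_1(0)^{*}$ holds because the associated operator at level $r$ of the $M$-adjoint $A_1^{*}$ is the formal adjoint of $A_1(r)$ with respect to the weighted inner product $J(r)\dvol_X$ on $X$; since $J(0)=1$, at $r=0$ this reduces to the formal adjoint with respect to $\dvol_X$. The main technical obstacle will be the bookkeeping of the factors of $i$ coming from $D_r = -i\partial_r$ together with the $J$-weighted sign corrections from integration by parts that produce the $-i\trace S$ term.
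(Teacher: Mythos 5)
Your proposal is correct and follows essentially the same route as the paper: reduce the quadratic form to $\int_M(C\nabla u|\nabla v)\dvol_M$, insert the collar decomposition $\nabla=\enu\partial_r+B$ of Lemma~\ref{lemma-covderiv-tilde}, integrate by parts in $r$ against the $J$-weight via \eqref{Ltwo-prod-and-tilde} (which produces the $-i\trace S$ term), and read off the bulk and boundary contributions. Your justification of $A_1^*(0)=A_1(0)^*$ via the $J(r)\dvol_X$-weighted adjoint with $J(0)=1$ is exactly the paper's (terser) argument, just spelled out.
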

\begin{proof}
Let $u,v\in\Ccinfty(M;\CT M)$.
It follows from \eqref{geom-Liederiv-covderiv}
and the symmetry properties of the elasticity tensor that
\[
\int_M \big(\Def u\mid\Def v\big)_C\dvol_M
  =\int_M \big(C\nabla u\mid\nabla v\big)\dvol_M.
\]
Inserting \eqref{eq-covderiv-tilde} and using the definition of $A_j$,
the right-hand side equals
\begin{align*}
\int_M & (\inu C\nabla u|\partial_r v)\dvol_M
     +\int_M (B^* C\nabla u|v)\dvol_M \\
   &= \int_M (A_0\partial_r u+iA_1 u|\partial_r v)\dvol_M
     +\int_M (-iA_1^* \partial_r u+ A_2 u|v)\dvol_M
\end{align*}
Partial integration with respect to $r$ is done,
using \eqref{Ltwo-prod-and-tilde}, as follows:
\begin{align*}
\int_M & (w|\partial_r v)\dvol_M
   = \int_I\int_X (\widetilde{w}|\partial_r \widetilde{v})J\dvol_X \intd r \\
  &= \int_X \big(w(0)|v(0)\big)\dvol_X
     - \int_I\int_X\big((\partial_r\log J)\widetilde{w}+\partial_r\widetilde{w}|\widetilde{v}\big)J\dvol_X\intd r.
\end{align*}
Summing up we have
\begin{align*}
\int_M & \big(\Def u \mid\Def v\big)_C\dvol_M \\
   &= \int_M \big((D_r-i\trace S)(A_0 D_r+A_1)u +A_1^*D_r u+A_2 u\mid v\big)\dvol_M \\
   &\phantom{==} + \int_X \big(A_0(0)(\partial_r u)(0) +iA_1(0)u(0)\mid v(0)\big)\dvol_X.
\end{align*}
Comparing with \eqref{def-elast-traction} the formulas for $L$ and $T$ follow.
The last assertion follows because $J=1$ at $X$.
\end{proof}

Next we compute the leading symbols of the operators (associated with) $A_j$.
The symbols are $r$-dependent sections of $\pi^* \End(\CT_X M)\ueber T^*X$.
Dropping tildes, the symbol of $A_0$ equals
\[
\sigma_\h(A_0)= a=c(\nu)\in\Cinfty(I,\Cinfty(T^*X;\pi^* \End(\CT_X M)).
\]
Introduce the divergence of the acoustic tensor restricted to $X$ as follows:
\[
(\divergence_X c)(\zeta)v=\sum\nolimits_\alpha \langle\eta^\alpha,(\pi^* \nabla_{v_\alpha} C)(v\otimes\zeta)\rangle,
\]
if $\zeta\in T_x^*M$, $v\in T_xM$, $x\in X$.
Here $(v_\alpha)$ and $(\eta^\alpha)$ are any dual frames of $TX$ and $T^*X$.
If local coordinates are chosen such that $r$ is one coordinate and the other
coordinates are constant along the geodesics orthogonal to $X$, then
$(\divergence_X c)(\zeta)^{ik} = C^{i\alpha k\ell}_{\phantom{i\alpha k\ell}|\alpha}\zeta_\ell$.
Here the summation convention is used with Latin indices refering to all coordinates,
and Greek refering to all coordinates except $r$.
We also need the contraction $\langle C, S\rangle\in\Cinfty(M;\End(TM))$
of the elasticity tensor with the shape operator, in coordinates,
\[
{\langle C, S\rangle}^{ik} = C^{ijk\ell} S_{j\ell}, \quad S_{j\ell} = \nu_{j|\ell}.
\]
(Because of $\nabla_N S=0$ one can also write Greek coordinates instead of $j$ and $\ell$.)
\begin{lemma}
\label{lemma-a12-minus}
Let $a_1$ and $a_2$ denote the principal symbols of the $\h$-differential
operators $\h A_1$ and $\h^2A_2$, respectively.
At $r=0$: $a_1(\xi)=c(\nu,\xi)$, and $a_2(\xi)=c(\xi)$.
On the leading symbol level,
$\sigma_\h(\h A_1)=a_1$,
$\sigma_\h(\h A_1^*)=a_1^*-i\h a_{1-}$,
and $\sigma_\h(\h^2A_2)=a_2-i\h a_{2-}+\bigoh(\h^2)$,
where, at $r=0$,
\[
a_{1-} =(\divergence_X c)(\nu) + \pi^*\langle C, S\rangle,
\quad
a_{2-}(\xi) =(\divergence_X c)(\xi).
\]
\end{lemma}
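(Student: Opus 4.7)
The plan is to extract each symbol by combining the identification of the operators at $r=0$ given by Lemma~\ref{lemma-covderiv-tilde} with the adjoint and composition formulas of the geometric calculus.

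By Lemma~\ref{lemma-covderiv-tilde}, $B(0)=\nabla^{E_X}$ on the bundle $E_X=\CT_X M$; since $J(0)=1$, the boundary adjoint coincides with the pointwise one. Thus $A_1(0)=-i\inu C\nabla^{E_X}$, $A_1^*(0)=i(\nabla^{E_X})^*C\enu$, and $A_2(0)=(\nabla^{E_X})^*C\nabla^{E_X}$. The symbol $\sigma_\h(-i\h\nabla^E)(\xi)v=v\otimes\xi$ from \eqref{geosymb-of-connection} has vanishing horizontal derivative, as can be read off directly from \eqref{def-horderiv}: under parallel transport of $\xi$ from $x$ to $\exp_x v$ and back, the value $e\otimes\xi$ is independent of $v$. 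Applying \eqref{maingeosymb-compos} to the product $(-i\inu C)\cdot\h\nabla^{E_X}$ therefore produces no subprincipal correction and yields $a_1(\xi)=\inu C(\,\cdot\otimes\xi)=c(\nu,\xi)$. The values $a_2(\xi)=c(\xi)$ and $a_{2-}(\xi)=(\divergence_X c)(\xi)$ follow from Lemma~\ref{symbol-of-nablaadj-C-nabla}, transferred to $X$ and applied to $A_2(0)$.

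For $a_{1-}$, the adjoint formula \eqref{maingeosymb-adjoint} applied to $\sigma_\h(\h A_1)\equiv a_1$ gives
\[
\sigma_\h(\h A_1^*)\equiv a_1^*-i\h\,\trace(\verderiv\horderiv a_1^*),
\]
so it remains to evaluate this trace for $a_1^*(\xi)=c(\xi,\nu)$, which is linear in $\xi$. The vertical derivative simply strips $\xi$; for the horizontal one, Leibniz gives
\[
\nabla_{v_\alpha}(c(\eta^\alpha,\nu))=(\nabla_{v_\alpha}c)(\eta^\alpha,\nu)+c(\eta^\alpha,\nabla_{v_\alpha}\nu),
\]
and the Weingarten identity $\nabla_{v_\alpha}\nu=(Sv_\alpha)^\flat$, established in the paragraph preceding Lemma~\ref{lemma-covderiv-tilde}, turns the second summand (after summing on $\alpha$ and using $SN=0$) into $\pi^*\langle C,S\rangle$. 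The first summand is $(\divergence_X c)(\nu)$ directly from the definition.

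The delicate step is the horizontal-derivative calculation for $a_{1-}$: the extrinsic geometry of the boundary enters the subprincipal symbol only through the differentiation of $\nu$, and one must apply the covariant derivative to $\nu$ as a section of $T^*M$ rather than treating it as a fixed direction. Everything else is a routine application of \eqref{maingeosymb-compos}, \eqref{maingeosymb-adjoint}, and Lemma~\ref{symbol-of-nablaadj-C-nabla}.
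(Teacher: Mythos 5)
Your proposal is correct and follows essentially the same route as the paper: identify $A_1(0)$, $A_1^*(0)$, $A_2(0)$ via Lemma~\ref{lemma-covderiv-tilde} (using $J=1$ at $X$), read off $a_1$, $a_2$, $a_{2-}$ from \eqref{maingeosymb-compos} and Lemma~\ref{symbol-of-nablaadj-C-nabla}, and compute $a_{1-}=\trace\verderiv\horderiv a_1^*$ by Leibniz on $C\enu$ with $\nabla\nu$ given by the shape operator. The only (immaterial) deviation is that you kill the composition correction for $a_1$ by the vanishing of $\horderiv\sigma_\h(-i\h\nabla)$, whereas the paper uses the vanishing of $\verderiv\sigma_\h(\inu C)$; either factor suffices.
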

\begin{proof}
By Lemma~\ref{lemma-covderiv-tilde} we have
\[
\h A_1(0)= \inu C \circ(-i\h\nabla), \quad \h^2A_2(0)=(-i\h\nabla)^* \circ C \circ(-i\h\nabla),
\]
where $\nabla=\nabla^{TX}$ is the Levi-Civita connection of the boundary.
We compute the leading symbol of $\h A_1(0)$ using the composition formula \eqref{maingeosymb-compos}.
Recall \eqref{geosymb-of-connection}. 
The vertical derivative of the symbol of $\inu C$ vanishes,
Hence
\[
\sigma_\h(\h A_1)(0)(\xi)=\sigma_\h(\h A_1(0))(\xi)=c(\nu,\xi), \quad \xi\in T_X^* M.
\]
The formula for $\sigma_\h(\h^2A_2(0))$ follows from Lemma~\ref{symbol-of-nablaadj-C-nabla}.
In view of \eqref{maingeosymb-adjoint}, $a_{1-}=\trace \verderiv\horderiv a^*_1$.
Since $a^*_1(\xi)=c(\xi,\nu)=\langle\xi,\pi^*(C\enu)\rangle$ is linear in $\xi$,
its vertical derivative is immediate.
Hence
\[
\trace \verderiv\horderiv a^*_1 
 =\sum\nolimits_\alpha \langle\eta^\alpha,\pi^* \nabla_{v_\alpha} (C\enu)\rangle.
\]
Now, $\nabla_v (C\enu)$ equals $(\nabla_v C)\enu$ plus a contraction of $C$
with $\nabla_v\nu=Sv$, proving the formula for $a_{1-}$.
\end{proof}

If the elastic medium is isotropic, \eqref{C-iso-elast}, then a
straightforward computation shows that, at $r=0$,
\begin{align*}
(\divergence_X c)(\zeta) &= (\nabla\lambda\otimes\zeta)+ (\nabla\mu\otimes\zeta)^* + \langle\zeta,\nabla\mu\rangle\Id,\\
\langle C, S\rangle &= (\lambda+\mu) S + (\mu\trace S)\Id.
\end{align*}
Here $\nabla\lambda, \nabla\mu\in TX\subset T_XM$ are
the gradients of the Lam\'e parameters restricted to $X$.

\section{Microlocal Factorization}
\label{sect-factorization}

We factorize, microlocally in the elliptic region, the
$\h$-differential operator $\h^2L-\rho$ into a product
with right factor $\h D_r-Q$, where
$Q$ is a tangential $\h$-\psdiff\ operator such that the spectrum of its
principal symbol is contained in the lower halfplane, $\C_-$.

As in the previous section we identify $M$ with a boundary collar $I\times X$,
and sections of $\CT M\ueber M$ with $r$-dependent sections of $\CT_X M\ueber X$.
Operators are polynomials in $\hdr=\h D_r$ with tangential $\h$-(pseudo-)differential
operators as coefficients.
The latter are quantizations~\eqref{def-Oph-a},
$B_\h=\Op_\h(b_\h)\in\Pstkm{k}{m}$,
of tangential symbols,
\[
b_\h\in \Sytkm{k}{m}=\Cinfty(I,\Sykm{k}{m}(T^*X;\pi^*\End(\CT_X M))).
\]

By Proposition~\ref{prop-rep-elast-op}
the principal symbol $f(s,\xi)=c(\xi+s\nu)-\rho$ of $\h^2L-\rho$ at $\xi+s\nu$
is a second order polynomial in $s$.
View $s$ as the symbol of $\hdr$.
The coefficients are $\h$-independent tangential symbols.
By \eqref{C-is-elliptic}, there exists a constant $\delta>0$ such that
\begin{equation}
\label{f-est-from-below}
f(s,\xi)\geq \delta (1+|s|^2+|\xi|^2)\Id, \quad s\in\R,
\end{equation}
holds if $\xi$ is sufficiently large.
If $F\subset\EllRegion$ is closed and $R>0$, then $F\setminus\{|\xi|>R\}$ is compact.
Hence there exist $0<\varepsilon', \delta$ such that \eqref{f-est-from-below}
holds uniformly for $(r,\xi)\in [-\varepsilon',0]\times F$.
We say that a property holds at the elliptic region $\EllRegion$
if it is true in every open subset of $I\times\EllRegion$
where \eqref{f-est-from-below} holds uniformly.

Recall from section~\ref{section-impedance} that
we have a unique spectral factorization \eqref{factor-with-q} at $\EllRegion$.

\begin{lemma}
\label{factor-symbol}
Let $q=q(\xi)$, $\xi\in\EllRegion$, the unique solution of the spectral factorization
$f(s,\xi)=(s-q(\xi)^*)a(s-q(\xi))$, $\spec q(\xi)\subset\C_-$.
Then $q\in\Sytm{1}$ at $\EllRegion$.
\end{lemma}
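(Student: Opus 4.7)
The plan is to deduce both smoothness in $(r,x,\xi)$ and the order-one symbol estimates from the explicit integral representation~\eqref{factor-sapolyn-rep} of Lemma~\ref{factor-sapolyn}, which reads $q = a^{-1}(-\pi i\Id + f_1)f_0^{-1}$.

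First I would establish joint smoothness on any open subset of $I \times \EllRegion$ where~\eqref{f-est-from-below} holds uniformly. There, differentiating the identity $f \cdot f^{-1} = \Id$ yields uniform pointwise bounds on $f(s)^{-1}$ and its parameter derivatives, each dominated by an integrable function of $s$; dominated convergence then gives joint smoothness of $f_0$ and $f_1$ in $(r,x,\xi)$. Since $a = c(\nu)$ is smooth and invertible and $f_0 = \pi(\RE z)^{-1}$ is positive definite by \eqref{barnett-lothe-identity} and Proposition~\ref{prop-z}\eqref{z-re-posdef}, the displayed formula gives $q \in \Cinfty$.

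For the order-one symbol estimates, I would exploit the scaling identity
\[
f(ts, t\xi) = t^2 c(\xi + s\nu) - \rho, \quad t > 0,
\]
together with the uniqueness clause of the spectral factorization. Dividing by $t^2$ and using that $\spec(q(t\xi)/t) \subset \C_-$, uniqueness yields
\[
q(r, x, t\xi) = t\,\tilde q(r, x, \xi; t^{-2}),
\]
where $\tilde q(r, x, \xi; \varepsilon)$ denotes the spectral factor of the deformed polynomial $c(\xi + s\nu) - \varepsilon\rho\Id$. At $\varepsilon = 0$ this polynomial is positive definite for every $\xi \ne 0$ by~\eqref{C-is-elliptic}, so the previous smoothness argument, applied to the deformed polynomial, shows that $\tilde q$ is jointly smooth in $(r, x, \xi, \varepsilon)$ on a neighborhood of $\{\varepsilon = 0, \xi \ne 0\}$. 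Setting $t = |\xi|$ and $\hat\xi = \xi / |\xi|$ yields the representation $q(r, x, \xi) = |\xi|\, \tilde q(r, x, \hat\xi; |\xi|^{-2})$ for $|\xi|$ large. Routine chain-rule estimates using $|\partial_\xi^\beta \hat\xi| = \bigoh(|\xi|^{-|\beta|})$ and $|\partial_\xi^\beta |\xi|^{-2}| = \bigoh(|\xi|^{-2 - |\beta|})$ then give $|\partial_x^\alpha \partial_\xi^\beta q| = \bigoh((1 + |\xi|)^{1 - |\beta|})$; for bounded $|\xi|$ the same estimates follow from smoothness. Together this shows $q \in \Sytm{1}$ at $\EllRegion$.

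The principal obstacle is the scaling step: one cannot remove $\rho$ by direct homogeneity, so the auxiliary parameter $\varepsilon = t^{-2}$ must be introduced and the uniqueness of the spectral factor carefully invoked on both sides to obtain the identity relating $q(t\xi)$ and $\tilde q$. Once this is arranged, the remaining ingredients—uniformity of the dominated-convergence estimates and the smooth dependence of $\tilde q$ on $\varepsilon$ near zero—reduce to uniform positive-definiteness bounds on the reference polynomial $c(\xi + s\nu)$ and pose no genuine analytic difficulty.
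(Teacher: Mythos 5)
Your proposal is correct, but it reaches the symbol estimates by a genuinely different route than the paper. Both arguments start from the representation $a\,q f_0=-\pi i\Id+f_1$ of Lemma~\ref{factor-sapolyn}; the paper, however, stays with the integrals and estimates them directly: using \eqref{f-est-from-below} it shows $f_0\in\Sytm{-1}$ together with the two-sided bound $f_0\geq\delta\jap{\xi}^{-1}\Id$ (so $f_0$ is elliptic and $f_0^{-1}\in\Sytm{1}$), and splits $f_1=f_{10}+f_{11}$ to get $f_1\in\Sytm{0}$, after which $q=a^{-1}(-\pi i\Id+f_1)f_0^{-1}\in\Sytm{1}$ by the symbol calculus. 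You instead use the integral representation only for smoothness (and for the smooth parameter dependence of the deformed factor), and obtain the large-$|\xi|$ decay from the scaling identity $q(t\xi)=t\,\tilde q(\xi;t^{-2})$, where $\tilde q(\cdot;\eps)$ is the spectral factor of $c(\xi+s\nu)-\eps\rho$; positivity of the $\eps=0$ polynomial via \eqref{C-is-elliptic}, uniqueness of the factor with spectrum in $\C_-$, and compactness of the unit sphere bundle over the collar then give the estimates, with the bounded-$|\xi|$ range handled by smoothness on compact subsets (which is exactly what ``at $\EllRegion$'' requires in the paper's convention). Your deformation step is sound: for $\eps\leq 0$ and for small $\eps>0$ the polynomial $c(\xi+s\nu)-\eps\rho$ is uniformly positive definite on the real line near $|\xi|=1$, so $\tilde q$ is smooth in $(r,x,\xi,\eps)$ there, and the substitution $s\mapsto ts$ in the factorization of $f(\cdot,t\xi)$ identifies $q(t\xi)/t$ with $\tilde q(\xi;t^{-2})$. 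What the paper's route buys is quantitative control tied directly to \eqref{f-est-from-below}, uniformly on the sets where that bound holds, without any compactness or rescaling; what your route buys is more structure: it exhibits $q$ as $|\xi|$ times a smooth function of $(\hat\xi,|\xi|^{-2})$, hence shows at once that $q$ is a classical (indeed polyhomogeneous, in powers of $|\xi|^{-2}$) symbol of order one, a fact the paper only invokes later for the operator $Q$. Two small remarks: the positive definiteness of $f_0$ is already part of Lemma~\ref{factor-sapolyn}, so the detour through \eqref{barnett-lothe-identity} and Proposition~\ref{prop-z} is unnecessary; and since the paper's symbol seminorms are phrased with $\horderiv$ and $\verderiv$, you should note (one line) that your coordinate estimates of type $(1,0)$ are equivalent to the geometric ones, the horizontal derivative differing from a coordinate derivative by terms of the form $\Gamma\,\xi\,\partial_\xi$ which preserve the order.
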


\begin{proof}
By Lemma~\ref{factor-sapolyn} we have $a\,q = -\pi i f_0^{-1} + f_1 f_0^{-1}$
with integrals $f_j=f_j(\xi)$ defined there.
Using \eqref{f-est-from-below}, we can estimate $f_0(\xi)=\int f(s,\xi)^{-1}\intd s$ as follows:
\[
|f_0(\xi)|\leq\int_{-\infty}^\infty \delta^{-1} (1+|s|^2+|\xi|^2)^{-1}\intd s
      =\pi/\delta \jap{\xi},
\]
$\jap{\xi}=(1+|\xi|^2)^{1/2}$.
The integrand $f(s,\xi)^{-1}$ remains integrable after applying $\partial_r$,
$\horderiv$, and $\verderiv$ finitely many times.
Therefore these derivatives can be interchanged with the integral.
In view of the symbol properties of $f$, we deduce, using estimates as above,
$f_0\in\Sytm{-1}$ at $\EllRegion$.
Using an upper bound $f(s,\xi)\leq \delta^{-1} (|s|^2+\jap{\xi}^2)\Id$, we derive
$f_0(\xi)\geq \delta\jap{\xi}^{-1}\Id$, again in the sense of selfadjoint maps.
Therefore $f_0$ is an elliptic symbol, and $f_0^{-1}\in\Sytm{1}$ at $\EllRegion$.

Write $f_1=f_{10}+f_{11}$, where $f_{10}(\xi)= \int_{|s|\leq 1} s a f(s,\xi)^{-1}\intd s$,
\[
f_{11}(\xi) = \int_{|s|> 1} s^{-1} \big(s^2 a -f(s,\xi)\big) f(s,\xi)^{-1}\intd s.
\]
Recall $s^2 a -f(s)= -s(a_1+a_1^*)-(a_2-\rho)$.
Reasoning as in the proof of $f_0\in\Sytm{-1}$, we see that
the integrand of $f_{11}$ and its derivatives are integrable.
Moreover, we deduce $f_{11}\in\Sytm{0}$.
It is easy to see that $f_{10}\in\Sytm{-2}$.
Therefore, at $\EllRegion$, $f_1\in\Sytm{0}$.
The lemma follows.
\end{proof}

For a $\h$-tempered family $(u_\h)\in \h^{-\infty}\Dprime(X)$ the semiclassical wavefront set
$\WF_\h(u_\h)\subset T^*X\sqcup S^*X$ is defined, \cite{Gerard88asymptPoles},
\cite{SjoesZworski02Monodromy}.
Below we deal with operators associated to symbols which are not defined
on all of $T^*X$ but only at $\EllRegion$.
These operators are defined microlocally in $\EllRegion$ by letting them
operate on the subspace of distributions $(u_\h)$ which satisfy $\WF_\h(u_\h)\subset\EllRegion$,
modulo the space $\h^\infty\Cinfty$.

\begin{lemma}
\label{lemma-factorization}
Let $q$ be as in Lemma~\ref{factor-symbol}.
Microlocally at $\EllRegion$, 
\begin{equation}
\label{eq-factorization}
\h^2L-\rho = (\hdr - Q^\sharp)A_0(\hdr-Q),
\end{equation}
where $Q,Q^\sharp\in\Pstkm{0}{1}$, such that $Q-\Op_\h(q), Q^\sharp-\Op_\h(q^*)\in\Pstkm{-1}{0}$.
Here $A_0$ is as in Proposition~\ref{prop-rep-elast-op}.
\end{lemma}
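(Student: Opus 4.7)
The strategy is to construct $Q$ and $Q^\sharp$ iteratively in powers of $\h$ within the tangential geometric semiclassical calculus of Appendix~\ref{app-geopsdo-calc}, adapted to the elliptic region. I seek symbols $q_Q\sim q+\h q_{-1}+\h^2 q_{-2}+\cdots$ and $q_{Q^\sharp}\sim q^*+\h q_{-1}^\sharp+\cdots$, with $q\in\Sytm{1}$ given by Lemma~\ref{factor-symbol} and $q_{-j},q_{-j}^\sharp\in\Sytm{1-j}$ at $\EllRegion$, and expand the right-hand side of \eqref{eq-factorization} as a polynomial of degree $2$ in $\hdr$ with tangential operator coefficients. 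The coefficient of $\hdr^2$ is $A_0$ on both sides identically, so matching with the formula of Proposition~\ref{prop-rep-elast-op} reduces to adjusting the coefficients of $\hdr^1$ and $\hdr^0$ order by order in $\h$. At the principal level the matching is exactly the spectral factorization $f(s,\xi)=(s-q^*)a(s-q)$, which Lemma~\ref{factor-symbol} solves uniquely under the constraint $\spec q(\xi)\subset\C_-$.

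For the inductive step, assume $q_{-j}$ and $q_{-j}^\sharp$ are known for $0\leq j<N$, so that the symbol of the error in \eqref{eq-factorization} is of order $\h^N$ in the tangential calculus; its leading piece is a polynomial of degree at most one in $s$, say $r_{N,1}(\xi)s+r_{N,0}(\xi)$, obtained by expanding the geometric composition formula and collecting $\h^N$ terms. Incrementing with new corrections $q_{-N},q_{-N}^\sharp$ changes the symbol of $(\hdr-Q^\sharp)A_0(\hdr-Q)$ at this order by $-\h^N\bigl(q_{-N}^\sharp\,a\,(s-q)+(s-q^*)\,a\,q_{-N}\bigr)$. Matching the coefficients of $s^1$ and $s^0$ yields
\[
q_{-N}^\sharp\,a+a\,q_{-N}=r_{N,1},\qquad -q_{-N}^\sharp\,a\,q-q^*\,a\,q_{-N}=r_{N,0},
\]
and eliminating $q_{-N}^\sharp$ via the first relation produces the Sylvester equation
\[
q^*\,(a\,q_{-N})-(a\,q_{-N})\,q=r_{N,0}+r_{N,1}\,q
\]
for the unknown $a\,q_{-N}$.

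Since $\spec q(\xi)\subset\C_-$ and $\spec q^*(\xi)\subset\C_+$ are disjoint at every $\xi\in\EllRegion$, this Sylvester equation has the unique solution
\[
a\,q_{-N}(\xi)=\frac{1}{i}\int_0^\infty e^{itq^*(\xi)}\bigl(r_{N,0}(\xi)+r_{N,1}(\xi)q(\xi)\bigr)e^{-itq(\xi)}\intd t,
\]
with exponentially decaying integrand. Differentiation under the integral and estimates exploiting the spectral gap, in the style of Lemma~\ref{factor-symbol}, would give $q_{-N},q_{-N}^\sharp\in\Sytm{1-N}$ at $\EllRegion$. Borel summation of the resulting formal series then produces $Q,Q^\sharp\in\Pstkm{0}{1}$ with $Q-\Op_\h(q)$ and $Q^\sharp-\Op_\h(q^*)$ in $\Pstkm{-1}{0}$, and the identity \eqref{eq-factorization} holds microlocally at $\EllRegion$. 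The main technical difficulty I anticipate is propagating the tangential symbol estimates through the iteration uniformly as $|\xi|\to\infty$ within conic subsets of $\EllRegion$: the $r_{N,k}$ arise from repeated applications of the geometric composition formula and their control requires a quantitative, uniform form of the spectral gap between $\spec q$ and $\spec q^*$, in parallel with the integral argument that already provided the $\Sytm{1}$ estimate for $q$ itself.
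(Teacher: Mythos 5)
Your proposal is correct and is essentially the paper's argument: starting from $Q=\Op_\h(q)$, $Q^\sharp=\Op_\h(q^*)$, the degree-$\leq 1$ error in $\hdr$ is removed order by order via the uniquely solvable Sylvester equation furnished by the disjointness of $\spec q$ and $\spec q^*$ (the paper performs the correction at the operator level, choosing $S_0,S_1$ with $S_jQ-Q^\sharp S_j\equiv R_j$ and setting $Q_1=Q-A_0^{-1}(S_0Q+S_1)$, $Q_1^{\sharp}=Q^\sharp+(Q^\sharp S_0+S_1)A_0^{-1}$, whereas you eliminate $q_{-N}^\sharp$ at the symbol level), and both conclude by asymptotic summation. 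The only minor discrepancies are that the sign in your eliminated Sylvester equation depends on your unstated sign convention for the error, and that the paper tracks the simultaneous gain in $\h$-degree and symbol order ($R_j\in\Pstkm{-k}{j+1-k}$), which your claim $q_{-N}\in\Sytm{1-N}$ correctly mirrors.
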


\begin{proof}
Initially we set $Q=\Op_\h(q)$ and $Q^\sharp=\Op_\h(q^*)$.
At $\EllRegion$,
\begin{equation}
\label{factor-with-error}
\h^2L-\rho = (\hdr - Q^\sharp)A_0(\hdr-Q) + R_1 + R_0\,\hdr,
\end{equation}
where $R_j\in\Pstkm{-1}{j}$.
Here we used the formula for $L$ given in Proposition~\ref{prop-rep-elast-op}.
Observe that, if $A\in\Pstkm{k}{m}$, then the commutator $[\hdr,A]$
belongs to $\Pstkm{k-1}{m}$.
Aiming at an inductive construction, we assume that \eqref{factor-with-error} holds
for some positive integer $k$ with $R_j\in\Pstkm{-k}{j+1-k}$.
The spectra of $q$ and $q^*$ are disjoint.
It follows that the equation $s q- q^* s=r$ has, at $\EllRegion$, 
for every symbol $r\in \Sym{m}$ a unique solution $s\in \Sym{m-1}$.
Applying this construction to the principal symbols of the $R_j$'s,
we find operators $S_j\in\Pstkm{-k}{j-k}$ such that
$S_jQ -Q^\sharp S_j-R_j\in\Pstkm{-k-1}{j-k}$.
Set
\[
Q_1 =Q - A_0^{-1}(S_0Q+S_1),\quad
Q_1^{\sharp}=Q^\sharp + (Q^\sharp S_0+S_1) A_0^{-1}.
\]
Then
\begin{align*}
(\hdr - Q_1^{\sharp}) & A_0(\hdr-Q_1) \\
  &= (\hdr - Q^\sharp)A_0(\hdr-Q) \\
  &\phantom{=} + \big(S_0Q -Q^\sharp S_0\big)\hdr + \big(S_1Q -Q^\sharp S_1\big) \\
  &\phantom{=} + [\hdr, S_0 Q+S_1]  -(Q^\sharp S_0+ S_1) A_0^{-1} (S_0Q+S_1).
\end{align*}
Replace $Q$ and $Q^\sharp$ by $Q_1$ and $Q_1^{\sharp}$, respectively.
Then, by the symbol calculus, \eqref{factor-with-error} holds
with smaller errors, $R_j\in\Pstkm{-k-1}{j-k}$.
The proof is completed using asymptotic summation.
\end{proof}
It follows from the foregoing construction that the symbol of $Q$ is classical.

\section{A Dirichlet Parametrix}
\label{sect-parametrix}

Microlocally at $\EllRegion$, we solve, constructing a parametrix,
$B f=u$, the Dirichlet problem $\h^2Lu-\rho u=0$, $u\restrict{X}=f$.
We adapt the method of \cite[7.12]{taylor:96:pde2} to our setting.

Denote $\Sypm{m}\subset\Cinfty([-1,0],\Cinfty(T^* X;\pi^*\End(\CT_X M)))$
the space of symbols $b(s,\eta)$, $-1\leq s\leq 0$, $\eta\in T^*X$,
which satisfy the estimates
\[
 |\partial_s^\tau(\verderiv)^j(\horderiv)^\ell b(s,\eta)\big|
    \leq C_{\tau j\ell} \jap{\eta}^{m+\tau-j},
\]
for all nonnegative integers $\tau$, $j$, and $\ell$.
Let $\Sypkm{k}{m}$ denote the corresponding space of $\h$-dependent symbols $b_\h$.
Observe that $g(s\jap{\eta})\in\Sypm{0}$ if $g(t)=|t|^je^{\varepsilon t}$, $\varepsilon>0$,
$j$ a nonnegative integer.

We continue to work in a collar $I\times X\subset M$.
Choose a cutoff $\chi_0$ as in \eqref{def-Oph-a}.
Let $\delta>0$.
Given $b_\h\in\Sypkm{k}{m}$ introduce the operator
$B_{\h}=\Op_{\delta,\h}(b_\h(r/\h))$
as follows:
\begin{equation}
\label{Dirichlet-Param}
\begin{aligned}
B_{\h} f(r,y)=
   (2\pi\h)^{-n} & \int \limits _{T^*_y} \int \limits _{T_y}
      e^{-i\langle\eta,v\rangle /\h +\delta r\jap{\eta}/\h} \chi_0(y,v) \\
    & \cdot b_\h(r/\h,y,\eta) 
       \transport^{\CT_X M}_{\tofrom{y}{\exp_y v}} f(\exp_y v)\intd v \intd\eta,
\end{aligned}
\end{equation}
$r\in I$, $n=\dim X$.
We call $B_{\h}$ a Poisson operator with symbol $b_\h$ and (exponential) decay $\delta$.
The arguments in \cite[Ch.~7 Prop.~12.4]{taylor:96:pde2} apply to give
$B_\h:L^2(X)\to H^{-m+1/2}_\h(I\times X)$ with norm $\bigoh(\h^{-k+1/2})$.
(The Sobolev spaces $H^s_\h$ are defined using $\h D$ instead of $D$.)
If $0<\delta'<\delta$, $j\in\N$,
then $r^j B_\h\in\Op_{\delta',\h}\Sypkm{k-j}{m-j}$.
Moreover, $B_\h f\in\Cinfty$ in $r<0$,
and $B_\h f(r)$ decays together with its derivatives as $e^{\delta' r/\h}$,
uniformly if $f$ ranges in a bounded subset of $L^2(X)$.
We call $\h$-dependent operators negligible if they have Schwartz kernels
which are smooth and $\bigoh_{\Cinfty(M\times X)}(\h^\infty)$.
We write $A\equiv B$ iff $A-B$ is negligible.
Note that $B_\h$ in \eqref{Dirichlet-Param} is negligible if there exists
$\epsilon>0$ such that $b_\h(s,\eta)=0$ if $-\epsilon<s\leq 0$.

We need to handle the composition of a Poisson operator with a tangential operator.
The following lemma deals with this when the symbols are classical, i.e.,
they possess asymptotic expansions in powers of $\h$.

\begin{lemma}
\label{lemma-tang-compos-poisson}
Let $0<\delta'<\delta$.
Let $A=\Op_\h a(r)$ and $B=\Op_{\delta,\h}b(r/\h)$,
where $a=a(r,\eta)\in\Sytm{1}$ and $b(s,\eta)\in\Sypm{m}$
are $\h$-independent symbols.
Then $AB\equiv\Op_{\delta',\h}c(r/\h)$, where $c=c_\h\in\Sypkm{0}{m+1}$
has an asymptotic expansion $c\sim\sum_{j\geq 0} \h^j c_j$, $c_j\in\Sypm{m+1-j}$.
The principal term equals
\[ c_0(s,\eta)= a(0,\eta)b(s,\eta) e^{(\delta-\delta')s\jap{\eta}}. \]
\end{lemma}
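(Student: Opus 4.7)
The plan is to reduce the composition $AB$ to a standard tangential-Poisson composition by first absorbing the excess exponential decay into the symbol of $B$ and then Taylor-expanding $a(r,\eta)$ in $r$ about the boundary, exploiting that $B$ is effectively concentrated in a $\bigoh(\h/\jap{\eta})$-neighbourhood of $X$. First I would write
\[
e^{\delta r\jap{\eta}/\h} = e^{\delta' r\jap{\eta}/\h}\cdot e^{(\delta-\delta')s\jap{\eta}}, \qquad s = r/\h,
\]
realising $B$ as $\Op_{\delta',\h}(\tilde b)$ with $\tilde b(s,\eta) = b(s,\eta)\,e^{(\delta-\delta')s\jap{\eta}}$. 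A direct check of the defining estimates of $\Sypm{m}$ shows $\tilde b \in \Sypm{m}$: the extra factor is bounded by $1$ on $s\in[-1,0]$; each $\partial_s$ produces $(\delta-\delta')\jap{\eta}$, matching the gain of one order in $\eta$ that the class allows; and each $\verderiv$ on the exponential produces $(\delta-\delta')s\cdot(\verderiv\jap{\eta})$, which is of strictly negative order in $\eta$ and is controlled by $|s|\leq 1$.

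Next I would Taylor-expand
\[
a(r,\eta) = \sum_{j=0}^{N-1} \frac{r^j}{j!}\,a_j(\eta) + r^N R_N(r,\eta), \qquad a_j(\eta) = \partial_r^j a(0,\eta) \in \Sym{1},
\]
with $R_N$ smooth in $r$ and uniformly in $\Sytm{1}$. This decomposes
\[
AB = \sum_{j=0}^{N-1} \frac{1}{j!}\,\Op_\h(a_j)\,(r^j B) + \Op_\h(R_N)\,(r^N B).
\]
Using the property $r^N B \in \Op_{\delta',\h}\Sypkm{-N}{m-N}$ recalled just after \eqref{Dirichlet-Param}, and that $\Op_\h(R_N)$ is tangential of order $1$, the remainder lies in $\Op_{\delta',\h}\Sypkm{-N}{m+1-N}$; letting $N\to\infty$ it becomes negligible after asymptotic summation.

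For each fixed $j$ we have $r^j B = \h^j\,\Op_{\delta',\h}(s^j\tilde b)$, and the composition $\Op_\h(a_j)\cdot\Op_{\delta',\h}(s^j\tilde b)$ is a standard tangential-Poisson composition: applying the semiclassical stationary phase expansion in the tangential variables, treating $s\in[-1,0]$ as a parameter and the exponential factor as part of the amplitude, produces the expected asymptotic series whose $\alpha$-th term is $(\h^{|\alpha|}/\alpha!)\,\partial_\eta^\alpha a_j\cdot D_y^\alpha(s^j\tilde b)$, with each successive term dropping the $\eta$-order by one and carrying one extra power of $\h$. Assembling the contributions from all $j$ and $\alpha$ and summing asymptotically yields $c_\h\sim\sum_{\ell\geq 0}\h^\ell c_\ell$ with $c_\ell\in\Sypm{m+1-\ell}$. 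The principal term comes exclusively from $j=\alpha=0$ and equals $c_0(s,\eta)=a(0,\eta)\,\tilde b(s,\eta)=a(0,\eta)\,b(s,\eta)\,e^{(\delta-\delta')s\jap{\eta}}$, as claimed.

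The main technical obstacle is bookkeeping: one must verify that every composition and stationary-phase manipulation stays within the Poisson symbol class $\Sypm{\cdot}$ uniformly in $s\in[-1,0]$, and that the interplay between the vanishing $s^j$ near $s=0$ and the exponential decay $e^{(\delta-\delta')s\jap{\eta}}$ away from $s=0$ is precisely what produces the order drop $\Sypkm{-j}{m-j}$ for each Taylor term. This amounts to a careful verification against the defining estimates rather than a deep conceptual difficulty; the essential geometric input has already been packaged in the cited symbol property of $r^j B$.
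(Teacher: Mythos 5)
Your route is essentially the paper's: Taylor-expand $a$ in $r$, use $r^jB\in\Op_{\delta',\h}\Sypkm{-j}{m-j}$ to reduce to an $r$-independent tangential factor, treat the exponential weight as part of the amplitude, and read off the principal term; absorbing $e^{(\delta-\delta')s\jap{\eta}}$ into the symbol at the outset instead of at the end is only a cosmetic reshuffle (the paper multiplies the asymptotic sum by this factor, which lies in $\Sypm{0}$, in its final step). The genuine gap is the step you dismiss as a ``standard tangential--Poisson composition.'' That composition is precisely the content of the lemma, and it is not covered by the off-the-shelf semiclassical composition theorem: the quantization \eqref{Dirichlet-Param} carries the weight $e^{\delta' s\jap{\eta}}$ at the \emph{integration} frequency, while the result must be rewritten in the same quantization with the weight at the \emph{output} frequency; equivalently, one must show that the stationary-phase remainder --- which involves the weight at frequencies far from the critical point --- is negligible, or better, still carries an exponential factor $e^{\epsilon s\jap{\eta}}$ for some $\epsilon>0$. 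The paper's proof is organized around exactly this point: it writes the composition as an oscillatory integral as in Proposition~\ref{geocalc-compos}, performs a dyadic decomposition, uses non-stationary phase off common shells, and exploits that $\jap{\xi}/\jap{\eta}$ is bounded below when $\xi,\eta$ lie in the same dyadic shell, choosing $\epsilon$ less than $\delta$ times that bound; this is where the loss of decay reflected in $\delta'<\delta$ is actually earned. Your sketch never confronts the remainder: ``assembling the contributions and summing asymptotically'' presupposes that the error beyond every finite order is negligible, which is the one nontrivial assertion here. (Your claim that $\Op_\h(R_N)(r^NB)$ lies in $\Op_{\delta',\h}\Sypkm{-N}{m+1-N}$ likewise already uses a crude form of the composition being proved; the paper only uses the Taylor step to reduce to $r$-independent $a$.)

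A smaller inaccuracy: your verification that $\tilde b=b\,e^{(\delta-\delta')s\jap{\eta}}\in\Sypm{m}$ is not right as stated. A vertical derivative of the exponential produces $(\delta-\delta')s\,\verderiv\jap{\eta}$, and $\verderiv\jap{\eta}$ is of order zero, not strictly negative order; the required gain of $\jap{\eta}^{-1}$ is not obtained from $|s|\leq 1$ but from the bound $|s|\jap{\eta}\,e^{(\delta-\delta')s\jap{\eta}}=\bigoh(1)$, i.e.\ from the paper's observation that $g(s\jap{\eta})\in\Sypm{0}$ for $g(t)=|t|^je^{\varepsilon t}$. The conclusion is correct, but only through this mechanism. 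Similarly, your Euclidean expansion terms $\partial_\eta^\alpha a_j\,D_y^\alpha(\cdot)$ should be replaced by the vertical/horizontal derivatives of the geometric calculus, where one also notes that horizontal derivatives do not hit $e^{\delta' s\jap{\eta}}$ because the metric is parallel.
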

\begin{proof}
Using Taylor expansions,
$a(r,\eta)=\sum_{j<N} r^j a_j(\eta) + r^N a_N'(r,\eta)$,
and the properties of $r^j B$ noted above,
we may assume without loss of generality that $a$ does not depend on $r$.
Arguing as in the proof of Proposition~\ref{geocalc-compos} we
can write, at least formally, $AB=\Op_{0,\h}\tilde{c}(r/\h)$, where
\begin{align*}
\tilde{c}(s,x,\xi)
  = (2\pi\h)^{-2n} \int \limits _{T_x\times T^*_x\times T_x\times T^*_x}
    & e^{i\varphi/\h} a(x,\eta) \transport^{\pi^*\End(\CT_X M)}_{\tofrom{x}{y}} b(s,y,\zeta)
       e^{\delta s\jap{\zeta}} \\
    & \cdot  M(x,w+v,v) \intd(v,\eta,w,\vartheta),
\end{align*}
$\varphi$ as in \eqref{c-statphase}.
We use the standard arguments in handling compositions of symbols:
dyadic decompositions and the method of (non-)stationary phase.
We infer that there exist $\epsilon>0$
and $d_j\in\Sypm{m+1-j}$, $d_0(s,\eta)=a(\eta)b(s,\eta)$,
such that for every $N$,
\[
\tilde{c}(s,\eta) =\big(\sum_{j<3N} \h^j d_j(s,\eta)\big)e^{\delta s\jap{\eta}}
    + \tilde{d}_{N\h}(s,\eta)e^{\epsilon s\jap{\eta}},
\]
where $\tilde{d}_{N\h}\in \Sypkm{-N}{m+1-N}$.
Observe that $\jap{\xi}/\jap{\eta}$ is uniformly bounded from below
if $\xi$ and $\eta$ range in the same dyadic shell.
Above we have chosen $\epsilon$ less than $\delta$ times this bound.
Define $c_\h(s,\eta)$ as the product of an asymptotic sum $\sum_{j\geq 0} \h^j d_j(s,\eta)$
with the symbol $e^{(\delta-\delta')s\jap{\eta}}\in\Sypm{0}$.
It follows that $AB-\Op_{\delta',\h}c(r/\h)$ belongs to
$\Op_{\epsilon,\h}\Sypkm{-N}{m+1-N}$ for every $N$.
Thus $AB\equiv\Op_{\delta',\h}c(r/\h)$.
\end{proof}

Let $q$ and $Q$ as in Lemma~\ref{lemma-factorization}.
If $\eta\in\EllRegion$ ranges in a set having a positive distance to the
complement of the elliptic region, then there exist
positive constants $\delta_0$ and $M$ such that
\begin{equation}
\label{bound-exp-q}
|e^{si q(0,\eta)}|\leq M e^{s\delta_0\jap{\eta}}, \quad s\leq 0.
\end{equation}
This follows from the fact that the spectrum of $q(0,\eta)/\jap{\eta}$
is contained in a compact subset of the lower halfplane then.
We shall solve $(\hdr-Q)B\equiv 0$, $B\restrict{r=0}=\Id$,
microlocally at $\EllRegion$.
On the symbol level we have to solve linear ordinary differential equations
with constant coefficient matrices.
The following assertions are true microlocally in $\EllRegion$
where \eqref{bound-exp-q} holds.

\begin{lemma}
\label{lemma-ode-for-symbols}
Let $0<\delta<\delta_0$.
Let $r\in \Sypm{1+m}$ and $v\in\Sym{m}$.
Let $b(s,\eta)$ be the solution of the initial value problem
\begin{equation}
\label{ode-for-ph}
\partial_s b(s,\eta)=\big(i q(0,\eta) -\delta\jap{\eta}\big) b(s,\eta)+r(s,\eta), \quad -1<s\leq 0,
\end{equation}
and $b(0,\eta)=v(\eta)$.
Then $b(s,\eta)\in\Sypm{m}$.
\end{lemma}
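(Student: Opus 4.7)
The plan is to solve the ODE explicitly by variation of constants and then obtain the symbol estimates by induction on the order of differentiation. Setting $A(\eta)=iq(0,\eta)-\delta\jap{\eta}$, the unique solution is
\[
b(s,\eta) = e^{sA(\eta)}v(\eta) - \int_{s}^{0} e^{(s-t)A(\eta)} r(t,\eta)\,\intd t,\quad -1\leq s\leq 0.
\]
The input from the previous material is the propagator estimate \eqref{bound-exp-q}, which combined with $\delta<\delta_0$ yields $|e^{sA(\eta)}|\leq M e^{s(\delta_0-\delta)\jap{\eta}}$ for $s\leq 0$, uniformly microlocally in $\EllRegion$. In particular $|e^{sA(\eta)}|\leq M$, and the kernel satisfies $\int_{s}^{0} e^{(s-t)(\delta_0-\delta)\jap{\eta}}\intd t\leq C\jap{\eta}^{-1}$. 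This is the key mechanism that absorbs the one-order gap between the forcing $r\in\Sypm{m+1}$ and the target class $\Sypm{m}$.

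Step one is the pointwise bound: the two terms give $|b(s,\eta)|\leq M|v(\eta)| + C\jap{\eta}^{-1}\sup_t|r(t,\eta)|$, both of which are $\bigoh(\jap{\eta}^m)$. Step two handles $s$-derivatives directly from the equation: $\partial_s b=A b+r$, and inductively $\partial_s^\tau b$ is a linear combination of $A^k b$ with $k\leq\tau$ and of $\partial_s^{\tau'}r$ with $\tau'<\tau$; since $A\in\Sytm{1}$, this yields the required bound of order $m+\tau$. Step three treats vertical derivatives by differentiating the ODE: $(\verderiv)^j b$ solves the same linear equation with forcing
\[
\sum_{k<j}\binom{j}{k}(\verderiv)^{j-k}A\cdot(\verderiv)^{k}b + (\verderiv)^{j}r,
\]
and initial condition $(\verderiv)^{j}v$. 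Since $(\verderiv)^{j-k}A\in\Sytm{1-(j-k)}$ and, by the induction hypothesis, $(\verderiv)^{k}b\in\Sypm{m-k}$ for $k<j$, the forcing lies in $\Sypm{m-j+1}$ while the initial datum lies in $\Sym{m-j}$; reapplying the $L^\infty$ bound of step one to this derived system gives $(\verderiv)^{j}b\in\Sypm{m-j}$. Horizontal derivatives $\horderiv$ (which do not alter orders) are handled identically, noting that $\horderiv A\in\Sytm{1}$ and $\horderiv r\in\Sypm{m+1}$, so $\horderiv b$ satisfies an ODE of exactly the same type as $b$.

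The combined estimates on $\partial_s^\tau(\verderiv)^j(\horderiv)^\ell b$ are obtained by iterating this scheme, the order count $m+\tau-j$ being preserved at every step. The only delicate point, and the one that makes the argument work, is the strict inequality $\delta<\delta_0$: it guarantees that the propagator decays in $|s|\jap{\eta}$, so the integral in the Duhamel term produces the crucial gain $\jap{\eta}^{-1}$. Everything else is routine symbol bookkeeping, and the asserted conclusion $b\in\Sypm{m}$ follows.
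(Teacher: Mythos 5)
Your proof is correct and follows essentially the same route as the paper: Duhamel's formula combined with the propagator bound \eqref{bound-exp-q} and the strict inequality $\delta<\delta_0$ gives the pointwise estimate with the crucial $\jap{\eta}^{-1}$ gain, and the derivative estimates are obtained recursively by differentiating the ODE, which yields equations of the same structure with the same coefficient matrix. The only quibbles are notational (e.g.\ $A$ and its vertical derivatives belong to $\Sym{1}$, $\Sym{1-(j-k)}$ rather than the tangential classes $\Sytm{\cdot}$), not mathematical.
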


\begin{proof}
Note that the coefficient matrix of \eqref{ode-for-ph} does not depend on $s$.
Representing $b$ by Duhamel's formula and using \eqref{bound-exp-q} 
we derive the estimate
\begin{align*}
|b(s,\eta)| &\leq M|v(\eta)|+M\int_s^0 e^{(\delta_0-\delta) s\jap{\eta}} |r(s,\eta)|\intd s \\
            &\leq M|v(\eta)|+(M/(\delta_0-\delta)) \sup_{s\leq 0} |r(s,\eta)|/\jap{\eta}.
\end{align*}
Moreover, we can estimate $\partial_s b(s,\eta)$ by estimating the right-hand side of \eqref{ode-for-ph}.
Differentiating \eqref{ode-for-ph} we derive linear ordinary differential equations
for $\partial_s^\tau(\verderiv)^j(\horderiv)^\ell b(s,\eta)$.
These equations are of the same structure as \eqref{ode-for-ph} with
the same coefficient matrix.
The asserted symbol estimates are obtained recursively.
\end{proof}

\begin{prop}
\label{exist-dirichlet-param}
Let $0<\delta<\delta_0$, and $\epsilon>0$.
There exists $B\in\Op_{\delta,\h}\Sypkm{0}{0}$ with Schwartz kernel
supported in $-\epsilon<s\leq 0$, such that, microlocally at $\EllRegion$,
$(\hdr -Q)B\equiv 0$ and $B\restrict{r=0}=\Id$.
Moreover, $(\h^2 L-\rho)B\equiv 0$.
\end{prop}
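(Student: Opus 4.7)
The plan is to seek $B$ as a Poisson operator $B = \Op_{\delta,\h}(b_\h(r/\h))$ whose symbol admits a classical expansion $b_\h \sim \sum_{j \geq 0} \h^j b_j(s,\eta)$ with $b_j \in \Sypm{-j}$, imposing the initial conditions $b_0(0,\eta) = \Id$ and $b_j(0,\eta) = 0$ for $j \geq 1$ so that $B\restrict{r=0} = \Id$. To enforce support in $-\epsilon < s \leq 0$ I would multiply $b_\h$ by a cutoff in $r$ supported in that interval and equal to one near $r=0$; because $b_\h$ will decay exponentially in $|s|\jap{\eta}$ by construction, the truncation contributes only a negligible operator.

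To set up the transport equations, note that $\hdr B = \Op_{\delta,\h}\bigl((-i\partial_s - i\delta\jap{\eta}) b_\h\bigr)$ by direct differentiation. Applying Lemma~\ref{lemma-tang-compos-poisson} to the composition $QB$ (first shrinking $\delta$ by an arbitrarily small amount and re-absorbing the resulting exponential factor into the symbol) yields a Poisson representation whose full symbol expands in powers of $\h$ with leading term $q(0,\eta)b_\h$ and further contributions coming from the $r$-Taylor expansion of $q(r,\eta)$ at $r=0$ and from the stationary-phase corrections involving $\verderiv$ and $\horderiv$. Matching coefficients of $\h^j$ in the equation $(\hdr - Q)B \equiv 0$ then delivers, for each $j$, an ordinary differential equation of the form
\begin{equation*}
\partial_s b_j = \bigl(iq(0,\eta) - \delta\jap{\eta}\bigr) b_j + r_j, \qquad b_j(0,\eta) = \delta_{j0}\Id,
\end{equation*}
where $r_j \in \Sypm{1-j}$ is a universal expression in $b_0,\ldots,b_{j-1}$ and in the symbols of $Q$. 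Lemma~\ref{lemma-ode-for-symbols} then furnishes a solution $b_j \in \Sypm{-j}$.

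Borel-summing the $b_j$'s to obtain $b_\h \in \Sypkm{0}{0}$ and applying the cutoff yields $B \in \Op_{\delta,\h}\Sypkm{0}{0}$ with $(\hdr - Q)B \equiv 0$ and $B\restrict{r=0} = \Id$ microlocally at $\EllRegion$. The final assertion then follows at once from the microlocal factorization of Lemma~\ref{lemma-factorization}:
\begin{equation*}
(\h^2 L - \rho)B = (\hdr - Q^\sharp)A_0(\hdr - Q)B \equiv 0,
\end{equation*}
since the tangential operator $(\hdr - Q^\sharp)A_0$ preserves negligibility. The main obstacle is the bookkeeping in the second step: carefully combining Lemma~\ref{lemma-tang-compos-poisson} with the $r$-Taylor expansion of the symbol of $Q$ to verify that the forcing terms $r_j$ lie in $\Sypm{1-j}$, and keeping the decay parameter consistent between $\hdr B$ and $QB$ at each stage.
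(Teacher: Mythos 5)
Your proposal is correct and follows essentially the same route as the paper: reduce $(\hdr-Q)B\equiv 0$ via Lemma~\ref{lemma-tang-compos-poisson} to a hierarchy of transport ODEs with coefficient $iq(0,\eta)-\delta\jap{\eta}$, solve them with Lemma~\ref{lemma-ode-for-symbols}, sum asymptotically, and get $(\h^2L-\rho)B\equiv 0$ from the factorization of Lemma~\ref{lemma-factorization}. The only cosmetic difference is the decay-parameter bookkeeping, which the paper handles by fixing a decreasing sequence $\delta_j>\delta$ of decay rates while you re-absorb the exponential factors $e^{(\delta-\delta')s\jap{\eta}}\in\Sypm{0}$ into the symbols; both are fine.
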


\begin{proof}
It follows from Lemma~\ref{lemma-tang-compos-poisson} that,
for a classical symbol $b\in\Sypkm{k}{m}$, $0<\delta'<\delta$,
modulo negligible operators,
the composition $(\hdr-Q)\Op_{\delta,\h}b(r/\h)$ equals
$\Op_{\delta',\h}c(r/\h)$, $c\in\Sypkm{k}{m+1}$.
Moreover, $c$ is classical, and, modulo $\Sypkm{k-1}{m}$,
\[
c(s,\eta)\equiv
\big(-i\partial_s b(s,\eta)-i\delta\jap{\eta} b(s,\eta) -q(0,\eta) b(s,\eta)\big)e^{(\delta-\delta')s\jap{\eta}}.
\]
Fix a sequence $(\delta_j)$, $\delta<\delta_{j+1}<\delta_j$.
Using Lemmas~\ref{lemma-tang-compos-poisson} and \ref{lemma-ode-for-symbols}
we recursively find $\h$-independent symbols $b_j\in\Sypm{1-j}$, $b_1\restrict{r=0}=\Id$,
$b_j\restrict{r=0}=0$ if $j>1$, such that $B_j=\h^{j-1}\Op_{\delta_j,\h}b_j(r/\h)$ satisfy
\[
(\hdr-Q)(B_1+\dots+B_j)\in\Op_{\delta',\h}\Sypkm{-j}{1-j},\quad \delta_{j+1}<\delta'<\delta_j.
\]
Now $B$ is contructed using asymptotic summation.
The last assertion follows from the factorization \eqref{eq-factorization}.
\end{proof}

\section{The Displacement-to-Traction Operator}
\label{sect-dt-op}

In this section we deal with operators on the boundary $X$.
Therefore, in the following, operators and symbols are, as a rule, evaluated at $r=0$.

Let $B$ denote the Dirichlet parametrix given in Proposition~\ref{exist-dirichlet-param}
and $T$ the traction defined in \eqref{def-elast-traction}.
The operator $Z=\h TB$ is called the semiclassical displacement-to-traction operator,
or Neumann operator, at $\EllRegion$.
By Propositions~\ref{prop-rep-elast-op} and \ref{exist-dirichlet-param} we have,
if $\WF_\h(f)\subset\EllRegion$,
\[
Zf=(iA_0\hdr B f + i\h A_1 B f)\restrict{X} = iA_0(0) Q(0)f +i\h A_1(0)f.
\]
Therefore, $Z=iA_0Q+i\h A_1$, and $Z$ is, microlocally in $\EllRegion$,
a \psdiff\ operator of class $\Pskm{0}{1}$.
The symbol of $Z$ is classical since the symbols of $A_j$ and $Q$ are.

\begin{lemma}
\label{Z-op-mainsymb}
The displacement-to-traction operator $Z$ is, in $\EllRegion$,
up to a negligible operator, formally selfadjoint.
The principal symbol of $Z$ equals the surface impedance tensor
\begin{equation}
\label{z-of-Z}
z=i(aq+a_1)\in\Sym{1}(\EllRegion;\pi^*\End(\CT_X M)).
\end{equation}
The leading symbol of $Z$ is $z+\h z_-$, where $z_-\in\Sym{0}$,
\begin{equation}
\label{sub-z-of-Z}
z_- q -q^*z_-  = i\trace(S) z +i\partial_r z -a_{2-} -a_{1-}q 
  +\trace\big(\verderiv q^*. a\horderiv q\big).
\end{equation}
\end{lemma}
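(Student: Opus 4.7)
First, I would identify $Z$. By Proposition~\ref{prop-rep-elast-op}, $-i\h T = A_0(0)\hdr + \h A_1(0)$. The Dirichlet parametrix $B$ of Proposition~\ref{exist-dirichlet-param} satisfies $(\hdr-Q)B \equiv 0$ on $\EllRegion$ and $B|_{r=0}=\Id$, so $\hdr(Bf)|_{r=0} \equiv Qf$. Hence
\[
Z = \h TB \equiv iA_0 Q + i\h A_1 \in \Pstkm{0}{1}.
\]
Writing $\sigma_\h(Q) = q + \h q_-$ (possible since $Q-\Op_\h q \in \Pstkm{-1}{0}$ by Lemma~\ref{lemma-factorization}), the composition formula~\eqref{maingeosymb-compos} gives $\sigma_\h(iA_0 Q) \equiv ia(q+\h q_-)$ because $\verderiv a = 0$; combined with $\sigma_\h(i\h A_1) \equiv ia_1$ from Lemma~\ref{lemma-a12-minus}, this yields $\sigma_\h(Z) \equiv z + \h z_-$ with $z = i(aq+a_1)$ and $z_- = iaq_-$. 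For formal selfadjointness, I would apply Green's identity to $Bf$ and $Bg$ (both supported in the collar by Proposition~\ref{exist-dirichlet-param}): since $L$ is formally selfadjoint and $(\h^2L-\rho)B \equiv 0$ with $\rho$ real, $\int_M (LBf|Bg) \equiv \int_M (Bf|LBg)$; using $B|_X = \Id$ and $\h TB = Z$, the boundary terms reduce to $(f|Zg)_X \equiv (Zf|g)_X$, i.e.\ $Z \equiv Z^*$ microlocally at $\EllRegion$.

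Next, I would identify $q_-$ from the factorization~\eqref{eq-factorization}. Expanding using $[\hdr,A] = -i\h\partial_r A$ and matching the $\hdr^1$- and $\hdr^0$-coefficients yields, modulo negligible operators,
\begin{align*}
A_0 Q + Q^\sharp A_0 &\equiv -\h A_1 - \h A_1^* + i\h\trace(S) A_0, \\
Q^\sharp A_0 Q &\equiv \h^2 A_2 - \rho - i\h(\partial_r A_0)Q - i\h A_0 \partial_r Q - i\h^2\partial_r A_1 - i\h^2\trace(S) A_1.
\end{align*}
The crucial subtlety is the $\h$-order of $-i\h^2\partial_r A_1$ and $-i\h^2\trace(S) A_1$: because the leading $\h$-symbol $a_1$ of $\h A_1$ carries no explicit $\h$, these operators contribute $-i\partial_r a_1$ and $-i\trace(S)a_1$ at the \emph{subprincipal} ($\h^1$) level. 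Taking leading symbols via~\eqref{maingeosymb-compos}, \eqref{maingeosymb-adjoint}, and Lemma~\ref{lemma-a12-minus}, matching at $\h^0$ reproduces the algebraic identities $aq+q^*a = -(a_1+a_1^*)$ and $q^*aq = a_2-\rho$; matching at $\h^1$ gives two coupled linear equations for $q_-$ and $q_-^\sharp$:
\begin{gather*}
aq_- + q_-^\sharp a = ia_{1-} + i\trace(S)a + i\trace(\verderiv q^* . \horderiv a), \\
q_-^\sharp aq + q^*aq_- = -ia_{2-} - i\partial_r(aq+a_1) - i\trace(S)a_1 + i\trace(\verderiv q^* . \horderiv(aq)).
\end{gather*}

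Finally, multiplying the first subprincipal equation on the right by $q$ and subtracting from the second eliminates $q_-^\sharp$; expanding $\horderiv(aq) = (\horderiv a)q + a\horderiv q$ causes the $\trace(\verderiv q^* . \horderiv a)q$-terms to cancel, and after multiplying through by $i$ and inserting $z_- = iaq_-$ I obtain
\[
z_- q - q^* z_- = -a_{2-} - a_{1-}q - \partial_r(aq+a_1) - \trace(S)(aq+a_1) + \trace(\verderiv q^* . a\horderiv q).
\]
Since $aq + a_1 = -iz$, one has $-\partial_r(aq+a_1) = i\partial_r z$ and $-\trace(S)(aq+a_1) = i\trace(S)z$, yielding~\eqref{sub-z-of-Z}. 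The main obstacle is exactly the $\h$-order accounting: recognizing that apparently-$\h^2$ terms in the factorization contribute at the $\h^1$ level (because the classical operator $A_1$ has leading $\h$-symbol $a_1$ free of explicit $\h$), and tracking the composition correction $-i\h\trace(\verderiv q^* . a\horderiv q)$ from $\sigma_\h(Q^\sharp A_0 Q)$, which is exactly the piece that survives the $q_-^\sharp$-elimination to appear on the right of the final Sylvester equation.
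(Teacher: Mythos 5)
Your proposal is correct and follows essentially the same route as the paper: identifying $Z\equiv iA_0Q+i\h A_1$, proving selfadjointness via the Green-type identity from \eqref{def-elast-traction} together with $(\h^2L-\rho)B\equiv 0$, splitting the factorization \eqref{eq-factorization} into two tangential operator equations, and eliminating $q_-^\sharp$ at the leading-symbol level to get \eqref{sub-z-of-Z}. Your sign bookkeeping, the cancellation of the $\trace(\verderiv q^*.\horderiv a)q$ terms, and the observation that the apparently $\h^2$ terms involving $A_1$ contribute at the subprincipal level all match the paper's computation.
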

\begin{proof}
Let $f_1, f_2\in L^2(X;\CT_X M)$, $\WF_\h(f_j)\subset\EllRegion$,
and set $u_j =B f_j$.
By \eqref{def-elast-traction},
\begin{align*}
\int_X & (Zf_1|f_2)\dvol_X - \int_X (f_1|Zf_2)\dvol_X  \\
   &= \h^{-1} \int_M (u_1|\h^2 Lu_2-\rho u_2) -(\h^2 Lu_1-\rho u_1|u_2) \dvol_M.
\end{align*}
It follows from Proposition~\ref{exist-dirichlet-param} that the right-hand side
is $\bigoh(\h^\infty)$, uniformly if the $f_j$'s range in a bounded set
and have $\h$-wavefronts contained in a common closed subset of $\EllRegion$.
Thus $Z^*=Z$ in $\EllRegion$.
Recalling $Z=iA_0Q+i\h A_1$, we infer from the symbol calculus
that $z=i(aq+a_1)$ is the principal symbol.

It remains to prove the formula for $z_-$.
Write the leading symbols of $Q$ and $Q^\sharp$ as $q+\h q_-$ and $q^*+\h q_-^\sharp$, respectively.
It is easy to see that $z_-=iaq_-$.
Recall the formula for $L$ in Proposition~\ref{prop-rep-elast-op}.
The factorization~\eqref{eq-factorization} is equivalent to
\begin{align*}
(\hdr & -i\h\trace(S)) \h A_1 +(\h A_1^* -i\h\trace(S) A_0)\hdr +\h^2A_2 -\rho \\
   &= -\hdr A_0 Q -Q^\sharp A_0\hdr + Q^\sharp A_0 Q.
\end{align*}
This in turn is equivalent to the following two equations of
tangential operators:
\begin{gather*}
\h A_1 +\h A_1^* -i\h\trace(S) A_0 + A_0 Q + Q^\sharp A_0 = 0, \\
[\hdr, A_0Q+ \h A_1] -i\h\trace(S) \h A_1 +\h^2A_2 -\rho - Q^\sharp A_0 Q =0.
\end{gather*}
On the principal symbol level these equations become
$a_1+a_1^* +aq+q^*a=0$ and $a_2-\rho-q^* a q=0$.
These equations agree with \eqref{factor-with-q}.
On the leading symbol level the equations become,
after division by $\h$,
\begin{gather*}
-ia_{1-} -i\trace(S) a + aq_- + q_-^\sharp a -i\trace\big(\verderiv q^*. \horderiv a\big) =0, \\
-\partial_r z -i\trace(S) a_1 -ia_{2-} 
   - q^* a q_- -q_-^\sharp aq +i\trace\big(\verderiv q^*. \horderiv aq\big) =0.
\end{gather*}
Elimination of $q_-^\sharp$ from these equations gives
\[
(aq_-)q -q^*(aq_-) = ia_{1-}q +\trace(S) z +\partial_r z +ia_{2-}
   -i\trace\big(\verderiv q^*. a\horderiv q\big).
\]
Formula~\eqref{sub-z-of-Z} for $z_-=iaq_-$ follows.
\end{proof}

In principle $z_-$ is found as the unique solution of the linear equation \eqref{sub-z-of-Z}.
The right-hand side of the equation consists of known quantities
and their first order derivatives.
Refer to section~\ref{sect-isosubpr} for an algorithm computing $z_-$
if the elastic medium is isotropic.

\section{Diagonalization of $Z$}
\label{sect-diag}

Assume \eqref{hypo-atmost} and \eqref{hypo-intersect}.
By Proposition~\ref{prop-radial-line-and-charvar} the kernel $\Kernel z$ defines
a line bundle over the characteristic variety $\Sigma=p^{-1}(1)$ of the
surface impedance tensor $z$.
Since zero is a simple eigenvalue of $z$ at $\Sigma$,
there exist $\epsilon>0$ and an open neighbourhood $K\subset\EllRegion$ of $\Sigma$ such that
$z(\xi)$, $\xi\in K$, has exactly one eigenvalue $\lambda_0(\xi)$ of modulus $<\epsilon$.
(In the following, $K$ is to be replaced by a smaller neighbourhood when necessary.)
The line bundle $E_0=\Kernel(z-\lambda_0)\ueber K$ is a subbundle
of $\pi^*\CT_X M=\Hom(\C,\pi^*\CT_X M)$.
The orthoprojector onto this bundle is given by a contour integral,
$u_0=(2\pi i)^{-1}\oint_{|\lambda|=\epsilon} (\lambda-z)^{-1}\intd\lambda$.
Denote $u_1=\Id-u_0$ the orthoprojector onto the orthogonal bundle, $E_1$.

Assume also \eqref{hypo-trivial}.
Choose a unit section $v$ of $\Kernel z\ueber \Sigma$, $|v|=1$.
Using $u_0$, extend $v$ to a unit section of $E_0\ueber K$.
Call this section also $v$.
Clearly, $u_0=v\otimes v^*$.
If $R\in\Pskm{0}{0}$ denotes the inverse
of a square root of the scalar operator ${\Op_\h(v)}^* \Op_\h(v)$,
then $V=\Op_\h(v)R$ satisfies $V^*V=\Id$, i.e., $V$ is an isometry.

\begin{lemma}
\label{lemma-diagonalize}
Choose $V\in\Pskm{0}{0}(K;\C,\CT_X M)$,
with principal symbol $v$, such that $V^*V=\Id$.
Set $U_0=VV^*$, $U_1=\Id-U_0$.
There exist
$B\in\Pskm{-2}{-1}(X;\CT_X M)$, $B^*=B$, and
$R\in\Pskm{-1}{-1}(X;\CT_X M)$, $R^*+R=0$,
such that, microlocally in $K$,
\begin{equation}
\label{Z-decomp}
(\Id-R^*)Z(\Id-R)= U_0(Z+B)U_0 + U_1(Z+B)U_1.
\end{equation}
In particular,
\begin{equation}
\label{Z-itwine}
(\Id-R^*)Z(\Id-R)V= VV^*(Z+B)V.
\end{equation}
The leading symbol of the scalar operator $V^*(Z+B)V\in\Pskm{0}{1}$ equals
\begin{equation}
\label{Z-itwine-mainsymb}
\lambda_0 +\h(z_- v|v) 
      -i\h\trace\big(v^*\verderiv z. \horderiv v +\verderiv v^*.\horderiv \lambda_0.v\big).
\end{equation}
Here, as in Lemma~\ref{Z-op-mainsymb}, $z+\h z_-$ denotes the leading symbol of $Z$.
\end{lemma}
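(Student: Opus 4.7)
The plan is to construct $R$ and $B$ by induction on the semiclassical order, then to compute $\sigma_\h(V^*(Z+B)V)$ directly from the geometric composition formula~\eqref{maingeosymb-compos}.

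For the block-diagonalization, the key observation is that $v$ is an eigenvector of the self-adjoint principal symbol $z$, so $z$ commutes with $u_0=vv^*$ and $u_1=\Id-u_0$, and in particular $u_0zu_1=0$. Hence the off-diagonal part $U_0ZU_1$ already lies one semiclassical order down, in $\Pskm{-1}{0}$. Expanding $(\Id-R^*)Z(\Id-R)=Z-R^*Z-ZR+R^*ZR$, I would seek $R\sim\sum_{j\geq 1}\h^jR_j$ with each $R_j$ block off-diagonal for $(u_0,u_1)$. At step $k$ the off-diagonal residue lies in $\Pskm{-k-1}{-k}$; to eliminate it at that order one must solve on the symbol level a Sylvester equation of the form $\lambda_0 r_{01}-r_{01}z_1=\text{(given block)}$, where $z_1=u_1zu_1$, together with its adjoint counterpart. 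This is uniquely solvable because the spectrum of the restriction of $z(\xi)$ to the range of $u_1(\xi)$ stays away from $\lambda_0(\xi)$ on $K$. The diagonal residue at the same step is absorbed into the next term of $B$, with self-adjointness preserved by taking the symmetric part (the anti-symmetric part vanishes to the required order because $Z$ is self-adjoint). Skew-adjointness of $R$ is built in by giving opposite signs to the $U_0\cdot U_1$ and $U_1\cdot U_0$ blocks. Asymptotic summation then produces $R$ and $B$ of the stated orders. Equation~\eqref{Z-itwine} then follows by right multiplication of~\eqref{Z-decomp} by $V$, using $V^*V=\Id$, which gives $U_0V=V$ and $U_1V=0$.

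For the leading symbol~\eqref{Z-itwine-mainsymb}, $B\in\Pskm{-2}{-1}$ only contributes from order $\h^2$ and drops out. Writing $\sigma_\h(V)\equiv v+\h v_1$ and $\sigma_\h(Z)\equiv z+\h z_-$, I would apply~\eqref{maingeosymb-compos} twice to the triple product. The principal part is $v^*zv=\lambda_0$. At order $\h$ I collect the subprincipal piece $v^*z_-v=(z_-v|v)$, the $v_1$-contributions $v_1^*zv+v^*zv_1=\lambda_0(v_1^*v+v^*v_1)$, and three composition-correction traces. Using the eigenvalue identities $zv=\lambda_0v$ and $v^*z=\lambda_0v^*$ together with the product rule $\horderiv(zv)=(\horderiv\lambda_0)v+\lambda_0\horderiv v$, these three traces combine to
\[
-i\h\,\trace\bigl(v^*\verderiv z.\horderiv v+\verderiv v^*.\horderiv\lambda_0.v+\lambda_0\verderiv v^*.\horderiv v\bigr).
\]
The order-$\h$ part of $\sigma_\h(V^*V)=\Id$ gives the constraint $v_1^*v+v^*v_1=i\trace(\verderiv v^*.\horderiv v)$, so the subprincipal contribution $\h\lambda_0(v_1^*v+v^*v_1)$ exactly cancels the unwanted trace $-i\h\lambda_0\trace(\verderiv v^*.\horderiv v)$, yielding~\eqref{Z-itwine-mainsymb}.

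The main obstacle is twofold: the bookkeeping of semiclassical and differential orders through the iteration while preserving self-adjointness of $B$ and skew-adjointness of $R$; and, in the leading symbol calculation, verifying the cancellation of the $\lambda_0\verderiv v^*.\horderiv v$ term, which requires simultaneously invoking the isometry constraint $V^*V=\Id$ and the eigenvalue identity.
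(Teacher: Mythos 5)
Your construction of $R$ and $B$ follows the paper's proof essentially verbatim: the off-diagonal block $U_0ZU_1+U_1ZU_0$ lies in $\Pskm{-1}{0}$ because $z$ commutes with $u_0,u_1$, and it is removed order by order by solving, on the symbol level, the Sylvester equation between $z\restrict{E_0}=\lambda_0$ and $z\restrict{E_1}$ (uniquely solvable since these spectra are disjoint on $K$), with $R$ skew by construction, the diagonal residues absorbed into $B$, and asymptotic summation at the end; \eqref{Z-itwine} then follows from $U_0V=V$, exactly as in the paper. The symbol computation is also the paper's route, but two of your intermediate identities are off: by \eqref{maingeosymb-adjoint} the leading symbol of $V^*$ is $v^*+\h v_1^*-i\h\trace(\verderiv\horderiv v^*)$, so the order-$\h$ part of $\sigma_\h(V^*V)=1$ gives $v_1^*v+v^*v_1=i\trace\big(\verderiv\horderiv v^*.v+\verderiv v^*.\horderiv v\big)$, not just $i\trace(\verderiv v^*.\horderiv v)$, and correspondingly your list of composition-correction traces in $\sigma_\h(V^*ZV)$ is missing the adjoint-correction term $-i\h\lambda_0\trace(\verderiv\horderiv v^*.v)$. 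Since $zv=\lambda_0 v$, these two omissions compensate exactly, so your final formula \eqref{Z-itwine-mainsymb} is still correct; the paper keeps both terms explicitly (writing the leading symbol of $V$ as $(1+\h\gamma)v+\h w$ with $v^*w=0$), which is the only substantive difference from your argument.
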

\begin{proof}
To prove \eqref{Z-decomp} we adopt ideas of \cite{Stefanov00LowerBd}.
The operators $U_0$ and $U_1$ are orthogonal projectors,
$U_j^*=U_j=U_j^2$, and $U_1U_0=0$.
Write $Z=U_0ZU_0+U_1ZU_1+B$, where $B=U_0ZU_1+U_1ZU_0$.
Since $u_jz=zu_j$ and $u_1u_0=0$ we have $B\in\Pskm{-1}{0}$.
Let $\h b$, $b=b^*\in\Sym{0}$, denote the principal symbol of $B$.
Define the section $z_j=z\restrict{E_j}$ of $\End(E_j)$.
The spectra of $z_0$ and $z_1$ are disjoint.
Therefore the Sylvester equation $sz_0-z_1s=u_1bu_0$ has a
unique solution $s$ which is a section of $\Hom(E_0,E_1)$.
We extend $s$ to a section of $\End(\pi^*\CT_X M)$ by $s=u_1su_0$.
Then $sz-zs=u_1bu_0$, and $s\in\Sym{-1}$.
Define $S=\Op_\h(\h s)$ and $R=U_0S^*U_1-U_1SU_0$.
Then, $R^*=-R$ and
$B=U_0BU_1+U_1BU_0 \equiv R^*Z+ZR$ modulo $\Pskm{-2}{-1}$.
Therefore, with a different $B\in\Pskm{-N}{-N+1}$, $N=2$, and $Z_0=Z_1=Z$, we have
\begin{equation}
\label{Z-decomp-step}
(\Id-R^*)Z(\Id-R)= U_0Z_0U_0 + U_1Z_1U_1 +B.
\end{equation}
If $N\geq 2$ then, using the same construction as before,
we find $R_1\in\Pskm{-N}{-N}$, $R_1^*=-R_1$ such that
$U_0BU_1+U_1BU_0 \equiv R_1^*Z+ZR_1$ modulo $\Pskm{-N-1}{-N}$.
Hence we get \eqref{Z-decomp-step} with $R$ and $Z_j$ replaced by
$R+R_1$ and $Z_j+B$, respectively.
The new error $B$ belongs to $\Pskm{-N-1}{-N}$.
Iterating this construction and using asymptotic summation \eqref{Z-decomp} follows.
Since $U_0V=V$, \eqref{Z-decomp} implies \eqref{Z-itwine}.

Observe that the leading symbols of $V^*(Z+B)V$ and $V^*ZV$ are equal.
The principal symbol equals $(v|zv)=\lambda_0$ because $|v|=1$.
We write the leading symbol of $V$ as $(1+\h\gamma)v+ \h w$, where $v^*w=(w|v)=0$.
Note $(v|zw)=0$.
A straightforward symbol computation, using
\eqref{geosymb-adjoint} and \eqref{geo-symb-compos}, gives
\begin{align*}
\sigma_\h(V^*ZV) &\equiv \lambda_0 +\h(z_- v|v) +\h(\gamma+\overline{\gamma})\lambda_0 \\
   &\phantom{==} -i\h\trace\big(v^*\verderiv z. \horderiv v +\verderiv\horderiv v^*.zv +\verderiv v^*.\horderiv zv\big)
\end{align*}
modulo $\bigoh(\h^2)$.
From $V^*V=1$ it follows that the leading symbol of $V^*V$ equals unity.
Since $|v|^2=1$ is the principal symbol, this implies
\[
\h(\gamma+\overline{\gamma}) -i\h\trace \big(\verderiv\horderiv v^*.v + \verderiv v^*. \horderiv v\big)=0.
\]
Therefore the expression for the symbol of $V^*ZV$ simplifies to
\begin{align*}
\sigma_\h(V^*ZV) &\equiv \lambda_0 +\h(z_- v|v) \\
   &\phantom{==} +i\h\trace\big(\lambda_0\verderiv v^*. \horderiv v
         - v^*\verderiv z. \horderiv v -\verderiv v^*.\horderiv zv\big)
\end{align*}
modulo $\bigoh(\h^2)$.
Using $\horderiv zv= \lambda_0\horderiv v+\horderiv\lambda_0.v$
we deduce \eqref{Z-itwine-mainsymb}.
\end{proof}

Denote $\Psmphg{m}$ the class of $\h$-independent \psdiff\ operators $A$
with polyhomogeneous symbols, $a\sim\sum_{j\leq m} a_j$,
$a_j$ homogeneous of degree $j$.
When regarded as an $\h$-dependent operator, $A\in\Pskm{m}{m}$ has 
the classical symbol $\sum_{j\leq m} \h^{-j} a_j$.
In the next lemma, following 
\cite{PopovVodev99ResoTransm} and \cite{Stefanov00LowerBd},
we use this relation to conjugate the scalar operator
constructed in Lemma~\ref{lemma-diagonalize} into $\h P-1$,
where $P$ is $\h$-independent.

Recall that $\halfdens\ueber X$ denotes the bundle of half-densities.

\begin{lemma}
\label{lemma-P}
There is a selfadjoint operator $P\in\Psmphg{1}(X;\halfdens)$ with principal symbol $p$,
and an operator $A\in\Pskm{0}{0}$ from half-density sections to scalar functions,
elliptic near $\Sigma$,
such that $A^*V^*(Z+B)VA=\h P-1$ in a neighbourhood of $\Sigma$.
The subprincipal symbol of $P$ equals, on $\Sigma$,
\begin{equation}
\label{subprinc-P}
\begin{aligned}
\subprinc{p} &= {(\dot{z}v|v)}^{-1}\big(\RE (z_- v|v) +\IM\trace(v^*\verderiv z.\horderiv v)\big)  \\
  &\phantom{==} + \IM \trace\horderiv p.\verderiv v^*.v.
\end{aligned}
\end{equation}
Here $\dot{z}$ denotes the radial derivative of $z$.
If instead of $v$ another unit section $\tilde{v}=e^{i\varphi} v$ of $\Kernel z\ueber \Sigma$
is used to define $V$, and thus $P$, then the principal symbol of $P$ remains unchanged,
whereas the subprincipal changes to
$\subprinc{\tilde{p}} = \subprinc{p} +\{p,\varphi\}$
on $\Sigma$.
Here $\{p,\varphi\}$ denotes the Poisson bracket.
\end{lemma}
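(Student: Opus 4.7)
The construction proceeds in two stages: a principal-symbol normalization that determines $A$ up to leading order, followed by the extraction of $\subprinc p$ from the subprincipal symbols. The starting observation is that $\lambda_0$ vanishes simply on $\Sigma$: by Proposition~\ref{prop-z}~(v), the radial derivative $\dot\lambda_0 = (\dot z v|v)$ is strictly positive on $\Sigma$, while $p - 1$ has radial derivative $1$ there. Hence $\lambda_0 = e\cdot (p-1)$ on a neighbourhood of $\Sigma$, for a smooth positive function $e$ with $e|_\Sigma = (\dot z v|v)$. My first step is to pick any scalar $A_0 \in \Pskm{0}{0}$ whose principal symbol is $a_0 = e^{-1/2}$. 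By \eqref{maingeosymb-compos}, $A_0^* V^*(Z+B) V A_0$ has principal symbol $e^{-1}\lambda_0 = p - 1$, agreeing with that of $\h P - 1$ for any $P$ with classical principal symbol $p$.

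To extract $\subprinc p$, I would expand the leading symbol of $A_0^* V^*(Z+B) V A_0$ using \eqref{maingeosymb-compos} and \eqref{maingeosymb-adjoint}, inserting \eqref{Z-itwine-mainsymb} for the leading symbol of $V^*(Z+B)V$. On $\Sigma$, where $\lambda_0 = 0$, the $\h$-coefficient involves three kinds of terms: the main term $e^{-1}(z_- v|v)$; the trace term $-ie^{-1}\trace(v^*\verderiv z.\horderiv v + \verderiv v^*.\horderiv\lambda_0.v)$; and correction terms arising from vertical and horizontal derivatives of the real scalar $a_0$. Because $a_0$ is real, the latter corrections are purely imaginary and drop out on taking real parts. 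Using the identity $\horderiv\lambda_0 = e\,\horderiv p$ on $\Sigma$ (which follows from $\lambda_0 = e(p-1)$), the contribution $e^{-1}\IM\trace(\verderiv v^*.\horderiv\lambda_0.v)$ becomes $\IM\trace(\horderiv p.\verderiv v^*.v)$. Since $Z$ is selfadjoint modulo negligible operators (Lemma~\ref{Z-op-mainsymb}) and $B = U_0 Z U_1 + U_1 Z U_0$ is symmetric, so is $A_0^* V^*(Z+B) V A_0$; this forces $P$ to be selfadjoint with real subprincipal symbol, and matching real parts with those of $\h P - 1$ then yields~\eqref{subprinc-P}.

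Once principal and subprincipal symbols have been matched, I would proceed by iteration: if $A_k$ has been constructed so that $A_k^*V^*(Z+B)VA_k - (\h P_k - 1) \in \Pskm{-k}{-k+1}$, the error can be absorbed by modifying $A_k$ and $P_k$ with strictly lower-order terms, in the manner of Proposition~\ref{exist-dirichlet-param}. Asymptotic summation produces the required $A\in\Pskm{0}{0}$ and an $\h$-independent polyhomogeneous symbol for $P$, with selfadjointness preserved at each step.

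For the dependence on the unit section, writing $\tilde v = e^{i\varphi}v$ with $\varphi$ extended smoothly to a neighbourhood of $\Sigma$, we have $\tilde V \equiv MV$ modulo an operator of lower order, where $M = \Op_\h(e^{i\varphi})$ is unitary to leading order. Running the construction with $\tilde v$ in place of $v$ yields $\tilde P \equiv M^{-1}PM$ modulo negligible operators. The principal symbol is unchanged, while an Egorov-type calculation via \eqref{maingeosymb-compos} gives $\subprinc{\tilde{p}} = \subprinc{p} + \{p,\varphi\}$ on $\Sigma$. I expect the main obstacle to be the bookkeeping of the many trace terms in the subprincipal calculation, and in particular the verification that all $a_0$-generated contributions are purely imaginary and hence drop out, leaving the clean formula~\eqref{subprinc-P}.
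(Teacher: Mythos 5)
Your proposal is correct and follows essentially the same route as the paper: normalize $\lambda_0$ to $p-1$ via $a_0=e^{-1/2}$, exploit selfadjointness so that only the real part of the $\h$-coefficient of the conjugated leading symbol \eqref{Z-itwine-mainsymb} matters (the corrections generated by the real scalar $a_0$ being purely imaginary), use $\horderiv\lambda_0=e\,\horderiv p$ on $\Sigma$, and then remove the remaining, $(p-1)$-divisible errors by conjugations of the form $1-\h R$ followed by asymptotic summation — this is exactly the paper's scheme, which makes the "match real parts" step explicit by comparing with a selfadjoint $\tilde P_1$ of vanishing subprincipal symbol and splitting the residual symbol as $p_0+2(p-1)r_{-1}$. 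The only divergence is the change-of-section assertion, which you obtain by an Egorov-type conjugation $\tilde V\approx VM$, $\tilde P\approx M^{-1}PM$ with $M=\Op_\h(e^{i\varphi})$, whereas the paper simply substitutes $e^{i\varphi}v$ into \eqref{subprinc-P} and uses $v^*\verderiv z.v=\verderiv\lambda_0$; both yield $\{p,\varphi\}$, your route implicitly using that $\subprinc{\tilde{p}}\restrict{\Sigma}$, as given by the already established formula \eqref{subprinc-P}, depends only on the unit section and not on the other choices made in the construction.
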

Obviously, $P$ is elliptic and bounded from below.
\begin{proof}
The radial derivatives of $p$ and of $\lambda_0=(zv|v)$ are, at $\Sigma$,
equal to $1$ and $(\dot{z}v|v)>0$, respectively.
Therefore, near $\Sigma$, $a_0^2\lambda_0=p-1$
for some $a_0\in\Cinfty$, $a_0>0$.
Set $\tilde{Z}=A_0^*V^*(Z+B)VA_0$, $A_0=\Op_\h(a_0)$.
Choose $\tilde{P}_1\in\Pskm{0}{1}$ (formally) selfadjoint with
leading symbol $p-i\h\trace(\verderiv\horderiv p)/2$.
The selfadjoint operators  $\tilde{Z}$ and $\tilde{P}_1-1$ have the same principal symbol, $p-1$.
Therefore, the imaginary parts of their leading symbols are equal.
It follows that the principal symbol $q_0$ of
$\tilde{Q}_0=\tilde{Z}-(\tilde{P}_1-1)\in\Pskm{0}{0}$
equals, on $\Sigma$, $a_0^2={(\dot{z}v|v)}^{-1}$ times
the real part of the coefficient of $\h$ in \eqref{Z-itwine-mainsymb}.

Define $p_0\in\Cinfty(T^*X\setminus 0)$, homogeneous of degree $0$,
and $r_{-1}\in\Sym{-1}$ such that $q_0=p_0+2(p-1)r_{-1}$ holds in a neighbourhood of $\Sigma$.
Then
\[
(1-\h\Op_\h(r_{-1})^*)\tilde{Z}(1-\h\Op_\h(r_{-1})) = \tilde{P}_1+\h\tilde{P}_0 -1+\h \tilde{Q}_{-1},
\]
where $\tilde{P}_0$ is selfadjoint with principal symbol $p_0$.
Proceeding inductively, we obtain selfadjoint operators $\tilde{P}_j\in\Pskm{0}{j}$
with classical symbols such that, for $N<1$,
\[
(1-\h R_N^*)\tilde{Z}(1-\h R_N) = \h \sum_{N<j\leq 1}\h^{-j} \tilde{P}_j -1 +\h^{-N} \tilde{Q}_N,
\]
where $\tilde{Q}_N\in\Pskm{0}{N}$, $R_N\in\Pskm{0}{-1}$.
Therefore, there is an $\h$-independent operator $P\in\Psmphg{1}$ such that
$(1-\h R^*)\tilde{Z}(1-\h R) = \h P -1$ near $\Sigma$.
Moreover, $P\equiv \tilde{P}_1+\h\tilde{P}_0$ modulo $\Pskm{-2}{-1}$.
The symbol of $P$ equals $p-i\trace(\verderiv\horderiv p)/2 +p_0$ modulo $\Sym{-1}$.
It follows from Corollary~\ref{cor-subprinc-symbol},
or rather its analogue for $\h$-independent operators,
that $p$ is the principal symbol of $P$ and
$\subprinc{p}=p_0$ its subprincipal symbol.
By construction $p_0=q_0$ on $\Sigma$.
Formula \eqref{subprinc-P} follows from the formula for $q_0$ mentioned earlier.

Note $\{p,\varphi\} = \trace\big(\verderiv p.\horderiv \varphi - \horderiv p.\verderiv \varphi\big)$.
The last assertion of the lemma follows from \eqref{subprinc-P},
using $v^*\verderiv z.v= \verderiv \lambda_0$.
\end{proof}

\begin{proof}[Proof of Theorem~\ref{thm-reduct}]
The following assertions hold microlocally in a neighbourhood of $\Sigma$.
It follows from lemmas~\ref{lemma-diagonalize} and \ref{lemma-P}
that, if $A^{-*}$ denotes a parametrix of $A^*$,
$(\Id-R^*)Z(\Id-R)VA =VA^{-*}(\h P -1)$.
Define $J_\h=(\Id-R)VA$ and $\tilde{J}_\h=(\Id-R^*)^{-1} VA^{-*}$.
We have $J_\h,\tilde{J}_\h\in\Pskm{0}{0}$, $\tilde{J}_\h-J_\h\in\Pskm{-1}{-1}$.
Moreover, $J_\h^* J_\h$ is elliptic.
By definition of $Z$, $TB_\h J_\h=\tilde{J}_\h (P-\h^{-1})$,
where $B_\h$ is the Dirichlet parametrix given in Proposition~\ref{exist-dirichlet-param}.
Combining the results in section~\ref{sect-parametrix}
with lemmas~\ref{lemma-diagonalize} and \ref{lemma-P},
the theorem follows.
\end{proof}

\section{Construction of Quasimodes}
\label{sect-modes}

Given $P$ of Theorem~\ref{thm-reduct} we associate to the sequence
of positive eigenvalues of $P$ a sequence of quasimodes of $L_T$.
We follow \cite[sect.~4]{Stefanov00LowerBd}, differing in some details, however.

Let $P$, $B_\h$, and $J_\h$ as in Theorem~\ref{thm-reduct}.
Assume given a sequence of quasimodes, $(\mu_j)$, with almost orthogonal quasimodes states:
\begin{equation}
\label{qm-of-P}
Pf_j-\mu_j f_j=\bigoh_{\Cinfty}(\h_j^{\infty}),
\quad
(f_j|f_k) -\delta_{jk}=\bigoh((\h_j+\h_k)^{\infty}),
\end{equation}
$f_j\in\Cinfty(X;\halfdens)$, $0<\mu_j\leq\mu_{j+1}\to\infty$, $\h_j=\mu_j^{-1}$.

We define quasimode states for the traction-free boundary problem.
By Theorem~\ref{thm-reduct} the traction
$t_j= T B_{\h_j} J_{\h_j} f_j=\bigoh_{\Cinfty}(\h_j^\infty)$.
Choose $u_j'=\bigoh_{\Cinfty}(\h_j^\infty)$ satisfying
$A_0(0)\partial_r u_j'\restrict{X}+t_j=0$ and $u_j'\restrict{X}=0$.
Define $u_j\in\Ccinfty(M;\CT M)$,
\begin{equation}
\label{QM-uj}
u_j=\h_j^{-1/2} \big(B_{\h_j} J_{\h_j} f_j + u_j').
\end{equation}
By Theorem~\ref{thm-reduct},
\begin{equation}
\label{QM-uj-of-LT}
L u_j-\mu_j^{2} \rho u_j =\bigoh_{\Cinfty}(\h_j^{\infty}),
\quad Tu_j =0,
\end{equation}
and $\|u_j\|_{L^2}=\bigoh(1)$.
We can assume that the $u_j$ are supported in a given neighbourhood of $X$.
Using the ellipticity of $L$, we deduce $\|u_j\|_{H^2}=\bigoh(\h_j^{-2})$.

To go from quasimodes to eigenvalues or, in scattering theory, to resonances,
it is desirable to be able to decompose the quasimodes into well-separated clusters.
In addition, the quasimode states of each cluster should be
linearly independent, and remain so after applying small perturbations.

\begin{prop}
\label{prop-qm-indep}
Let the assumptions of Theorem~\ref{thm-reduct} hold.
Assume given quasimodes $\mu_j=\h_j^{-1}>0$ of $P$ as in \eqref{qm-of-P},
and define $u_j$ as in \eqref{QM-uj}.
Then \eqref{QM-uj-of-LT} holds.
Let $m>\dim X$.
There exist $\delta>0$ and a covering of $\{\mu_j\}$
by a sequence of intervals $[a_k,b_k]\subset\R_+$, such that
\[
b_k+2\delta b_k^{-m-\dim X}<a_{k+1}, \quad b_k-a_k<b_k^{-m}.
\]
Let $w_j\in H^2(M;\CT M)$ such that, for some $N\geq 0$,
\[
\|w_j\|_{H^2}=\bigoh(\h_j^{-2-N}), \quad
w_j - u_j=\bigoh_{L^2}(\h_j^{2\dim X+N}).
\]
Then, for large $k$,
$\{w_j\}_{a_k\leq\mu_j\leq b_k}$
is linearly independent.
\end{prop}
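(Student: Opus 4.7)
The plan is to prove the three claims of the proposition in order: the quasimode equation \eqref{QM-uj-of-LT}, the existence of the cluster cover, and the linear independence of $\{w_j\}_{j\in J_k}$ where $J_k=\{j:\mu_j\in[a_k,b_k]\}$.

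The first claim follows directly from Theorem~\ref{thm-reduct}. Multiplying the identity $(\h^2L-\rho)B_\h\equiv 0$ by $\h_j^{-2}$ and applying it to $J_{\h_j}f_j$ yields $(L-\mu_j^2\rho)B_{\h_j}J_{\h_j}f_j\in\bigoh_{\Cinfty}(\h_j^\infty)$ since $\h_j^{-1}=\mu_j$. The intertwining $TB_\h J_\h\equiv\tilde J_\h(P-\h^{-1})$ together with \eqref{qm-of-P} gives $t_j=TB_{\h_j}J_{\h_j}f_j=\tilde J_{\h_j}(P-\mu_j)f_j+\bigoh_{\Cinfty}(\h_j^\infty)=\bigoh_{\Cinfty}(\h_j^\infty)$, so a standard normal-coordinate extension $u_j'=\bigoh_{\Cinfty}(\h_j^\infty)$, supported in a collar with $u_j'|_X=0$ and $A_0(0)\partial_r u_j'|_X=-t_j$, exactly cancels the residual traction and gives \eqref{QM-uj-of-LT}.

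For the cover, since $P\in\Psm{1}(X;\halfdens)$ is elliptic and self-adjoint with positive principal symbol, Weyl's law yields $N_P(\lambda+\lambda^{-m})-N_P(\lambda)\le C\lambda^{\dim X-1}$ for $\lambda$ large. Partition such a window into sub-intervals of length $2\delta\lambda^{-m-\dim X}$; this produces $\lambda^{\dim X}/(2\delta)$ sub-intervals, which for $\lambda$ large vastly exceeds $C\lambda^{\dim X-1}$, so by pigeonhole most sub-intervals are disjoint from $\spec P$. A greedy scan --- beginning at the smallest as-yet-unclustered $\mu_j$ and extending until meeting the first empty sub-interval --- then produces clusters with $b_k-a_k\le b_k^{-m}$ terminated by gaps $a_{k+1}-b_k\ge 2\delta b_k^{-m-\dim X}$, which are the two stated inequalities.

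The linear-independence claim proceeds via a Gram-matrix argument. The key technical input is the almost orthonormality
\[
(u_i\mid u_j)_{L^2(M;\CT M)}=\delta_{ij}+\bigoh(\h^{N_0}),\qquad i,j\in J_k,
\]
for arbitrary $N_0$, where $\h\asymp b_k^{-1}$. Granting this and using $\|u_j\|_{L^2}=\bigoh(1)$ together with the perturbation hypothesis, one has $(w_i\mid w_j)=\delta_{ij}+\bigoh(\h^{\min(N_0,\,2\dim X+N)})$. The Weyl bound $|J_k|\le Cb_k^{\dim X-1}$ controls the operator norm of the perturbation $G^w_k-I$ by $|J_k|$ times this entrywise estimate, so $G^w_k\succeq\tfrac12 I$ for $k$ sufficiently large, and the $w_j$'s are linearly independent.

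The main obstacle is establishing the almost orthonormality. The difficulty is that $u_i$ and $u_j$ carry different semiclassical scales $\h_i\neq\h_j$, so the Poisson-operator estimates of Section~\ref{sect-parametrix} do not apply verbatim. The within-cluster bound $|\h_i-\h_j|\le\bigoh(b_k^{-m-2})$ must be exploited to reduce to a common scale by writing $\h_i$ in terms of $\h_j$ and absorbing the discrepancy into symbols; the composition $B_{\h_i}^*B_{\h_j}$ is then analysed by the stationary-phase methods underlying Lemma~\ref{lemma-tang-compos-poisson}. Combined with the ellipticity of $J_\h^*J_\h$ from Theorem~\ref{thm-reduct}, this reduces the inner product to $(f_i\mid f_j)_{L^2(X;\halfdens)}$ modulo $\bigoh(\h^\infty)$, and the almost orthonormality of the $f_j$'s in \eqref{qm-of-P} closes the argument.
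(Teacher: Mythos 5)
The quasimode equation and the cluster construction are fine (your pigeonhole via Weyl counting is essentially the argument in the paper, transferred from $\{\mu_j\}$ to $\spec P$). The gap is in the linear-independence step: your central claim, that $(u_i\mid u_j)_{L^2(M)}=\delta_{ij}+\bigoh(\h^{N_0})$ for arbitrary $N_0$ because the inner product "reduces to $(f_i\mid f_j)$ modulo $\bigoh(\h^\infty)$", is unjustified and in general false. Even at a common scale $\h_i=\h_j=\h$ one has, modulo negligible terms, $(u_i\mid u_j)=\big(A_\h f_i\mid f_j\big)$ with $A_\h=\h^{-1}J_\h^{*}B_\h^{*}B_\h J_\h$. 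This is a degree-zero $\h$-\psdiff\ operator (consistent with $\|B_\h\|=\bigoh(\h^{1/2})$) whose principal symbol at $\Sigma$ is a non-constant positive function: it carries the normal profile of the Poisson operator (an integral of $e^{is q^{*}}\!\cdot\, e^{isq}$-type factors and the decay weight) and the factor $J_\h=(\Id-R)VA$, which is elliptic but not unitary. Matrix elements $(A_\h f_i\mid f_j)$ of such an operator between two distinct, nearly degenerate quasimode states of $P$ need not be small: within a cluster $|\mu_i-\mu_j|<b_k^{-m}$, and $P$ may have genuinely multiple eigenvalues, so the identity $(A_\h f_i\mid f_j)=([P,A_\h]f_i\mid f_j)/(\mu_j-\mu_i)$ (for exact eigenfunctions) gives no decay at all. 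The almost orthogonality of the $f_j$ on the boundary therefore does not transfer to near-orthonormality of the $u_j$ in $L^2(M)$ — note the paper only claims $\|u_j\|_{L^2}=\bigoh(1)$ — and the diagonal-dominance Gram argument collapses at this point. (The scale mismatch is a secondary issue: $|\h_i-\h_j|\lesssim a_k^{-2}b_k^{-m}$ yields errors in powers of $\h^{m+1}$, not $\bigoh(\h^{\infty})$, though since $m>\dim X$ that part could be repaired.)

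The paper avoids interior Gram matrices altogether. Since $J_\h^{*}J_\h$ is elliptic at $\Sigma$, one chooses a left inverse $K_\h$ with $K_\h J_\h=\Id$ microlocally at $\Sigma$, arranged so that $\h\mapsto K_\h$ is Lipschitz in $L^2$-operator norm; then $\h_j^{1/2}K_{\h_j}\gamma u_j=f_j+\bigoh_{\Cinfty}(\h_j^{\infty})$, where $\gamma$ is the boundary trace, and the within-cluster bound $|\h_\ell-\h_j|\le Ca_k^{-2}b_k^{-m}$ allows freezing the scale at one $\h_\ell$ per cluster. Combining with $\|\gamma w_j-\gamma u_j\|_{L^2}=\bigoh(\h_j^{1+\dim X})$ (interpolation $\|v\|_{H^1}^2\le C\|v\|_{L^2}\|v\|_{H^2}$ plus the trace theorem) gives $\|\h_j^{1/2}K_{\h_\ell}\gamma w_j-f_j\|_{L^2}=\bigoh(\h_\ell^{\varepsilon+\dim X})$ for all $j,\ell$ in the cluster; Stefanov's lemma on almost orthonormal systems then gives linear independence of $\{K_{\h_\ell}\gamma w_j\}_j$, hence of $\{w_j\}_j$, since $K_{\h_\ell}\gamma$ is linear. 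If you insist on a quadratic-form argument in the interior, you would need, in place of near-orthonormality, a uniform lower bound $(A_\h f\mid f)\ge c\|f\|^2$ for $f$ in the span of a cluster (a G\aa rding-type inequality exploiting positivity of the symbol of $A_\h$ at $\Sigma$), together with the scale-mismatch control — a substantially different argument from the one you sketched.
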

\begin{proof}
Property \eqref{QM-uj-of-LT} is clear by the arguments already given.

It is well-known that a quasimode sequence \eqref{qm-of-P}
is asymptotic to a subsequence of the sequence of eigenvalues of $P$.
The latter satisfies the Weyl asymptotics.
Hence we have a Weyl estimate $j\leq C\mu_{j}^{\dim X}$.
It follows that every interval $[a,b]$, $1\leq b$, of length $> L$
has a subinterval of length $\geq L b^{-\dim X}/C$
which contains no quasimode $\mu_j$.
The existence of intervals $[a_k,b_k]$ having the stated
properties follows from this observation.
Compare \cite[Proof of Theorem 2]{Stefanov99Quasimodes}.
Define the set of indices of the $k$-th cluster: $I_k=\{j\setof \mu_j\in[a_k,b_k]\}$.

Choose a left inverse $K_\h\in\Pskm{0}{0}(X;\CT_X M,\halfdens)$
of $J_\h$, $K_\h J_\h=\Id$ at $\Sigma$.
Since $J^*_\h J_\h$ is elliptic at $\Sigma$, $K_\h$ is readily found.

Denote $\gamma:v\mapsto v\restrict{X}$ the trace map.
By \eqref{QM-uj}, $\h_j^{1/2}\gamma u_j= J_{\h_j} f_j+\gamma u_j'$.
From \eqref{qm-of-P} it follows that $\WF_{\h_j} f_j\subset\Sigma$.
Therefore,
\[
\h_j^{1/2} K_{\h_j} \gamma u_j = f_j + \bigoh_{\Cinfty}(\h_j^{\infty}).
\]
By the remark after Lemma~\ref{def-geom-Oph} we can assume that
there exists a constant $C$ such that for all $j,\ell\in I_k$, $k\in\N$,
\[
\|K_{\h_\ell} -K_{\h_j}\|_{L^2\to L^2}\leq C b_k |\h_\ell -\h_j|.
\]
Using $b_k |\h_\ell -\h_j| \leq b_k a_k^{-2}|\mu_\ell-\mu_j| \leq a_k^{-2} b_k^{-m+1}$,
it follows that
\[
\h_j^{1/2} \|(K_{\h_\ell} -K_{\h_j})\gamma u_j\|_{L^2}= \bigoh(b_k^{-m}),
\quad j,\ell\in I_k,
\]
if $k$ is sufficiently large.
The assumptions on $w_j$ imply $\|w_j-u_j\|_{H^1}= \bigoh(\h_j^{1+\dim X})$.
Here we use the estimate $\|v\|_{H^1}^2\leq C\|v\|_{L^2}\|v\|_{H^2}$.
Applying the trace theorem,
$\|\gamma w_j- \gamma u_j\|_{L^2} =\bigoh(\h_j^{1+\dim X})$.
Summarizing the estimates, we have shown that,
for some $\varepsilon>0$,
\[
\|\h_j^{1/2} K_{\h_\ell} \gamma w_j -f_j\|_{L^2} = \bigoh(\h_{\ell}^{\varepsilon+\dim X}),
\quad j,\ell\in I_k.
\]
Because of almost orthogonality of the $f_j$ and the Weyl estimate,
we can apply \cite[Lemma 4]{Stefanov99Quasimodes}.
We obtain, for every $\ell\in I_k$, the linear independence of
$\{K_{\h_\ell} \gamma w_j\}_{j\in I_k}$ when $k$ is sufficiently large.
Since $K_{\h_\ell} \gamma$ is linear, also
$\{w_j\}_{j\in I_k}$ is linearly independent.
\end{proof}

\begin{proof}[Proof of Corollary~\ref{cor-ev-asymp}]
We apply Proposition~\ref{prop-qm-indep} with $\mu_j\uparrow\infty$
the sequence of positive eigenvalues of $P$, counted with multiplicities,
and $\{f_j\}$ a corresponding orthonormal system of eigenvectors.
Fix $m>\dim X$.
Let $[a_k,b_k]$ be the intervals, clustering $\{\mu_j\}$, given in the proposition.
The quasimode states defined in \eqref{QM-uj} belong to the domain
of the selfadjoint operator $L_T$.
Let $\pi_k$ denote the spectral projector for $L_T$ of the interval $[a_k',b_k']$,
where $a_k'=a_k-\delta b_k^{-m-\dim X}$, $b_k'=b_k+\delta b_k^{-m-\dim X}$.
The intervals $[a_k',b_k']$ are pairwise disjoint.
Set $w_j=\pi_k u_j$ if $\mu_j\in[a_k,b_k]$.
A well-known argument, using the spectral theorem, gives
\[
\delta^{2}b_k^{-2m-2\dim X}\|w_j-u_j\|^2_{L^2}\leq \|(L_T-\mu_j^2)u_j\|^2_{L^2}
=\bigoh(b_k^{-\infty})
\]
if $\mu_j\in[a_k,b_k]$.
Since $L_T$ is elliptic, we have $\|w_j\|_{H^2}=\bigoh(\mu_j^2)$.
Now Proposition~\ref{prop-qm-indep}, with $N=0$, implies that, for $k$ sufficiently large,
the rank of $\pi_k$ equals $\sharp\{j\setof\mu_j\in[a_k,b_k]\}$.
Hence an increase by $n$ of $N_P$ over $[a_k,b_k]$ leads to
an increase $\geq n$ of $N_{L_T}$ over $[a_k',b_k']$.
Taking into account the widths of the intervals, the corollary follows.
\end{proof}

\begin{remark}
The foregoing arguments also apply to give lower bounds
for the counting function of resonances.
In this case, $\pi_k$ is the projector onto the space of resonant states
which correspond to resonances in rectangles $[a_k,b_k]+i[0,s_k]$.
To satisfy the assumptions in Proposition~\ref{prop-qm-indep}
for $w_j=\pi_k u_j$, one establishes resolvent estimates.
See \cite{StefanovVodev96Reson}, \cite{TangZworski98Quasimodes}, \cite{Stefanov99Quasimodes},
and \cite{Stefanov00LowerBd}, for ways from quasimodes to resonances.
The clustering method was developed in this context,
\cite{Stefanov99Quasimodes}, to handle multiplicities appropriately.
Resolvent estimates for anisotropic elastic systems are given in \cite{KawaNaka00Poles}.
\end{remark}

\section{The Isotropic Subprincipal Symbol}
\label{sect-isosubpr}

In this section we assume that the elastic medium is isotropic.
We evaluate the subprincipal symbol of $P$, $\subprinc{p}$,
starting from the general formula \eqref{subprinc-P}.

We continue with Example~\ref{exa-iso-1},
referring to the notation introduced there.
The kernel bundle $\Kernel z$ is a line subbundle of $V$,
the subbundle of $\CT_X M$ spanned by $\nu$, $\xih =\xi/|\xi|$.
Abbreviate \eqref{z-iso-block} and \eqref{iq-iso-block} as follows:
\[
(z)_{11} =
\left[\begin{array}{cc}
\zeta_1 & -i\zeta_2\\
i\zeta_2 & \zeta_3
\end{array}\right],
\quad
(iq)_{11} =
\left[\begin{array}{cc}
\kappa_{11} & -i\kappa_{12}\\
i\kappa_{21} & \kappa_{22}
\end{array}\right].
\]
It will be convenient to use the slownesses relative to the Rayleigh wave speed,
$\sigma_s=c_r/c_s$ and $\sigma_p=c_r/c_p$.
Then $t=\sigma_s^2$, $ut=\sigma_p^2$ on $\Sigma=\{c_r|\xi|=1\}$.
Moreover, we set $\tau_s=(1-\sigma_s^2)^{1/2}$, $\tau_p=(1-\sigma_p^2)^{1/2}$,

We first show how to evaluate $(z_-v|v)$,
$v\in\Kernel z$, $z_-$ as in \eqref{sub-z-of-Z}.

\begin{lemma}
\label{lemma-X-iso}
Set $K=(iq)_{11}$.
Define $Y_j$ by \eqref{Y1}, \eqref{Y2}, and \eqref{Y3}.
Let $X=(x_{jk})$ the selfadjoint $2\times 2$ matrix which is the unique solution of
\begin{equation}
\label{sylv-X}
XK+K^* X = -2 Y_1 -Y_2-Y_2^* + Y_3+Y_3^*.
\end{equation}
Let $v=v_1\nu+v_2\xih\in\Kernel z$.
Then
\[
2\RE(z_-v|v)= x_{11} |v_1|^2 + x_{22} |v_2|^2 + 2\RE x_{12}\bar{v_1}v_2.
\]
\end{lemma}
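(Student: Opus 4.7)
The plan is to symmetrize the Sylvester-type equation \eqref{sub-z-of-Z} for $z_-$ and then restrict it to the invariant subbundle $V\subset \pi^*\CT_X M$ spanned by $\nu$ and $\xih$. First I would rewrite the left-hand side in terms of $K=iq$. Since $K^*=-iq^*$, a short calculation gives $z_-q-q^*z_-=-i(z_-K+K^*z_-)$, so \eqref{sub-z-of-Z} is equivalent to
\[
z_-K+K^*z_-=iR,
\]
with $R$ denoting the right-hand side of \eqref{sub-z-of-Z}. Taking adjoints and adding, the Hermitian matrix $W=z_-+z_-^*$ satisfies
\[
WK+K^*W=-2\IM R,
\]
where now both sides are Hermitian. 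This is the only equation needed, because for any $v\in V$ one has $(Wv|v)=2\RE(z_-v|v)$.

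The next step is to restrict to $V$. Since $q$ preserves $V$ in the isotropic setting (Example~\ref{exa-iso-1}), so does $K$, and the $(1,1)$-block of the symmetrized equation reads $XK_{11}+K_{11}^*X=(-2\IM R)_{11}$ with $X=W_{11}$ and $K_{11}=(iq)_{11}$ as in \eqref{iq-iso-block}. Unique solvability of this $2\times 2$ Sylvester equation follows from $\spec q\subset\C_-$: this places $\spec K_{11}$ in the open right half-plane and $\spec(-K_{11}^*)$ in the open left, so the map $X\mapsto XK_{11}+K_{11}^*X$ is invertible on Hermitian matrices. Since $\Kernel z\subset V$, the claimed formula for $2\RE(z_-v|v)$ follows by evaluating the Hermitian sesquilinear form $(Xv|v)$ on $v=v_1\nu+v_2\xih$.

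It then remains to compute $(-2\IM R)_{11}$ and match the three natural pieces $-2Y_1$, $-(Y_2+Y_2^*)$, $(Y_3+Y_3^*)$ appearing in \eqref{sylv-X}. The already-Hermitian terms $i\trace(S)z$ and $i\partial_r z$ produce the Hermitian contributions $-2\trace(S)z$ and $-2\partial_r z$ under $-2\IM$, and together with the symmetric part of $-a_{2-}$ they are absorbed into $-2Y_1$. The term $-a_{1-}q$ rewritten via $q=-iK$ yields $ia_{1-}K$, whose $-2\IM$ equals $-(a_{1-}K+K^*a_{1-})$, which (after restriction to $V$ and inclusion of the antisymmetric part of $-a_{2-}$) takes the form $-(Y_2+Y_2^*)$ with $Y_2$ the natural one-sided restriction. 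Similarly, the horizontal-vertical trace $\trace(\verderiv q^*.\,a\horderiv q)$ splits naturally as $Y_3+Y_3^*$.

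The main obstacle is the bookkeeping in this last step: one has to split each of the five terms of $R$ into Hermitian and anti-Hermitian parts and choose one-sided representatives $Y_2$, $Y_3$ so that they have clean expressions in the basis $\{\nu,\xih\}$ using the block formulas \eqref{z-iso-block} and \eqref{iq-iso-block}. Once the definitions of $Y_1$, $Y_2$, $Y_3$ are fixed in this way, everything else is formal manipulation of a $2\times 2$ Sylvester equation.
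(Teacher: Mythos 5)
Your structural reduction is exactly the paper's: set $W=z_-+z_-^*$, note $2\RE(z_-v|v)=(Wv|v)$, symmetrize \eqref{sub-z-of-Z} into a Sylvester equation $WK+K^*W$ equals the Hermitian part of $i$ times the right-hand side, restrict to the $(1,1)$-block using that $q$ (hence $K=iq$) is block diagonal with respect to $V\oplus V^\perp$, get unique solvability from $\spec K_{11}\subset\{\RE>0\}$, and evaluate the resulting Hermitian form on $\Kernel z\subset V$. All of that is sound and coincides with the paper's argument.

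The gap is in the step you dismiss as bookkeeping, which is where the content of the lemma actually lies: the statement fixes $Y_1,Y_2,Y_3$ by the explicit formulas \eqref{Y1}, \eqref{Y2}, \eqref{Y3}, so one must prove that $\big(-2\IM R\big)_{11}$ equals $-2Y_1-Y_2-Y_2^*+Y_3+Y_3^*$ with precisely those matrices, and your sketch of how the terms distribute is not consistent with this. First, the $a_{2-}$ term: the symmetric (Hermitian) part of $a_{2-}$ contributes nothing at all to $-2\IM R$, and the antisymmetric part does not get absorbed into $Y_1$ or $Y_2$ — in the isotropic case $(a_{2-})_{11}$ is real symmetric in the frame $\nu,\xih$ (the gradients $\nabla\lambda,\nabla\mu$ entering $\divergence_X c$ are tangential, so the $\nu$-components vanish), hence the whole $a_{2-}$ term drops out of the $(1,1)$-block equation; this vanishing must be checked, since the stated $Y_1=(\trace(S)z+\partial_r z)_{11}$ and $Y_2=(a_{1-}\,iq)_{11}$ contain no $a_{2-}$ contribution. (Also, your symmetrization of $ia_{1-}K$ should read $-(a_{1-}K+K^*a_{1-}^*)$; $a_{1-}$ is not symmetric.) Second, $Y_3=\big(i\trace\verderiv(iq)^*.a\horderiv iq\big)_{11}$ is not a one-sided restriction of $(1,1)$-data: the derivatives $\horderiv(iq)$ and $\verderiv(iq)$ do not preserve the splitting $V\oplus V^\perp$, so the $(1,1)$-block of the trace term picks up contributions from the off-diagonal blocks $(\horderiv iq)_{21}$ and $(\verderiv iq)_{21}$ contracted through $(a)_{22}$ — this is exactly the $w_2^*\otimes w_1$ term in \eqref{Y3} — and computing them (via $\horderiv\nu=S$, $\horderiv\xih=0$, $\verderiv\nu=0$, $\verderiv\xih=|\xi|^{-1}(\Id-\xih\otimes\xih)$, and differentiating the entries of $(iq)_{11}$ in $c_s$, $c_p$, $|\xi|$) is the bulk of the paper's proof. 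Without these computations your argument establishes only that some Hermitian right-hand side works, not the lemma with $Y_1,Y_2,Y_3$ as defined by \eqref{Y1}--\eqref{Y3}.
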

\begin{proof}
Set $x=z_-+z_-^*$. Then $2\RE(z_-v|v)=(xv|v)$.
By \eqref{sub-z-of-Z}, $x$ satisfies the uniquely solvable
Sylvester equation $x(iq)+(iq)^*x=iy+(iy)^*$,
where $y$ equals the right-hand side of \eqref{sub-z-of-Z}.
Since $q$ leaves $V$ and $V^\perp$ invariant,
$X=(x)_{11}=(x_{jk})$ is the unique solution of \eqref{sylv-X}
provided the right-hand side of the equation equals $\big(iy+(iy)^*\big)_{11}$.
The latter holds if
\begin{equation*}
Y_1= (\trace(S) z +\partial_r z)_{11}, \;
Y_2=\big(a_{1-} iq\big)_{11},\;
Y_3= \big(i\trace \verderiv (iq)^*.a\horderiv iq\big)_{11}.
\end{equation*}
Observe that the $a_{2-}$ term of \eqref{sub-z-of-Z} drops out
because of the skewness of $(ia_{2-})_{11}$.
In the following we derive formulas for $Y_j$.

The basis vectors $\nu$ and $\xih$ do not depend on $r$.
Therefore, $(\partial_r z)_{11}= \partial_r (z)_{11}$.
We obtain
\begin{equation}
\label{Y1}
Y_1 = \trace(S)
\left[\begin{array}{cc}
\zeta_1 & -i\zeta_2\\ i\zeta_2 & \zeta_3
\end{array}\right]
+
\left[\begin{array}{cc}
\partial_r\zeta_1 & -i\partial_r\zeta_2\\ i\partial_r\zeta_2 & \partial_r\zeta_3
\end{array}\right].
\end{equation}

Using Lemma~\ref{lemma-a12-minus} and the remark following it we
obtain a formula for $(a_{1-})_{11}$.
Clearly, $(a_{1-}iq)_{11}=(a_{1-})_{11} (iq)_{11}$.
We derive
\begin{equation}
\label{Y2}
Y_2= 
\left[\begin{array}{cc}
\mu\trace S & \langle\xih,\nabla\mu\rangle \\
\langle\xih,\nabla\lambda\rangle & \mu\trace S  +(\lambda+\mu)\langle\xih, S\xih\rangle
\end{array}\right]
\left[\begin{array}{cc}
\kappa_{11} & -i\kappa_{12}\\
i\kappa_{21} & \kappa_{22}
\end{array}\right].
\end{equation}

It remains to determine $Y_3$.
Fix an orthonormal frame $(\eta_j)$ of $T_X^* M$, $\eta_1=\nu$, $\eta_2=\xih$.
To compute the contraction we use the frame $(\eta_j)_{j\geq 2}$ of $T^* X$,
and the dual frame.
We compute derivatives of
\begin{align*}
iq &= |\xi|\sqrt{1-t}(\Id-\nu\otimes\nu-\xih \otimes\xih) \\
   &\phantom{==} +\kappa_{11} \nu\otimes\nu -i\kappa_{12} \nu\otimes\xih
                 +i\kappa_{21} \xih\otimes\nu +\kappa_{22} \xih\otimes\xih.
\end{align*}
Set $s_{jk}=\langle S\eta_j,\eta_k\rangle$.
A calculation using $\horderiv \nu= S$ and $\horderiv\xih =0$ gives
$\big(\Horderiv{_j} iq\big)_{11} = \Horderiv{_j}(iq)_{11} + s_{j2} |\xi|b^{-1} M$,
$j\geq 2$, where
\[
M=
\left[\begin{array}{cc}
0 & (ut-b)\sqrt{1-t} \\ (ut-b)\sqrt{1-t} & i(ut-t)
\end{array}\right].
\]
Regard the coefficients $\kappa_{jk}$ as functions of $c_s,c_p,|\xi|$.
Then
$\Horderiv{_j}(iq)_{11} = \langle\eta_j, \nabla c_s\rangle K_s
+ \langle\eta_j, \nabla c_p\rangle K_p$,
where $K_s$ and $K_p$ denote the partial derivatives of $(iq)_{11}$
with respect to $c_s$ and $c_p$, respectively.
In particular,
\[
\big(\Horderiv{_2} iq\big)_{11} = \langle\xih, \nabla c_s\rangle K_s
    + \langle\xih, \nabla c_p\rangle K_p + s_{22} |\xi|b^{-1} M.
\]

Define $w_1=[(ut-b)\sqrt{1-t}, -i(b-ut)]$.
The row $k>2$ in $\big(\Horderiv{_j} iq\big)_{21}$ equals $s_{jk} b^{-1}|\xi| w_1$.

The vertical derivative of a function $\kappa$ which, when restricted to
a fiber depends only on $|\xi|$, is given by its radial derivative:
\begin{equation}
\label{vert-radial-deriv}
\Verderiv{_\eta}\kappa =|\xi|^{-1} \langle \xih,\eta\rangle \dot{\kappa}.
\end{equation}
A calculation using $\verderiv \nu= 0$ and $\verderiv\xih =|\xi|^{-1}(\Id-\xih \otimes\xih)$ gives
\[
\big(\Verderiv{_j} iq\big)_{11} = \Verderiv{_j}(iq)_{11}
 =|\xi|^{-1} \delta_{2j} \dot{K},
\quad j\geq 2,
\]
where we have set
\[
\dot{K}=
\left[\begin{array}{cc}
\dot{\kappa_{11}} & -i\dot{\kappa_{12}}\\
i\dot{\kappa_{21}} & \dot{\kappa_{22}}
\end{array}\right].
\]
Define $w_2 = [i(b-t), \sqrt{1-ut}-\sqrt{1-t}]$.
The row $k>2$ in $\big(\Verderiv{_j} iq\big)_{21}$ equals $\delta_{jk}  b^{-1} w_2$.

Denote $A=(a)_{11}=\diag[\lambda+2\mu,\mu]$.
Note that $(a)_{22}$ equals $\mu$ times the unit matrix.
Summing over $j\geq 2$ we derive
\begin{equation}
\label{Y3}
\begin{aligned}
Y_3 &= i{\dot{K}}^* A \big(|\xih|^{-1} \langle\xih, \nabla c_s\rangle K_s
    +  |\xih|^{-1} \langle\xih, \nabla c_p\rangle K_p + s_{22} b^{-1} M\big) \\
 &\phantom{==} + i \mu b^{-2}|\xi| (\trace(S)-s_{22}) w_2^*\otimes w_1,
\end{aligned}
\end{equation}
evaluated at $\Sigma$.
\end{proof}

Denote $v$ the unique unit section of $\Kernel(z-\lambda_0)$
satisfying $(\xih|v)>0$,
\[
v=\gamma^{-1}\big(i\zeta_2\nu+(\zeta_1-\lambda_0)\xih\big),
\]
$\gamma>0$ such that $|v|=1$.
We compute the $v$-dependent terms in the right-hand side of \eqref{subprinc-P}.
\begin{lemma}
\label{lemma-IMs}
On $\Sigma$,
$\IM \trace\horderiv p.\verderiv v^*.v=0$, and
\begin{equation}
\label{im-part-in-psub}
\begin{aligned}
16 \gamma^2 & \IM\trace(v^*\verderiv z.\horderiv v)  \\
  &= m^3 \mu^{-1} c_r^2 \sigma_s^6(4-\sigma_s^2)(2-\sigma_s^2)\big(2\tau_s\dot{\zeta_3}
             - (2-\sigma_s^2)\dot{\zeta_2}\big) s_{22} \\
  &\phantom{=} + 2 m^3 c_r \sigma_s^6(2-\sigma_s^2)(5\sigma_s^2 -4-\sigma_s^4)\trace'(S),
\end{aligned}
\end{equation}
where $\trace'(S)=\trace(S)-s_{22}$, $s_{22}=\langle\xih,S\xih\rangle$,
and $m=\mu|\xi|/b$.
\end{lemma}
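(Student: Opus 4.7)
The approach is direct computation in the block basis of Example~\ref{exa-iso-1}, exploiting that in the isotropic case the unit section $v$ lies in the rank-two subbundle $V = \operatorname{span}\{\nu, \xih\}$ and has scalar coefficients depending only on $x$ and $|\xi|$. Write $v = \gamma^{-1}(i\zeta_2\nu + W\xih)$ with $W = \zeta_1 - \lambda_0$ and $\gamma^2 = \zeta_2^2 + W^2$.

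For part (a), I would compute $\verderiv v^*.v$ directly. Since $\nu$ is $\xi$-independent, for a scalar $f(|\xi|)$ one has $\partial_{\xi_j}f = \xih_j f'(|\xi|)$, while $\partial_{\xi_j}\xih^a = |\xi|^{-1}(\delta^a_j - \xih_j\xih^a)$. A short computation then gives
\[
\verderiv v^*.v = \tfrac{1}{2}\,\xih\, \frac{d}{d|\xi|}\!\left[\frac{\zeta_2^2 + W^2}{\gamma^2}\right] = 0,
\]
the $\verderiv\xih$-contribution vanishing because $v$ has no $\xih$-transverse components and $\sum_{a}(\delta_{ja} - \xih_j\xih_a)\xih^a = 0$. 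Hence $\verderiv v^*.v \equiv 0$ identically in the isotropic case, and its pairing with the real covector $\horderiv p$ has vanishing imaginary part.

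For part (b), I would start by using $\lambda_0 = 0$ on $\Sigma$ together with $zv = 0$ and $v^*z = 0$. Differentiating $v^*z = \lambda_0 v^*$ and $zv = \lambda_0 v$ on $\Sigma$ yields $v^*\verderiv z = \verderiv\lambda_0\cdot v^* - \verderiv v^*.z$ and $z\horderiv v = -\horderiv z.v$. Combining these with $\verderiv v^*.v = 0$ from part (a) collapses the trace to
\[
\trace(v^*\verderiv z.\horderiv v) = \trace\verderiv\lambda_0.v^*\horderiv v + \trace\verderiv v^*.\horderiv z.v.
\]
Horizontal derivatives of the scalars $\gamma, \zeta_j, \lambda_0$ are real (they are built from $\nabla c_s, \nabla c_p$); since $v^*(i\zeta_2\nu + W\xih) = \gamma$, all scalar pieces in $v^*\horderiv v$ assemble into a real covector. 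The imaginary part of $v^*\horderiv v$ therefore comes entirely from the imaginary coefficient $i\zeta_2$ paired with the real frame derivatives $\horderiv\nu$ (which equals the shape operator $S$ through $\nabla_v\nu = g(Sv,\cdot)$) and $\horderiv\xih$ (through Christoffel terms), and a parallel analysis applies to $\horderiv z.v$.

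Choosing an orthonormal coframe on $X$ containing $\xih$ at the base point then separates the shape data into $s_{22} = \langle\xih, S\xih\rangle$ (the $\xih$-diagonal) and $\trace'(S) = \trace S - s_{22}$ (the transverse sum). Expanding with the explicit blocks \eqref{z-iso-block} and \eqref{iq-iso-block}, substituting the radial derivatives $\verderiv\zeta_j = (\dot\zeta_j/|\xi|)\xih$, and restricting to $\Sigma$ via $\zeta_1\zeta_3 = \zeta_2^2$ and the Rayleigh identity $2(2b-t) = t(2-t)$ from \eqref{b-on-Sigma}, one collects the imaginary contributions into two classes, one proportional to $s_{22}$ and one to $\trace'(S)$. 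Introducing $t = \sigma_s^2$, $\tau_s = \sqrt{1-t}$, $\tau_p = \sqrt{1-\sigma_p^2}$, and $m = \mu|\xi|/b$, polynomial simplification yields the stated coefficients $(4-\sigma_s^2)(2-\sigma_s^2)$ and $(2-\sigma_s^2)(5\sigma_s^2 - 4 - \sigma_s^4)$. The main obstacle is the combinatorial bookkeeping: the many intermediate $\dot\zeta_1$ and $\dot\gamma$ contributions must cancel so that the final formula depends only on $\dot\zeta_2$ and $\dot\zeta_3$, and these cancellations rely on the Rayleigh cubic together with the explicit form of $\gamma^2$ on $\Sigma$.
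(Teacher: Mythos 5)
Your part (a) is fine and is essentially the paper's argument: writing $v=\gamma^{-1}\big(i\zeta_2\nu+(\zeta_1-\lambda_0)\xih\big)$ with real scalars and $|v|=1$, one sees $\verderiv v^*.v$ is real (indeed zero), so its pairing with the real covector $\horderiv p$ has vanishing imaginary part.

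Part (b), however, has a genuine gap: the content of the lemma is the explicit formula \eqref{im-part-in-psub}, and your argument stops exactly where that formula would have to be produced, asserting that expanding the blocks and simplifying ``yields the stated coefficients''. Moreover, several ingredients you propose to feed into that expansion are wrong or missing. First, the identity $z\horderiv v=-\horderiv z.v$ is false on $\Sigma$: differentiating $zv=\lambda_0 v$ on the neighbourhood $K$ gives $z\horderiv v=\horderiv\lambda_0.v-\horderiv z.v$ there, and $\horderiv\lambda_0$ does not vanish on $\Sigma$ (it carries $\nabla c_s,\nabla c_p$); your collapsed trace identity survives only because the omitted term is contracted against $\verderiv v^*.v=0$, a point you neither state nor use. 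Second, in the geometric calculus used here one has $\horderiv\xih=0$ exactly (the tautological covector and $|\xi|$ are horizontally constant), so there are no Christoffel contributions; inserting them would corrupt the bookkeeping. Third, the $\trace'(S)$ term requires the transverse directions $\eta_j$, $j>2$, where the component $|\xi|^{-1}\gamma_2\eta_j^*$ of $\Verderiv{_j}v^*$ meets the $V$--$V^\perp$ part of the derivatives of $z$; there the shear value $\zeta^\perp=\mu|\xi|\tau_s$ of $z$ on $V^\perp$ enters (in the paper through $\zeta_3-\zeta^\perp$), and your reduction to the $2\times2$ block never mentions it. Finally, your ordering makes the computation harder than it is: the first collapsed term produces $\dot{\lambda_0}$, which via $(\zeta_1-\lambda_0)(\zeta_3-\lambda_0)=\zeta_2^2$ reintroduces $\dot{\zeta_1}$, so the cancellations you worry about are created by your own decomposition. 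The paper instead takes the imaginary part before expanding: since $\Horderiv{_j}v=i\Horderiv{_j}\gamma_1.\nu+\Horderiv{_j}\gamma_2.\xih+i\gamma_1 S\eta_j$, one gets $\IM\, v^*\Verderiv{_j}z.\Horderiv{_j}v=\gamma_1\RE\, v^*\Verderiv{_j}z.S\eta_j$, which kills all $\dot{\gamma}$, $\dot{\zeta_1}$, $\dot{\lambda_0}$ terms at once and leaves only $\dot{\zeta_2},\dot{\zeta_3}$ (for $j=2$) and the $\zeta_3-\zeta^\perp$ block (for $j>2$), after which \eqref{b-on-Sigma} gives the stated coefficients. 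As it stands, your proposal identifies the right mechanism but does not prove the formula.
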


\begin{proof}
Set $\gamma_1=\zeta_2/\gamma$ and $\gamma_2=(\zeta_1-\lambda_0)/\gamma$.
We continue to use the frame $(\eta_j)$.
For $j\geq 2$ we have
\begin{align*}
\Verderiv{_j}v^* &=-i\Verderiv{_j}\gamma_1.\nu^*+\Verderiv{_j}\gamma_2.\xih^*
         +|\xi|^{-1}(1-\delta_{2j})\gamma_2 \eta_j^*,\\
\Horderiv{_j}v &= i\Horderiv{_j}\gamma_1.\nu + \Horderiv{_j}\gamma_2.\xih  +i\gamma_1 S\eta_j.
\end{align*}
Note that $\Verderiv{_j}v^*.v$ is real.
Hence $\IM \trace\horderiv p.\verderiv v^*.v=0$.
We need the vertical derivative of $z$.
To compute it we proceed in the same way
as we did when computing the derivatives of $iq$.
Recall that $z$ equals $\zeta^\perp\Id$ on $V^\perp$,
where $\zeta^\perp=\mu|\xi|\sqrt{1-t}$.
We obtain
$\big(\Verderiv{_j} z\big)_{11} = \Verderiv{_j} (z)_{11}$.
Moreover, the column $k>2$ in $(\Verderiv{_j} z)_{12}$
equals $\delta_{jk}|\xi|^{-1}$
times the transpose of the row vector $[-i\zeta_2, \zeta_3-\zeta^\perp]$.
We get
\begin{align*}
\IM v^* \Verderiv{_j} z.\Horderiv{_j} v 
  &= \gamma_1 \RE v^* \Verderiv{_j} z. S\eta_j \\
  &= \gamma_1 (\gamma_2\Verderiv{_j} \zeta_3 -\gamma_1\Verderiv{_j}\zeta_2) s_{2j} \\
  &\phantom{=} + \gamma_1 |\xi|^{-1} \big(\gamma_2(\zeta_3-\zeta^\perp)-\gamma_1\zeta_2\big) s_{jj} (1-\delta_{2j}).
\end{align*}
Summing over $j\geq 2$ we obtain
\begin{align*}
\gamma^2 \IM\trace(v^*\verderiv z.\horderiv v) 
  &= \zeta_1\zeta_2\Verderiv{_{S\xih}} \zeta_3 -\zeta_2^2\Verderiv{_{S\xih}}\zeta_2 \\
  &\phantom{==} + c_r \zeta_2 \big(\zeta_1(\zeta_3-\zeta^\perp)-\zeta_2^2\big) \trace'(S).
\end{align*}
The first term on the right equals
\[
m^2 |\xi|^{-1} s_{22} (2b-t)\big(t\sqrt{1-t}\dot{\zeta_3}-(2b-t)\dot{\zeta_2}\big).
\]
Moreover, using the definition of $b$, we calculate
\[
\zeta_3-\zeta^\perp = m(\sqrt{1-ut}-\sqrt{1-t}).
\]
Using \eqref{b-on-Sigma}, $4b=t(4-t)$, we derive \eqref{im-part-in-psub}.
\end{proof}

The restriction to $\Sigma$ of the radial derivative of the eigenvalue
$\lambda_0=(zv|v)=a_0^{-2}(p-1)$ equals $\dot{\lambda_0}=(\dot{z}v|v)=a_0^{-2}$
because $\dot{p}=1$ on $\Sigma$.

\begin{lemma}
\label{lemma-lambda0-dot}
On $\Sigma$,
\[
\gamma^2 \dot{\lambda_0}
  = m^3 \sigma_s^6(4-\sigma_s^2) \tau_s \big(\tau_p/\tau_s+c_s\tau_s/c_p\tau_p +\sigma_s^2-2\big),
\]
where $m=\mu|\xi|/b$.
\end{lemma}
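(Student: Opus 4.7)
The plan is to apply first-order perturbation theory to the simple eigenvalue $\lambda_0$ of the $2\times 2$ Hermitian block $(z)_{11}$. Since $\lambda_0$ vanishes on $\Sigma$ and $v$ is the corresponding unit eigenvector, differentiating $zv=\lambda_0 v$ radially and pairing with $v$ gives the Feynman--Hellmann identity $\dot\lambda_0 = (\dot z v\mid v)$. Because $\Kernel z$ lies entirely in the subbundle $V$ spanned by $\nu$ and $\xih$, only the block $(z)_{11}$ contributes. In the basis $(\nu,\xih)$ on $\Sigma$ the unit eigenvector takes the explicit form $v = \gamma^{-1}(i\zeta_2\nu + \zeta_1\xih)$, with $\gamma^2 = \zeta_1^2 + \zeta_2^2$, and a direct $2\times 2$ matrix computation yields
\[
\gamma^2\dot\lambda_0 = \zeta_2^2\dot\zeta_1 - 2\zeta_1\zeta_2\dot\zeta_2 + \zeta_1^2\dot\zeta_3.
\]

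The next step is to compute the radial derivatives $\dot\zeta_j = |\xi|\partial_{|\xi|}\zeta_j$ from the explicit formulas of Example~\ref{exa-iso-1}. The elementary identities $\dot t = -2t$, $\dot{(ut)} = -2ut$, $\dot\tau_s = t/\tau_s$, $\dot\tau_p = ut/\tau_p$, and $\dot b = -t\tau_p/\tau_s - ut\tau_s/\tau_p$ cover all the building blocks. Writing $\alpha := \dot m/m = 1 - \dot b/b$, each $\dot\zeta_j$ splits into a piece proportional to $\alpha\zeta_j$ plus an explicit remainder. A helpful observation is that the $\alpha$-contributions combine into $\alpha\zeta_1(\zeta_1\zeta_3 - \zeta_2^2) = \alpha\zeta_1\det(z)_{11}$, which vanishes on $\Sigma$. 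All dependence on the material variable cancels, leaving a closed rational expression in $\sigma_s$, $\sigma_p$, $\tau_s$, $\tau_p$, and $m$ alone.

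The remaining work is purely algebraic. I would substitute $t = \sigma_s^2$, $ut = \sigma_p^2$, $b = \sigma_s^2(4-\sigma_s^2)/4$, and repeatedly apply the Rayleigh identity $\tau_s\tau_p = (2-\sigma_s^2)^2/4$ in order to pull $m^3\sigma_s^6(4-\sigma_s^2)\tau_s$ out as a common factor. The factor $4-\sigma_s^2$ originates from the denominator $b$ in $m = \mu|\xi|/b$, while the combination $\tau_p/\tau_s + c_s\tau_s/c_p\tau_p + \sigma_s^2-2$ is produced when the $\dot b$-part of $\dot\zeta_2$ is regrouped against the $t/\tau_s^2$ and $ut/\tau_p^2$ contributions coming from $\dot\zeta_1$ and $\dot\zeta_3$. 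The main obstacle is purely bookkeeping: there is no conceptual difficulty once the $\alpha$-cancellation is in place, but one must keep careful track of signs and exponents in order to arrive at the compact product form claimed on the right-hand side.
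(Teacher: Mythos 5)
Your starting point is sound, and it is in fact recorded in the paper immediately before the lemma: $\dot\lambda_0=(\dot z\,v|v)$ for the simple eigenvalue, and your $2\times2$ evaluation $\gamma^2\dot\lambda_0=\zeta_2^2\dot\zeta_1-2\zeta_1\zeta_2\dot\zeta_2+\zeta_1^2\dot\zeta_3$ on $\Sigma$ is correct, as is the cancellation of the $\dot m/m$-contributions (they are proportional to $\zeta_1(\zeta_1\zeta_3-\zeta_2^2)=\zeta_1\det(z)_{11}=0$). Note that, using $\zeta_1\zeta_3=\zeta_2^2$ on $\Sigma$, your expression is just $\zeta_1$ times the radial derivative of $\det(z)_{11}$, and that is where the paper goes directly: it observes $\gamma^2\lambda_0\equiv\zeta_1\det(z)_{11}$ to second order at $\Sigma$, inserts \eqref{det-z11}, and obtains $\gamma^2\dot\lambda_0$ as the limit of $\gamma^2\lambda_0/(p-1)$ by a single application of de l'Hospital to the scalar Rayleigh function in the one variable $t$, so it never differentiates the entries of \eqref{z-iso-block} and needs none of $\dot b$, $\dot\tau_s$, $\dot\tau_p$. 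Your route is a legitimate alternative, but it is strictly more bookkeeping for the same output.

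The genuine gap is that the decisive bookkeeping is only asserted, and it does not terminate where you claim. Carrying out your own (correct) identities $\dot t=-2t$, $\dot b=-t\tau_p/\tau_s-ut\tau_s/\tau_p$, together with $4b=\sigma_s^2(4-\sigma_s^2)$ and $4\tau_s\tau_p=(2-\sigma_s^2)^2$ on $\Sigma$, produces the middle term $u\,\tau_s/\tau_p=c_s^2\tau_s/(c_p^2\tau_p)$, not $c_s\tau_s/(c_p\tau_p)$; the same replacement comes out of the paper's route, since $\frac{d}{dt}\,4\sqrt{(1-t)(1-ut)}=-2\big(\tau_p/\tau_s+u\,\tau_s/\tau_p\big)$ at $t=\sigma_s^2$ with $u=c_s^2/c_p^2$. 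A numerical check for the Poisson solid $\lambda=\mu$ (so $u=1/3$, $\sigma_s^2=2-2/\sqrt3$) gives $\gamma^2\dot\lambda_0\approx 0.865\,m^3$, matching the $c_s^2/c_p^2$ version, whereas the printed right-hand side evaluates to $\approx 0.950\,m^3$. So either your regrouping hides a slip, or, as the computation indicates, the displayed limit in the paper's proof and hence the stated formula carry a typo ($c_s/c_p$ in place of its square, which then propagates into $N$ in Proposition~\ref{prop-subprinc-isoelast}). In any case, the assertion that the algebra closes on the stated product form is precisely the step that is not established; when the algebra is actually done along your lines one lands on $\tau_p/\tau_s+c_s^2\tau_s/c_p^2\tau_p+\sigma_s^2-2$, and you would need to exhibit this computation and address the discrepancy rather than defer it to bookkeeping.
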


\begin{proof}
The section $w=i\zeta_2\nu+\zeta_1\xih$ equals $\gamma v$ on $\Sigma$.
Therefore, to second order on $\Sigma$,
$\gamma^2 \lambda_0\equiv (zw|w) = \zeta_1 \det (z)_{11}$.
Inserting \eqref{det-z11},
\[
\gamma^2 \lambda_0\equiv
mb t\sqrt{1-t} \big(4 \sqrt{(1-t)(1-ut)} -(2-t)^2\big).
\]
Recall $p=c_r|\xi| = \sigma_s t^{-1/2}$, $\Sigma=\{t=\sigma_s^2\}$.
The rule of de l'Hospital gives
\begin{equation*}
\lim_{t\to\sigma_s^2}
\frac{4 \sqrt{(1-t)(1-ut)} -(2-t)^2}{\sigma_s t^{-1/2} -1}
   = 4\sigma_s^2\big(\tau_p/\tau_s+c_s\tau_s/c_p\tau_p +\sigma_s^2-2\big).
\end{equation*}
Summarizing, the formula for
$\gamma^2 \dot{\lambda_0} =\gamma^2 \lambda_0/(p-1)$ follows.
\end{proof}

Inserting the formulas of the lemmas of this section
into the general formula~\eqref{subprinc-P} for the subprincipal symbol of $P$
we obtain a formula for the subprincipal symbol in the isotropic case.

\begin{prop}
\label{prop-subprinc-isoelast}
Denote $X=(x_{jk})$ the $2\times 2$ matrix solving \eqref{sylv-X}.
Set
$N= \tau_s \big(\tau_p/\tau_s+c_s\tau_s/c_p\tau_p +\sigma_s^2-2\big)$.
Let $P$ be the operator of Lemma~\ref{lemma-P} determined by the unit section
$v$ of $\Kernel z$ having positive $\xih$ component.
The subprincipal symbol of $P$ is given as follows.
\begin{align*}
16 N \subprinc{p} &= (c_r/2\mu) \big(x_{11} (2-\sigma_s^2)^2 + 4 x_{22} (1-\sigma_s^2)^2
        + 4\IM x_{12}(2-\sigma_s^2)\tau_s\big) \\
  &\phantom{=} + \mu^{-1} c_r^2 (2-\sigma_s^2)\big(2\tau_s\dot{\zeta_3}
             - (2-\sigma_s^2)\dot{\zeta_2}\big) \langle S\xih,\xih\rangle \\
  &\phantom{=} + 2 c_r (4-\sigma_s^2)^{-1} (2-\sigma_s^2)(5\sigma_s^2 -4-\sigma_s^4)
                     (\trace(S)-\langle S\xih,\xih\rangle)
\end{align*}
\end{prop}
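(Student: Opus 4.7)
The proof is essentially a matter of assembling the three foregoing lemmas into the general formula \eqref{subprinc-P} from Lemma~\ref{lemma-P} and then simplifying with the on-$\Sigma$ identities that are special to the isotropic case. Since no new conceptual input is required, the plan is organizational.

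First I would record the three ingredients that enter the general formula. By Lemma~\ref{lemma-IMs}, the ``Poisson bracket'' term $\IM\trace\horderiv p.\verderiv v^*.v$ vanishes identically on $\Sigma$, so only the first summand of \eqref{subprinc-P} contributes; moreover, the same lemma provides a closed expression for $\IM\trace(v^*\verderiv z.\horderiv v)$ in terms of the shape operator components $s_{22}=\langle S\xih,\xih\rangle$ and $\trace'(S)=\trace S - s_{22}$, together with the radial derivatives $\dot\zeta_2,\dot\zeta_3$. Lemma~\ref{lemma-X-iso} expresses $2\RE(z_-v|v)$ in terms of the entries of the matrix $X$ and the components $v_1,v_2$ of the unit kernel section. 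Finally, Lemma~\ref{lemma-lambda0-dot} gives the denominator $(\dot z v|v)=\dot\lambda_0$.

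Next I would make the kernel section explicit on $\Sigma$. Since $\lambda_0=0$ there, the unit section $v$ of Lemma~\ref{lemma-P} is $v=\gamma^{-1}(i\zeta_2\nu+\zeta_1\xih)$ with $\gamma^2=\zeta_1^2+\zeta_2^2$. Using the formulas $\zeta_1=m\sigma_s^2\tau_s$ and $\zeta_2=m\sigma_s^2(2-\sigma_s^2)/2$ (the latter by \eqref{b-on-Sigma}), I can compute
\begin{gather*}
|v_1|^2=\zeta_2^2/\gamma^2,\quad |v_2|^2=\zeta_1^2/\gamma^2,\quad
\RE x_{12}\bar v_1 v_2=(\zeta_1\zeta_2/\gamma^2)\,\IM x_{12},
\end{gather*}
so that Lemma~\ref{lemma-X-iso} turns $2\gamma^2\RE(z_-v|v)$ into a polynomial in $\sigma_s$, $\tau_s$, $m$ multiplying the three quantities $x_{11},x_{22},\IM x_{12}$.

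Now I would form the quotient $(\dot\lambda_0)^{-1}[\RE(z_-v|v)+\IM\trace(v^*\verderiv z.\horderiv v)]$, multiplying numerator and denominator by $\gamma^2$ so that the factor $\gamma^{-2}$ in the numerator cancels with the $\gamma^{2}$ on the left-hand side of Lemma~\ref{lemma-lambda0-dot}. The resulting expression contains only rational functions of $\sigma_s,\tau_s,m,\mu,c_r$ together with a universal factor $(4-\sigma_s^2)\tau_s N$ in the denominator. Multiplying through by $16N$ removes $N$ and produces the $16N\subprinc{p}$ form displayed in the proposition.

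The last step is to translate the prefactor $m$ into the physical parameters. Since $b=\sigma_s^2(4-\sigma_s^2)/4$ on $\Sigma$ and $|\xi|=1/c_r$ on $\Sigma$, the identity $m=\mu|\xi|/b$ becomes $m\,\sigma_s^2(4-\sigma_s^2)=4\mu/c_r$, which is precisely what is needed to rewrite the $x$-matrix contribution with the coefficient $c_r/(2\mu)$ and the $\IM\trace$-contribution with the coefficients $\mu^{-1}c_r^2$ and $c_r(4-\sigma_s^2)^{-1}$ in the two remaining terms of the proposition. The main obstacle is nothing more than careful bookkeeping of these combinatorial factors; once $\gamma^{2}$ cancels and $m$ is eliminated via $m=4\mu/(c_r\sigma_s^2(4-\sigma_s^2))$, the assertion reduces to a direct term-by-term comparison with the statement.
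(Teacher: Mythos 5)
Your proposal is correct and follows essentially the same route as the paper's own proof: the paper likewise evaluates the kernel section $\gamma v=(mt/2)\big(i(2-t)\nu+2\sqrt{1-t}\,\xih\big)$ on $\Sigma$, forms $16N\RE(z_-\,\cdot\,|\,\cdot\,)/\gamma^2\dot{\lambda_0}$ from Lemmas~\ref{lemma-X-iso} and \ref{lemma-lambda0-dot}, and obtains the remaining terms by combining Lemmas~\ref{lemma-IMs} and \ref{lemma-lambda0-dot}. Your extra bookkeeping (cancellation of $\gamma^2$, elimination of $m$ via $m\sigma_s^2(4-\sigma_s^2)=4\mu/c_r$ using \eqref{b-on-Sigma} and $|\xi|=1/c_r$ on $\Sigma$) is exactly the computation the paper leaves implicit.
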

\begin{proof}
On $\Sigma$,  $w=\gamma v=(mt/2)\big(i(2-t)\nu+2\sqrt{1-t}\xih\big)$.
Using lemmas~\ref{lemma-X-iso} and \ref{lemma-lambda0-dot} we calculate
$16 N \RE(z_-w|w)/\gamma^2\dot{\lambda_0}$.
The result is the first term on the right-hand side of the claimed formula.
Similarly, we obtain the other terms combining the
lemmas~\ref{lemma-IMs} and \ref{lemma-lambda0-dot}.
\end{proof}

The constituents of the above formula for $\subprinc{p}$ are curvature
and velocities (Lam\'e parameters), assumed known.
It seems difficult to analyze the formula further unless it is specialized to particular cases.
However, it should be noted that the formula allows explicit
numerical evaluation of $\subprinc{p}$.
Therefore it can be used when solving transport equations for Rayleigh
wave amplitudes numerically with a (seismic) ray tracing program, say.
Formulas for the amplitudes of Rayleigh waves were given by 
Babich and Kirpichnikova in \cite{babich/kirpichnikova:04:rayleigh-waveprop}.

\appendix
\section{Geometric \Psdiff~Calculus}
\label{app-geopsdo-calc}

\Psdiff\ operators on manifolds are usually introduced by reducing to
the euclidean case via partitions of unity,
\cite[18.1]{hormander:85:the-analysis-3},
\cite{EvansZworskiXXSemiclass}.
The principal symbol of a \psdiff\ operator is invariantly defined.
If the operator acts on sections of the line bundle of half-densities
then there also is an invariantly defined subprincipal symbol,
\cite[Theorem~18.1.33]{hormander:85:the-analysis-3},
\cite[Appendix]{SjoesZworski02Monodromy}.

In the body of the paper we explicitly track, down to the subprincipal level,
symbols of operators acting between vector bundles.
To achieve this we use Sharafutdinov's geometric \psdiff\ calculus
\cite{Sharaf04geo1,Sharaf05geo2}.
The purpose of this appendix is to recall this calculus,
presenting a semiclassical variant.
Since we have to refer, in the main part of the present paper,
to proofs of the calculus, we give a rather detailed presentation.
The calculus depends on a symmetric connection of the manifold
and on metric connections of the (Hermitian) bundles.
We make the stronger assumption that the manifold is Riemannian
and that the symmetric connection is the Levi-Civita connection.
The important features of the calculus are a symbol isomorphism
modulo order minus infinity, and complete symbol expansions for products and
adjoints given solely in terms of geometric data.
Using connections to develop a \psdiff\ calculus and to prove the existence
of a complete symbol isomorphism was done earlier by Widom, \cite{Widom80completesymb}.
This was further developed by Pflaum who gave a convenient quantization
map from symbols to operators, \cite{Pflaum98normalsymb}.
Sharafutdinov gave symbol expansions in terms of geometric data.

Let $X$ a compact Riemannian manifold without boundary, $\dim X=n$.
The exponential map, $\exp$, of the Levi-Civita connection
defines a diffeomorphism, $(x,v)\mapsto (x,y)=(x,\exp_x v)$,
between a neighbourhood of the zero-section of the tangent bundle $T=TX$
and a neighbourhood of the diagonal in $X^2$.
In the proofs of the propositions below we need the following properties of $\exp$.
In local coordinates the exponential map satisfies
\begin{equation}
\label{exp-coords}
(\exp_x v)^i = x^i + v^i -\Gamma^i_{jk}(x) v^jv^k/2 + \bigoh(|v|^3),
\end{equation}
where $\Gamma^i_{jk}$ denote the Christoffel symbols.
Normal coordinates centered at $x$ satisfy $(\exp_x v)^i =  v^i$.
There exist $0<r<R<\inj(x)$, the injectivity radius of $X$,
such that the equation
\begin{equation}
\label{def-diffeom-f}
\exp_{\exp_x v} z =\exp_x w
\end{equation}
defines, for every $v\in T_x=T_x X$, $|v|<R$, a diffeomorphism $w\mapsto z=z(x,v,w)$
from an open neighbourhood of the origin, contained in $\{|w|<R\}\subset T_x$,
onto the ball $\{|z|<r\}\subset T_y$ , $y=\exp_x v$.
This map is used below to change variables of integration.
Obviously, $z(x,0,w)=w$.
A computation in normal coordinates centered at $x$ shows that
\begin{equation}
\label{deriv-of-z-of-w}
(z_w')^{-1} z = w-v + \bigoh((|v|+|w|)^3) \quad\text{as $v,w\to 0$.}
\end{equation}
Recall, from section~\ref{sect-conn-geosymb}, the notation for
segments and for parallel transport maps.
In local coordinates,
\begin{equation}
\label{transport-coords}
\big(\transport^{TX}_{\tofrom{\exp_x v}{x}} w\big)^i
= w^i -\Gamma^i_{jk}(x) w^jv^k + \bigoh(|v|^2).
\end{equation}

Let $E\ueber X$ and $F\ueber X$ be Hermitian vector bundles with metric connections.
Recall from section~\ref{sect-conn-geosymb} the definition \eqref{def-horderiv}
of horizontal derivatives and the definition of vertical derivatives.
A $\Cinfty$ section $a$ of the bundle $\pi^*\Hom(E,F)\ueber T^* X$
is called a $\Hom(E,F)$-valued symbol of order $m\in\R$,
$a\in\Sym{m}=\Sym{m}(T^* X;\pi^*\Hom(E,F))$,
iff for all nonnegative integers $j$ and $\ell$,
\[
\sup_{x,\xi} (1+|\xi|)^{j-m} |(\verderiv)^j(\horderiv)^\ell a(x,\xi)\big| <\infty.
\]
These are the usual type $1,0$ symbol estimates.
The symbol space $\Sym{m}$ is a Fr\'echet space.
The space $\Sykm{k}{m}=\Sykm{k}{m}(T^* X;\pi^*\Hom(E,F))$
of $\h$-dependent symbols of order $m$ and degree $k$ is the Fr\'echet space of
families $a_\h\in \Sym{m}$ such that $\{\h^k a_\h\setof 0<\h\leq 1\}$ is bounded in $\Sym{m}$.
We call $a\in\Sykm{k}{m}$ classical if there exists an asymptotic expansion 
$a\sim\sum_j h^{j-k}a_j$ with $\h$-independent symbols $a_j\in\Sym{m-j}$.

In the following lemma we define, in a semiclassical setting,
the quantization of symbols according to Sharafutdinov's geometric \psdiff\ calculus.
We relate this definition of $\h$-\psdiff\ operators
to the definition in the euclidean situation.
For semiclassical analysis, in particular,
for the class $\Pskm{k}{m}=\Op_\h \Sykm{k}{m}$ of $\h$-\psdiff\ operators,
including mapping properties, and for frequency sets ($\h$-wavefront sets),
refer to \cite{Gerard88asymptPoles}, \cite{Ivrii98MAPSA},
\cite{dimassi/sjostrand:99:spectral-asympt},
\cite{SjoesZworski02Monodromy}, \cite{EvansZworskiXXSemiclass}.
The class of negligible operators, $\Pskm{-\infty}{-\infty}$,
consists of $\h$-dependent operators whose
Schwartz kernels are $\Cinfty$ with $\bigoh(\h^{\infty})$ seminorms.

Fix $\chi_0\in\Cinfty(TX)$, real-valued, $|v|<r$ on the support of $\chi_0(x,v)$,
such that $\chi_0=1$ in a neighbourhood of the zero-section in $TX$.
\begin{lemma}
\label{def-geom-Oph}
Let $a_\h\in\Sykm{k}{m}$ be a $\Hom(E,F)$-valued symbol.
Then
\begin{equation}
\label{def-Oph-a}
\begin{aligned}
A_\h u_\h(x)=
   (2\pi\h)^{-n} \int \limits _{T^*_x} \int \limits _{T_x}
    & e^{-i\langle\eta,v\rangle /\h} \chi_0(x,v) \\
    & \cdot a_\h(x,\eta) 
    \transport^{E}_{\tofrom{x}{\exp_x v}} u_\h(\exp_x v)\intd v \intd\eta,
\end{aligned}
\end{equation}
defines an $\h$-\psdiff\ operator $A_\h\in\Pskm{k}{m}(X;E,F)$.
Given a point $x$ there exists a geodesic ball $U$ centered at $x$, and
a symbol $a^U_\h\in\Sykm{k}{m}$ such that, for $u_\h$ compactly supported in $U$,
\[
A_\h u_\h(y) = (2\pi\h)^{-n} \int \limits _{T^*_x} \int \limits _{T_x}
             e^{i\langle\theta,v-w\rangle /\h} a^U_\h(y,\theta) 
             \transport^{E}_{\tofrom{y}{y'}} u_\h(y')\intd w \intd\theta,
\]
where $y=\exp_x v$ and $y'=\exp_x w$.
Moreover, at $x$, $a^U_\h\equiv a_\h$ modulo $\Sykm{k-2}{m-2}$.
Every $\h$-\psdiff\ operator is, modulo negligible operators,
of the form \eqref{def-Oph-a}.
\end{lemma}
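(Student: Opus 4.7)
The plan is to reduce \eqref{def-Oph-a} to a standard oscillatory-integral representation on a geodesic ball and then invoke the usual symbol calculus on $\R^n$.

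First I would check that the integrand in \eqref{def-Oph-a} makes sense: for each $x$, $\chi_0(x,v)a_h(x,\eta)\transport^E_{\tofrom{x}{\exp_x v}}u_h(\exp_x v)$ is compactly supported in $v$ and tempered in $\eta$, so the inner integrals define the standard semiclassical Fourier transform of a compactly supported section. Then I would localize: pick $x_0\in X$, a geodesic ball $U$ around $x_0$ of radius less than $r$, and trivializations of $E,F$ by parallel transport along radial geodesics from $x_0$. For $u_h$ supported in $U$, write $x=\exp_{x_0}v$ and $y'=\exp_{x_0}w$ with $v,w\in T_{x_0}$. In \eqref{def-Oph-a} at such an $x$, change the inner integration variable $v'\in T_x$ to $w\in T_{x_0}$ via the map $w\mapsto v'=z(x_0,v,w)$ from \eqref{def-diffeom-f}, and change $\eta\in T_x^*$ to $\theta\in T_{x_0}^*$ using the factorization $z(x_0,v,w)=B(x_0,v,w)(w-v)$, where $B$ is the smooth, invertible (near $v=w$) linear map from $T_{x_0}$ to $T_x$ guaranteed by \eqref{deriv-of-z-of-w}; set $\theta=-B^*\eta$.

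After these two substitutions the phase $-\langle\eta,v'\rangle/\h$ becomes $\langle\theta,v-w\rangle/\h$, and the integral takes the form of a standard Kohn--Nirenberg oscillatory integral over $T_{x_0}\times T_{x_0}^*$ with a double amplitude $b_h(x_0,v,w,\theta)$ equal to the product of $a_h$ (with arguments transported back to $x_0$), the Jacobian $|\det\partial_w z|$, the factor $|\det B|^{-1}$, the cutoff $\chi_0$, and the parallel-transport matrices. Standard reduction of double symbols to single symbols, by integration by parts in $w$ against the phase and a stationary-phase expansion at the critical set $\{w=v,\ \theta=0\}$, produces an asymptotic series $a^U_h\in\Sykm{k}{m}$ depending only on $(x,\theta)$, giving the representation in part~(2); this also establishes part~(1), since any operator of this form is a standard $\h$-\psdiff\ operator of class $\Pskm{k}{m}$. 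To prove the congruence $a^U_h\equiv a_h\bmod\Sykm{k-2}{m-2}$ at $x_0$, I would note that $z(x_0,0,w)=w$, so $B(x_0,0,w)=\Id$ and both Jacobians are $1$ at $v=0$; using \eqref{exp-coords}, \eqref{deriv-of-z-of-w}, and \eqref{transport-coords}, the Taylor expansions of $z$, $B$, and the transport matrices near $(v,w)=(0,0)$ in normal coordinates are even enough under the relevant symmetry that the first-order correction to the stationary-phase expansion cancels, leaving an error of order $\h^2$ and symbol-order $-2$.

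For the converse (every $\h$-\psdiff\ operator is of the form \eqref{def-Oph-a} modulo negligible operators), I would cover $X$ by normal coordinate balls $U_\alpha$, use a partition of unity to reduce to operators with Schwartz kernels supported near the diagonal in $U_\alpha\times U_\alpha$, and in each chart invert the construction of part~(2): given a local Kohn--Nirenberg symbol, solve $a^U_h\equiv(\text{given symbol})$ order by order for a geometric symbol $a_h$ by an asymptotic Neumann iteration, then paste the local geometric symbols together using a partition of unity on $X$; the discrepancies between charts lie in $\Pskm{-\infty}{-\infty}$ because the kernel of $\Op_\h(a_h)$ is, up to $\bigoh_{\Cinfty}(\h^\infty)$, supported in the set where $\chi_0(x,\exp_x^{-1}y)\neq 0$, i.e.\ in a preassigned neighbourhood of the diagonal. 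The main obstacle is the step of tracking $a^U_h\equiv a_h\bmod\Sykm{k-2}{m-2}$: the change of variables $(v',\eta)\mapsto(w,\theta)$ simultaneously modifies the phase, the Jacobians, and the symbol argument, and showing that the $\h$-order correction vanishes requires the careful bookkeeping of the stationary-phase expansion together with the normal-coordinate symmetries supplied by \eqref{exp-coords}--\eqref{transport-coords}.
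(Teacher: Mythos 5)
Your argument is essentially the paper's own proof: localize on a small geodesic ball, change variables via $z(x,v,w)$ from \eqref{def-diffeom-f} so that \eqref{deriv-of-z-of-w} turns the phase into $\langle\theta,v-w\rangle/\h$, reduce the resulting double amplitude to a single symbol $a^U_\h$ by the standard stationary-phase procedure, note that all geometric correction factors (Jacobians, cutoff, transport factors, and the map relating $\theta$ to the original covariable) deviate from the identity only to second order at $(v,w)=(0,0)$ so the first-order term vanishes at the center, giving $a^U_\h\equiv a_\h$ modulo $\Sykm{k-2}{m-2}$, and exploit the near-diagonal support together with a partition of unity for the global statement and the converse. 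The only (cosmetic) deviations: the paper keeps the factor $\transport^E_{\tofrom{y}{y'}}$ outside the amplitude rather than trivializing radially (so no holonomy bookkeeping is needed), it uses the symplectic substitution $\zeta={}^t(z_w')^{-1}\vartheta$ on the covariable so no Jacobian enters at that step, and your substitution should read $\theta=B^*\eta$ (not $-B^*\eta$) to produce the phase $e^{i\langle\theta,v-w\rangle/\h}$ and hence the stated congruence with $a_\h(x,\theta)$ rather than $a_\h(x,-\theta)$.
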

The measures in \eqref{def-Oph-a} are the normalized
Lebesgue measures of the euclidean spaces $T_x$ and $T^*_x$.
\begin{proof}
We shall drop the subscript $\h$ from the notation.
Fix $x\in X$. Let $U$ denote a geodesic ball with center $x$ and radius $\leq R$.
In the following we assume that the support of $u$ is a compact subset of $U$.
In \eqref{def-Oph-a} we replace the variables $x,v,\eta$ by $y,z,\zeta$.
Next we change variables in the integral $A u(y)$ such that the domain of
integration does not depend on $y$.
Set $y=\exp_x v$.
Define $z=z(x,v,w)$ by \eqref{def-diffeom-f}.
Using the symplectic map 
$(w,\vartheta)\mapsto (z,\zeta)$, $\zeta = {}^t(z_w')^{-1}\vartheta$,
we get
\[
A u(y) = \int \limits _{T_x} K(v,w)
             \transport^{E}_{\tofrom{y}{\exp_x w}} u(\exp_x w)\intd w,
\]
where the kernel $K$ is given by
\[
K(v,w)= (2\pi\h)^{-n} \int \limits _{T^*_x} 
             e^{-i\varphi /\h} \chi_0(y,z)  a(y,\zeta) \intd\vartheta,
\]
$\varphi=\langle\zeta,z\rangle=\langle \vartheta, (z_w')^{-1}z\rangle$.
Since $z=0$ if and only if $v=w$, we have
$\varphi(v,w,\vartheta)= \langle \psi(v,w)\vartheta,w-v\rangle$.
Here $\psi=\Id+\bigoh(|v|^2+|w|^2)$ by \eqref{deriv-of-z-of-w}.
Decreasing the radius of $U$ and making the linear
change of variables $\theta=\psi(v,w)\vartheta$, we get
\[
K(v,w)= (2\pi\h)^{-n} \int \limits _{T^*_x} 
             e^{i\langle\theta,v-w\rangle/\h} \chi_0(y,z)  a(y,\zeta) J_1(v,w) \intd\theta,
\]
$J_1(v,w)=1+\bigoh(|v|^2+|w|^2)$.
It follows that $A$ restricted to $U$ is a $\h$-\psdiff\ operator of class $\Pskm{k}{m}$.
As it stands the symbol depends on $v,\theta,w$.
Using the standard symbol reduction procedure we obtain $a^U(\exp_x v,\theta)$.
Moreover, the asymptotic expansion implies that, at $v=0$, $a^U-a\in\Sykm{k-2}{m-2}$.

Note that $A u(y)=0$ if the distance between $y$ and $\supp u$ is $>r$.
Using a partition of unity, we infer that the class of operators
given by \eqref{def-Oph-a} equals the class of $\h$-\psdiff\ operators
with Schwartz kernels supported in small neighbourhoods of the diagonal.
\end{proof}

Standard arguments show that up to a negligible operator
$A_\h=\Op_h(a_\h)$ does not depend on the choice of the cutoff $\chi_0$.
The space
\[
\Pskm{k}{m}(X;E,F) = \Op_\h \Sykm{k}{m} + \Pskm{-\infty}{-\infty}.
\]
is the space $\h$-\psdiff\ order $m$ and degree $k$.
We denote the geometric symbol $\sigma_\h(A_\h)=a_\h$.

\begin{remark}
Let $A_\h=\Op_\h(a_\h)\in \Pskm{0}{0}$.
Then $A_\h$ is $L^2$ bounded, uniformly in $\h$.
Assume, in addition, that $a_\h$ depends differentiably on $\h$ with $\partial_\h a_h\in \Sykm{0}{0}$.
Changing variables in \eqref{def-Oph-a} from $\eta$ to $\xi=\eta/\h$, we obtain
$A_{\h_1}-A_{\h_0}=\int_{\h_0}^{\h_1} \h^{-1} \Op_\h(b_\h)\intd \h$,
where $b_\h\in\Sykm{0}{0}$,
$b_\h(x,\eta)=\h\partial_{\h}a_\h(x,\eta)+\Verderiv{_\eta} a_\h(x,\eta)$.
This implies the following useful Lipschitz estimate:
\[
\|A_{\h_1} -A_{\h_0}\|_{L^2\to L^2}\leq C h_0^{-1} |\h_1 -\h_0|
\quad\text{if $\h_0<\h_1$,}
\]
where $\|\Op_\h(b_\h)\|_{L^2\to L^2}\leq C<\infty$.
The assumption holds if $a_\h$ is classical and given as a Borel sum.
\end{remark}

In the following, we often suppress from writing the $\h$-dependence
of symbols, operators and distributions.
Moreover, when dealing with integrals like \eqref{def-Oph-a},
we move, without explicitly writing this, the $x$-dependency from the domain of integration
into the integrand using arguments as in the proof of the lemma.

Lebesgue measure $\intd v$ on $T_x X$ and Riemannian volume
are related by $\int f(y)\dvol_X(y)=\int f(\exp_x v)J_0(x,v)\intd v$, $y=\exp_x v$,
with Jacobian $J_0=1+\bigoh(|v|^2)$ at $v=0$.
Let $A=A_\h$ as in \eqref{def-Oph-a}.
The Schwartz kernel $K_A$ of $A$,
\[
Au(x)=\int_X K_A(x,y)u(y)\dvol_X(y), \quad K_A(x,y)\in\Hom(E_y,F_x).
\]
equals in a neighbourhood of the diagonal
a partial Fourier transform of the symbol,
\begin{equation}
\label{kernel-of-A}
K_A(x,y)= (2\pi\h)^{-n} \int \limits _{T^*_x}
     e^{-i\langle\eta,\exp_x^{-1} y\rangle /\h}
     a(x,\eta) \intd\eta \; \psi(x,y) \transport^{E}_{\tofrom{x}{y}}.
\end{equation}
Here $\psi(x,y)=\chi_0(x,v)/J_0(x,v)$, $y=\exp_x v$.
The symbol $a$ is recovered via the inverse Fourier transform:
\begin{equation}
\label{symbol-from-kernel}
a(x,\xi)\equiv \int \limits _{T_x} e^{i\langle\xi,v\rangle /\h}
      (\chi_0 J_0)(x,v) K_A(x,\exp_x v) \transport^{E}_{\tofrom{\exp_x v}{x}} \intd v
\end{equation}
modulo $\Sykm{-\infty}{-\infty}$.
The correspondence between an operator $A=\Op_\h(a)$ and its full symbol $a$,
named the geometric symbol of $A$, defines the complete symbol isomorphism
\[
\Pskm{k}{m}(X;E,F)/\Pskm{-\infty}{-\infty}
  \cong \Sykm{k}{m}(T^* X;\Hom(\pi^* E,\pi^* F))/\Sykm{-\infty}{-\infty}.
\]
The geometric symbol can also be computed by applying the operator to suitable testing functions as follows.
\begin{equation}
\label{testing-geom-symb}
a(x,\xi)s \equiv A_y\big(e^{i\langle\xi,\exp_x^{-1} y\rangle/\h}
               \chi_0(x,\exp_x^{-1} y) \transport^E_{\tofrom{y}{x}} s\big)\restrict{y=x}.
\end{equation}
Here $A_y$ means that $A$ acts on functions of the variable $y$.
In particular, in case $E=\C$, the geometric symbol is obtained at the center of
normal coordinates $x^j$ when $A$ is applied to $e^{i \xi_j x^j/\h}$ and evaluated at $x^j=0$.

We derive symbol properties and expansions 
using the method of stationary phase:
\begin{align*}
\big(\det (H/2\pi i\h)\big)^{1/2}
 & \int e^{i (\varphi(x))/\h} a(x)\intd x \\
 & = \exp\big(2^{-1}i\h\langle H^{-1} \partial,\partial\rangle\big)
           \big(e^{i\rho(x)/\h}a(x)\big)\Restrict{x=0} \\
 & = \sum_{j<3N} \frac{(i\h)^j}{j! 2^j} \langle H^{-1} \partial,\partial\rangle^j
           \big(e^{i\rho(x)/\h}a(x)\big)\Restrict{x=0} +\bigoh(\h^N),
\end{align*}
if $\varphi\in\Cinfty$, real-valued, $\varphi'(x)=0$ iff $x=0$,
$H=\varphi''(0)$ non-singular, $\varphi(0)=0$.
The remainder $\rho(x)=\varphi(x)-\langle Hx,x\rangle/2$ vanishes to third order at $x=0$.
The expansion has the advantage, when compared to that
obtained using the Morse lemma, of giving an efficient algorithm for computing the asymptotic series.
See \cite[Theorem~7.7.5]{hormander:83:the-analysis-1} where the
expansion is arranged in powers of $\omega^{-1}=\h$.

We are mainly interested in the leading symbols of operators.
We call the residue of $a$ in $\Sykm{k}{m}/\Sykm{k-2}{m-2}$
the leading symbol of an operator $\Op_\h(a)\in\Pskm{k}{m}$.
The principal symbol is, of course, the residue in $\Sykm{k}{m}/\Sykm{k-1}{m-1}$.

\begin{prop}
\label{geocalc-adjoint}
Let $A=\Op_\h(a)$ as in \eqref{def-Oph-a} with geometric symbol $a\in\Sykm{k}{m}$.
The formal adjoint $A^*\in\Pskm{k}{m}(X;F,E)$ has the geometric symbol
\begin{equation}
\label{geosymb-adjoint}
b\equiv a^* - i\h \trace \verderiv\horderiv a^* \mod \Sykm{k-2}{m-2}.
\end{equation}
If $a$ is classical then so is $b$.
\end{prop}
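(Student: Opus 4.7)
The approach is to express the Schwartz kernel of $A^*$ via $K_{A^*}(x,y)=K_A(y,x)^*$, apply the inverse-Fourier formula \eqref{symbol-from-kernel} to read off $b=\sigma_\h(A^*)$, and then extract the first two terms by stationary phase. Concretely, starting from \eqref{kernel-of-A} and using that the connection on $E$ is metric (so $\transport^E$ is unitary), one obtains
\[
K_{A^*}(x,y)=\psi(y,x)\,\transport^E_{\tofrom{x}{y}}\,(2\pi\h)^{-n}\int_{T^*_y}e^{i\langle\eta,\exp_y^{-1}x\rangle/\h}a^*(y,\eta)\intd\eta,
\]
and inserting this into \eqref{symbol-from-kernel} gives an oscillatory integral for $b(x,\xi)$ over $T_x\times T^*_y$ with amplitude built from $(\chi_0 J_0)(x,v)\psi(y,x)$ and parallel transports, and phase $\varphi=\langle\xi,v\rangle+\langle\eta,\exp_y^{-1}x\rangle$, where $y=\exp_x v$.

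To set up stationary phase, I would change variable $\eta=\transport^{T^*M}_{\tofrom{y}{x}}\zeta$ with $\zeta\in T^*_x$, so that the amplitude at $(v,\zeta)$ becomes $a^*\bigl(\exp_x v,\transport^{T^*M}_{\tofrom{y}{x}}\zeta\bigr)$ conjugated by parallel transports on $E,F$. Working in normal coordinates centered at $x$, the equation $\exp_y^{-1}x=0\iff v=0$ combined with $\partial_v(\exp_y^{-1}x)|_{v=0}=-\Id$ shows that the unique stationary point is $v=0$, $\zeta=\xi$, with Hessian of mixed type (identity block off the diagonal, zero blocks on the diagonal). The standard stationary phase identity cited in the excerpt then applies with signature zero and determinant one, yielding
\[
b(x,\xi)\equiv\exp\!\bigl(-i\h\,\langle\partial_v,\partial_\zeta\rangle\bigr)\Bigl[e^{i\rho/\h}\,\mathcal{A}(v,\zeta,\xi)\,a^*\bigl(\exp_x v,\transport^{T^*M}_{\tofrom{y}{x}}\zeta\bigr)\Bigr]\Big|_{v=0,\zeta=\xi}\ \mathrm{mod}\ \bigoh(\h^2),
\]
with $\mathcal{A}=1+\bigoh(|v|^2)$ collecting the geometric prefactors and $\rho$ the cubic remainder of $\varphi$.

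At order $\h^0$ this immediately gives $a^*(x,\xi)$. At order $\h^1$, the cubic $\rho$ and the $\bigoh(|v|^2)$ factors in $\mathcal{A}$ contribute only at $\h^2$ (because $\partial_v\partial_\zeta$ applied to them at $v=0$, $\zeta=\xi$ vanishes), so the whole first-order correction comes from $-i\h\,\partial_v\partial_\zeta$ acting on $a^*\bigl(\exp_x v,\transport^{T^*M}_{\tofrom{y}{x}}\zeta\bigr)$. Contracting the $T\otimes T^*$ indices produced by $\langle\partial_v,\partial_\zeta\rangle$ is exactly the trace, and by the very definitions \eqref{def-horderiv} and of $\verderiv$, the mixed derivative at $v=0$, $\zeta=\xi$ equals $\verderiv\horderiv a^*(x,\xi)$ (the parallel transports on $E,F$ entering $b$ match those in the definition of $\horderiv$). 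This yields $b\equiv a^*-i\h\trace\verderiv\horderiv a^*\bmod\Sykm{k-2}{m-2}$. Finally, if $a$ is classical then so is $a^*$ and hence also $\trace\verderiv\horderiv a^*$, so $b$ is classical.

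The main obstacle is the bookkeeping in the $\h^1$ term: one must verify that all the auxiliary geometric factors ($\psi$, the Jacobian $J_0$, and the two parallel transports appearing inside the oscillatory integral) have vanishing first-order $v$-derivatives at $v=0$, so that the only surviving contribution to $\partial_v\partial_\zeta$ comes from $a^*$ itself composed with $v\mapsto\exp_x v$ and with $\zeta\mapsto\transport^{T^*M}_{\tofrom{y}{x}}\zeta$. This is exactly where the choice of normal coordinates and the intrinsic definition of the horizontal derivative in \eqref{def-horderiv} conspire: the horizontal derivative was designed precisely so that $\partial_v\bigl(\transport^E_{\tofrom{x}{\exp_x v}}a^*(\exp_x v,\transport^{T^*M}_{\tofrom{\exp_x v}{x}}\xi)\transport^F_{\tofrom{\exp_x v}{x}}\bigr)\big|_{v=0}=\horderiv a^*(x,\xi)$, which is what the stationary phase computation produces.
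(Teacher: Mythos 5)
Your argument is correct and follows essentially the same route as the paper: pass to $K_{A^*}(x,y)=K_A(y,x)^*$, recover the symbol via \eqref{symbol-from-kernel}, apply stationary phase at the critical point $v=0$, $\zeta=\xi$ with all geometric prefactors of the form $1+\bigoh(|v|^2)$, and identify the order-$\h$ term with $-i\h\trace\verderiv\horderiv a^*$. The only (harmless) deviation is that you move $\eta\in T^*_y$ to $T^*_x$ by parallel transport instead of by ${}^t\exp_x'(v)$ as the paper does; since $\exp_y^{-1}x=-\transport^{TM}_{\tofrom{y}{x}}v$, this actually makes the phase exactly bilinear, so your cubic remainder $\rho$ vanishes identically (which also closes the small loose end in your justification that $\rho$ cannot feed back into the $\h^1$ coefficient through the $j\ge 2$ terms of the expansion) and the identification of the mixed derivative with $\verderiv\horderiv a^*$ becomes immediate from \eqref{def-horderiv}.
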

Notice that $\verderiv\horderiv a^*$ is a section of
$\pi^*(\Hom(F,E)\otimes T\otimes T^*)$.
The trace is taken of the $T\otimes T^*$ part.

\begin{proof}
The formal adjoint of $A$ is defined by
\[
\int_X \big(u_1(x)|Au_2(x)\big)_F\dvol_X(x) = \int_X \big(A^*u_1(y)|u_2(y)\big)_E\dvol_X(y).
\]
The Schwartz kernel satisfies $K_{A^*}(x,y) = {K_A(y,x)}^*$.
Recall that parallel transport preserves inner products.
It follows from \eqref{kernel-of-A} that
\[
K_{A^*}(x,y)= (2\pi\h)^{-n} \int \limits _{T^*_y}
     e^{i\langle\eta,\exp_y^{-1} x\rangle /\h}
      \transport^{E}_{\tofrom{x}{y}} a(y,\eta)^* \intd\eta \; \psi(y,x),
\]
and $K_{A^*}(x,y)=0$ if the distance between $x$ and $y$ is $>r$.
Set $y=\exp_x v$. Define $z\in T_y$ by $\exp_y z=x$.
After a linear change variables from $\eta\in T^*_y$ to $\zeta={}^t(\exp_x'(v))\eta\in T^*_x$
we have
\[
K_{A^*}(x,y)= (2\pi\h)^{-n} \int \limits _{T^*_x}
     e^{i\langle\eta, z \rangle /\h}
      \transport^{E}_{\tofrom{x}{y}} a(y,\eta)^* \intd\zeta \; \psi(y,x)/J_1(x,v),
\]
with Jacobian $J_1(x,v)=1+\bigoh(|v|^2)$.
Define
\[
b(x,\xi) = \int \limits _{T_x} e^{i\langle\xi,v\rangle /\h}
         (\chi_0J_0)(x,v) K_{A^*}(x,y) \transport^{F}_{\tofrom{y}{x}} \intd v.
\]
Inserting $K_{A^*}$ we have
\begin{equation}
\label{symb-b-of-Astar}
b(x,\xi) = (2\pi\h)^{-n} \int \limits _{T_x} \int \limits _{T^*_x}
	e^{i\varphi/\h} \tilde{a} J \intd \zeta \intd v,
\end{equation}
where
\begin{align*}
\varphi &= \langle\xi,v\rangle + \langle\eta, z \rangle
                = -\langle\zeta-\xi,v\rangle + \langle\zeta, \Phi \rangle, \\
\tilde{a} &= \transport^E_{\tofrom{x}{y}}
       a(y,\eta)^* \transport^F_{\tofrom{y}{x}}
       = \transport^{\Hom(F,E)}_{\tofrom{x}{y}} a(y,\eta)^*, \\
J &=\chi_0(x,v)J_0(x,v) \psi(y,x)/J_1(x,v)=1+\bigoh(|v|^2),
\end{align*}
and $\Phi=\Phi(x,v)={\exp_x'(v)}^{-1}z +v$.
A computation in normal coordinates centered at $x$ shows that 
$\Phi=\bigoh(|v|^3)$ as $v\to 0$.
If $\varphi_{\zeta}'=0$ then $z=0$, hence $v=0$.
It follows that the critical points of $\varphi$ are defined by $v=0$, $\zeta=\xi$.

Apply the method of stationary phase to \eqref{symb-b-of-Astar} and deduce $b\in\Sykm{k}{m}$.
Moreover, the following asymptotic expansion holds:
\begin{equation}
\label{expansion-adjoint}
b\sim \sum_j \frac{(i\h)^j}{j!} \langle -\partial_\zeta,\partial_v\rangle^j
        \big(e^{i\langle \zeta, \Phi\rangle /\h} \tilde{a}\big)\Restrict{v=0,\zeta=\xi}.
\end{equation}
Differentiation of the exponential factor brings out a non-zero factor
only if it comsumes at least three derivatives with respect to $v$
and at most one derivative with respect to $\zeta$.
It follows that the sum is asymptotic.
Moreover, $b$ is determined
modulo $\Sykm{k-2}{m-2}$ by the terms in the asymptotic sum with $j<2$,
$b\equiv a^* -i\h \langle \partial_\zeta,\partial_v\rangle \tilde{a}$.
Observe that
\[
\transport^{T}_{\tofrom{x}{\exp_x v}}\circ \exp_x'(v)=\Id_{T_x} +\bigoh(|v|^2)
\quad\text{as $v\to 0$.}
\]
It follows that $\partial_v\tilde{a}\Restrict{v=0}=\horderiv a^*(x,\zeta)$.
Hence $b\equiv a^* - i\h \trace \verderiv\horderiv a^*$.
The Schwartz kernels of $\Op_\h(b)$ and $A^*$ are equal in a neighbourhood of the diagonal.
Therefore $A^*-B\in\Pskm{-\infty}{-\infty}$. 
\end{proof}

\begin{prop}
\label{geocalc-compos}
Let $A\in\Pskm{k_A}{m_A}(X;F,G)$ and $B\in\Pskm{k_B}{m_B}(X;E,F)$
with geometric symbols $a$ and $b$, respectively.
Set $k=k_A+k_B$, $m=m_A+m_B$.
Then $AB\in\Pskm{k}{m}(X;E,G)$ with geometric symbol
\begin{equation}
\label{geo-symb-compos}
c\equiv ab - i\h \trace\big(\verderiv a.\horderiv b\big)
\end{equation}
modulo $\Sykm{k-2}{m-2}$.
If $a$ and $b$ are classical then so is $c$.
\end{prop}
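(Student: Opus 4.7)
The plan is to mimic the proof of Proposition~\ref{geocalc-adjoint}. First I would write the Schwartz kernel
\[
K_{AB}(x,x')=\int_X K_A(x,y)\,K_B(y,x')\dvol_X(y),
\]
insert~\eqref{kernel-of-A} for each factor, and use~\eqref{symbol-from-kernel} to extract $c(x,\xi)$. Setting $y=\exp_x u$, $x'=\exp_x v$, absorbing the Jacobian $J_0(x,u)$, and pulling the inner frequency variable $\vartheta\in T^*_y$ back to $\theta\in T^*_x$ by parallel transport, this becomes a $4n$-dimensional oscillatory integral with phase
\[
\varphi=\langle\xi,v\rangle-\langle\eta,u\rangle-\langle\theta,z(x,u,v)\rangle,
\]
where $z$ is as in~\eqref{def-diffeom-f}. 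By~\eqref{deriv-of-z-of-w} the quadratic part of $\varphi$ in $(u,w,\eta,\theta)$ with $w=v-u$ is $-\langle\eta-\xi,u\rangle-\langle\theta-\xi,w\rangle$, which has non-degenerate Hessian; the unique critical point is $u=w=0$, $\eta=\theta=\xi$.

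Stationary phase then yields
\[
c(x,\xi)\sim\sum_{N\geq 0}\frac{(i\h)^N}{N!}\bigl(\langle-\partial_\eta,\partial_u\rangle+\langle-\partial_\theta,\partial_w\rangle\bigr)^N\bigl(e^{i\rho/\h}\widetilde a\widetilde b\bigr)\Restrict{\mathrm{crit}},
\]
where $\rho$ is the third-order phase remainder, $\widetilde a=a(x,\eta)$, and $\widetilde b(x,u,w,\theta)$ combines $b$ with the parallel-transport conjugations and all remaining cutoffs and Jacobians. As in the expansion~\eqref{expansion-adjoint} for the adjoint, the sum is asymptotic because every differentiation of $e^{i\rho/\h}$ demands at least three vanishing factors at the critical point; classicality of $c$ follows because each term is produced by a finite $\h$-independent differential operator applied to $a$ and $b$. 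The $N=0$ term gives $ab$ since all parallel transports collapse to the identity at the diagonal. At order $N=1$, $\widetilde a$ is independent of $(u,w,\theta)$ and $\widetilde b$ is independent of $\eta$, so $\langle-\partial_\theta,\partial_w\rangle$ contributes $-\widetilde a\,\partial_\theta\partial_w\widetilde b\restrict{\mathrm{crit}}=0$ because $\widetilde b$ depends on $w$ only through factors that are $\mathrm{Id}+\bigoh(|w|^2)$ at $w=0$, leaving only $-i\h\,\trace(\partial_\eta\widetilde a\,.\,\partial_u\widetilde b)\restrict{\mathrm{crit}}$ at this order.

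The main obstacle, exactly as in the adjoint case, is to recognize $\partial_u\widetilde b\restrict{u=w=0,\theta=\xi}$ as $\horderiv b(x,\xi)$. The variable $u$ enters $\widetilde b$ through $b(\exp_x u,\cdot)$, through the parallel-transport conjugation $\transport^F_{\tofrom{x}{\exp_x u}}\circ b\circ\transport^E_{\tofrom{\exp_x u}{x}}$, and through the parallel-transport pull-back of $\theta\in T^*_x$ back to a covector at $y=\exp_x u$. Using $\transport^{T^*M}_{\tofrom{\exp_x u}{x}}\xi=\xi+\bigoh(|u|^2)$ from~\eqref{transport-coords}, the frequency pull-back contributes only quadratically in $u$, so it drops out of the linear term and $\partial_u\widetilde b\restrict{\mathrm{crit}}$ is exactly the derivative~\eqref{def-horderiv} defining $\horderiv b$. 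Combined with $\partial_\eta\widetilde a(x,\xi)=\verderiv a(x,\xi)$ and the $T_x\otimes T^*_x$ contraction supplied by $\langle-\partial_\eta,\partial_u\rangle$, this gives~\eqref{geo-symb-compos}.
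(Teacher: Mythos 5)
Your proposal is correct and follows essentially the same route as the paper's proof: compose the kernels, write $c(x,\xi)$ as a $4n$-dimensional oscillatory integral whose phase is the standard quadratic form plus a remainder vanishing to third order at the critical point $u=w=0$, $\eta=\theta=\xi$, apply stationary phase, and observe that at order $\h$ only $-i\h\trace\big(\verderiv a.\horderiv b\big)$ survives because the first derivatives of the holonomy factor, the cutoffs and the Jacobians vanish at the critical point. Your one deviation --- identifying the intermediate fibre $T^*_y$ with $T^*_x$ by parallel transport instead of the paper's substitution $\zeta={}^t(z_w')^{-1}\vartheta$ --- is harmless (the needed fact $\transport^{TX}_{\tofrom{x}{y}}z=w+\bigoh((|u|+|w|)^3)$ follows from the same normal-coordinate computations as \eqref{deriv-of-z-of-w}), but note that with this choice $\partial_u\tilde b$ at the critical point equals $\horderiv b(x,\xi)$ \emph{by the very definition} \eqref{def-horderiv}, in which the frequency pull-back is an essential ingredient; your claim that it ``drops out'' via \eqref{transport-coords} is therefore unnecessary and, read in general coordinates, inaccurate, since that transport carries a linear Christoffel term and is $\Id+\bigoh(|u|^2)$ only in normal coordinates centered at $x$.
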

Again the trace is taken of the $T\otimes T^*$ part,
and the dot terminates differentiated expression.
\begin{proof}
Setting $y=\exp_x v$, $C=AB$ is given by
\begin{align*}
Cu(x)= (2\pi\h)^{-2n} & \iiiint \limits _{T_x\times T^*_x\times T_y\times T^*_y}
     e^{-i(\langle\eta,v\rangle+\langle\zeta,z\rangle) /\h} a(x,\eta) \\
   &\phantom{==} \cdot \transport^F_{\tofrom{x}{y}}\big(b(y,\zeta)
           \transport^E_{\tofrom{y}{\exp_y z}} u(\exp_y z)\big)
           \intd z\intd\zeta\intd v\intd\eta.
\end{align*}
Here and in the following we do not write the cutoff factors.
Let $z=z(x,v,w)$ be the solution of $\exp_y z=\exp_x w$.
The symplectic change of variables
$(w,\vartheta)\mapsto (z,\zeta)$, $\zeta = {}^t(z_w')^{-1}\vartheta$,
preserves the volume form.
We get
$Cu(x)=\int_{T_x} K_C(x,\exp_x w)u(\exp_x w) J_0(x,w) \intd w$,
with Schwartz kernel
\begin{align*}
K_C (x, & \exp_x w) J_0(x,w) \\
  &= (2\pi\h)^{-2n} \int \limits _{T^*_x\times T_x\times T^*_x}
       e^{-i(\langle\eta,v\rangle+\langle \zeta,z\rangle) /\h}
         c_0 \intd(\vartheta,v,\eta) \; \transport^E_{\tofrom{x}{\exp_x w}},
\end{align*}
$c_0 = a(x,\eta) \transport^{\Hom(E,F)}_{\tofrom{x}{y}} b(y,\zeta) M(x,w,v)$.
Here $M(x,w,v)\in GL(E_x)$ denotes the parallel transport in $E$ along
the geodesic triangle $x\to \exp_x w \to \exp_x v \to x$.
It follows that the symbol of $C$ equals
\begin{equation}
\label{c-statphase}
c(x,\xi)
  = (2\pi\h)^{-2n} \int \limits _{T_x\times T^*_x\times T_x\times T^*_x}
    e^{i\varphi/\h} c_0 \intd(v,\eta,w,\vartheta),
\end{equation}
$\varphi =\langle\xi,w\rangle-\langle\eta,v\rangle-\langle \zeta,z\rangle$.
We introduce $w-v$ as a new variable, $w$.
Then \eqref{c-statphase} holds with
\begin{align*}
\varphi &= -\langle\eta-\xi,v\rangle-\langle\vartheta-\xi,w\rangle
                +\langle \vartheta, \Phi\rangle, \\
c_0 &= a(x,\eta) \transport^{\Hom(E,F)}_{\tofrom{x}{y}} b(y,\zeta) M(x,w+v,v),
\end{align*}
Here $\Phi=w- (z_w'(x,v,w+v))^{-1}z(x,v,w+v)\in T_x^*$.
By \eqref{deriv-of-z-of-w}, $\Phi$ vanishes to third order at $v=w=0$.
Clearly, $v=0=z$ at a critical point of $\varphi$.
It follows that $v=w=0$ and $\eta=\vartheta=\xi$ define the critical points.

Now apply the method of stationary phase to \eqref{c-statphase} and deduce
that $c\in\Sykm{k}{m}$ is a symbol which, moreover, has an asymptotic expansion
\begin{equation}
\label{expansion-compos}
c\sim \sum_j \frac{(-i\h)^j}{j!} \big(\langle \partial_\vartheta,\partial_{w}\rangle 
    +\langle \partial_\eta,\partial_v\rangle\big)^j
        \big(e^{i\langle \vartheta, \Phi\rangle /\h} c_0\big)\Restrict{v=w=0,\eta=\vartheta=\xi}.
\end{equation}
Using that $\Phi$ does not depend on $\eta$ and $\vartheta$, and vanishes to third order
at $v=w=0$, we infer that the summands with $j>1$ belong to $\Sykm{k-2}{m-2}$.
It follows that
\[
ab - i\h \langle \partial_\eta a, \partial_v \tilde{b} M\rangle
 -i\h a \langle \partial_\vartheta,\partial_{w}\rangle \tilde{b} M,
\]
evaluated at the critical point, is the leading symbol of $C$.
Here $\tilde{b}=\transport^{\Hom(E,F)}_{\tofrom{x}{y}} b(y,\zeta)$.
We have $\partial_{w}\tilde{b}=0$ at $v=w=0$.
This follows from $\zeta_w'=0$ which is a corollary of $z=w$ at $v=0$.
The derivatives of $M$ with respect to $v$ and $w$ vanish at $v=w=0$.
Using
$\transport^{T}_{\tofrom{x}{\exp_x v}}\circ z_w'=\Id_{T_x} +\bigoh(|v|^2)$
at $w=0$, we derive
\[
\partial_{v}\tilde{b}
  = \partial_{v}
       \transport^{\Hom(E,F)}_{\tofrom{x}{\exp_x v}} b(\exp_x v,{}^t(z_w')^{-1}\vartheta)
  = \horderiv b(x,\vartheta),
\]
at $v=w=0$.
Summarizing the computations, \eqref{geo-symb-compos} follows.
\end{proof}

\begin{remark}
The proofs of propositions~\ref{geocalc-adjoint} and \ref{geocalc-compos}
follow those in \cite{Sharaf04geo1,Sharaf05geo2} closely with only minor modifications.
Our derivation of the asymptotic expansions of the symbols of adjoints
and products may be somewhat shorter, however.
We differ in defining the adjoint with respect to the volume element
rather than by using half-densities.
Notice that the symbol expansions 
\eqref{expansion-adjoint} and \eqref{expansion-compos}
depend only on the given symbols and on the geometry.
In the formulas \eqref{geosymb-adjoint} and \eqref{geo-symb-compos},
we extracted the leading symbols.

For the purposes of the present paper it suffices to assume $X$ compact.
A symbol calculus on general (complete) Riemannian manifolds 
needs to take the injectivity radius into account
and handle mapping properties more explicitly.
\end{remark}

It is well-known that a \psdiff\ operator acting on half-densities
has an invariantly defined subprincipal symbol;
see \cite[Appendix]{SjoesZworski02Monodromy} for a proof in the semiclassical case.
We relate the subprincipal symbol to the leading geometric symbol.
Equip the half-density bundle $\halfdens\ueber X$ with the inner
product $(u|v)=u\cdot\bar{v}/\dvol_X$, where the operations on the right
are in the sense of densities.
The connection given by $\nabla^{\halfdens} \dvol_X^{1/2} =0$ is metric
with respect to the Hermitian structure of $\halfdens$.

\begin{cor}
\label{cor-subprinc-symbol}
Let $A\in\Pskm{k}{m}(X;\halfdens)$.
The leading symbol of $A$ equals that of the corresponding scalar operator
$\tilde{A}\in\Pskm{k}{m}(X)$ which is given by
$\tilde{A}u= \dvol_X^{-1/2} A(u\dvol_X^{1/2})$.
If the geometric symbol $a$ of $A$ is classical,
$a\sim\sum_{j\geq 0} \h^{j-k}a_j$, $a_j\in\Sym{m-j}$,
then $\h^{-k} a_0$ is the principal symbol of $A$, and
\[
\subprinc{a}= \h^{1-k}(a_1 + i \verderiv a_0.\horderiv a_0/2)
\]
is its subprincipal symbol.
\end{cor}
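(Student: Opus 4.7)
The plan is to reduce everything to a computation at the center of normal coordinates, where the geometric quantization and the standard Kohn--Nirenberg one are directly comparable, and then invoke the invariant definitions of principal and subprincipal symbol on half-densities. The first assertion is essentially tautological: by construction of the connection on $\halfdens$, the Riemannian half-density $\dvol_X^{1/2}$ is a global unit parallel section, so in the trivialization $\halfdens\cong\C$ every map $\transport^{\halfdens}_{\tofrom{\cdot}{\cdot}}$ becomes the identity. Substituting into~\eqref{def-Oph-a} identifies $A$ and $\tilde A$ at the symbol level, and hence their leading symbols coincide.

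For the remaining assertions, fix $x\in X$ and work in normal coordinates centered at $x$, so that the Christoffel symbols vanish at the origin and $\horderiv|_x=\partial_y|_x$, $\verderiv|_x=\partial_\theta|_x$. By Lemma~\ref{def-geom-Oph}, $\tilde A$ is locally a Kohn--Nirenberg operator with classical symbol $a^U_\h\sim\sum_j\h^{j-k}a^U_j$ and $a^U_\h\equiv a_\h$ modulo $\Sykm{k-2}{m-2}$ at $x$; in particular $a^U_0(x,\xi)=a_0(x,\xi)$ and $a^U_1(x,\xi)=a_1(x,\xi)$. The invariant principal and subprincipal symbols for a half-density operator in such a chart are
\[
\h^{-k}a^U_0(x,\xi),\qquad \h^{1-k}\Bigl(a^U_1(x,\xi)+\tfrac{i}{2}\sum_j\partial_{y_j}\partial_{\theta_j}a^U_0(y,\theta)\Big|_{y=x}\Bigr),
\]
which gives the principal-symbol claim immediately.

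For the subprincipal claim I would read off $a^U_0(v,\theta)=J_1(v,v)\,a_0(\exp_x v,\zeta(\theta,v,v))$ from the left-reduction carried out in the proof of Lemma~\ref{def-geom-Oph}. The Jacobian obeys $J_1(v,w)=1+\bigoh(|v|^2+|w|^2)$, and~\eqref{deriv-of-z-of-w} implies $\zeta(\theta,v,v)=\theta+\bigoh(|v|^2)$; hence $\partial_{v_j}J_1(v,v)$ and $\partial_{v_j}\zeta(\theta,v,v)$ both vanish at $v=0$. Since $\exp_xv=v$ identically in the chart, this yields $\partial_{y_j}\partial_{\theta_j}a^U_0(y,\theta)|_{y=x}=\partial_{x_j}\partial_{\xi_j}a_0(x,\xi)$, which in normal coordinates equals $\trace(\verderiv\horderiv a_0)(x,\xi)$. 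Combined with $a^U_1(x,\xi)=a_1(x,\xi)$, this reproduces the stated formula for $\subprinc{a}$. The main obstacle is technical rather than conceptual: the verification of the second-order expansions of $J_1$ and $\zeta$, which rests on~\eqref{deriv-of-z-of-w} together with the normal-coordinate forms of~\eqref{exp-coords} and~\eqref{transport-coords}, and which is precisely what ensures that the geometric subprincipal symbol matches the invariant one on the nose.
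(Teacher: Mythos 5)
Your proof is correct and follows essentially the same route as the paper: reduce to the center of normal coordinates, use Lemma~\ref{def-geom-Oph} to compare the geometric symbol with the local Kohn--Nirenberg symbol, and match the invariant definition of the subprincipal symbol by identifying the coordinate $x$-derivative with the horizontal derivative at the center. The only cosmetic differences are that the paper passes from $A$ to $\tilde A$ via the composition formula of Proposition~\ref{geocalc-compos} applied to the multiplication operator $\dvol_X^{1/2}$ (whose vertical and horizontal derivatives vanish) rather than by your direct substitution of the parallel section into \eqref{def-Oph-a}, and it obtains the mixed second-derivative term by invoking the chart-independence of the principal symbol $a_0$ on all of $U$ instead of your explicit expansion of $a^U_0$ from the left reduction.
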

\begin{proof}
Consider the multiplication operator $\dvol_X^{1/2}\in\Pskm{0}{0}(X;\C,\halfdens)$.
The $\Hom(\C,\halfdens)$-valued symbol $\pi^* \dvol_X^{1/2}$ is the
leading symbol of this operator.
Note that its horizontal and vertical derivatives vanish.
The equality of the leading symbols of $A$ and $\tilde{A}$
now follows from Proposition~\ref{geocalc-compos}.

Let $a^U$ denote the local symbol of $A$ in a geodesic
coordinate chart $U$ centered at a given point $x$.
We use normal coordinates centered at $x$.
Assume $a$ classical, $\h^k a= a_0+\h a_1+\bigoh(\h^2)$.
Then $a^U$ is classical, and $\h^k a^U= a_0+\bigoh(\h)$.
Moreover, it follows from Lemma~\ref{def-geom-Oph} that
$\h^k a^U= a_0+\h a_1+\bigoh(\h^2)$ at $x$.
The subprincipal symbol equals, by definition,
$\h^{1-k}(a_1 +2^{-1}i\sum_j \partial^2 a_0/\partial x_j \partial\xi_j)$.
The horizontal derivative in the $j$-th coordinate direction equals,
at $x$, the partial derivative with respect to $x_j$.
The formula for the subprincipal symbol follows.
\end{proof}


\def\cprime{$'$}

\end{document}